\thanks{2000 {\it Mathematics Subject Classification}.  37B40, 37D25, 37C40}
 \keywords{Topological entropy; Pesin set; hyperbolic rate;  weak shadowing  property}
\theoremstyle{plain}
\newtheorem{Thm}{Theorem}[section]
\newtheorem{Lem}[Thm]{Lemma}
\newtheorem{Prop}[Thm]{Proposition}
\theoremstyle{remark}
\newtheorem{Def}[Thm] {Definition}
\newtheorem{Rem}[Thm] {Remark}
\long\def\begcom#1\endcom{}
\newcommand{\Leb}{\operatorname{Leb}}
\newcommand{\diam}{\operatorname{diam}}
\newcommand{\orb}{\operatorname{orb}}
\newcommand{\length}{\operatorname{\length}}
\newcommand{\dist}{\operatorname{dist}}
\newcommand{\Int}{\operatorname{int}}
\def\supp{\operatorname{supp}}
\def\id{\operatorname{id}}
\def\length{\operatorname{length}}
\def\loc{\operatorname{loc}}
\def\top{\operatorname{top}}
\begin{document}

\title[ Saturated Sets for Nonuniformly  Hyperbolic Systems  ]
      { Saturated Sets for Nonuniformly  Hyperbolic Systems}

\author[Liang, Liao, Sun, Tian]
{Chao Liang$^{*}$, Gang Liao$^{\dag}$, Wenxiang Sun$^{\dag}$,
Xueting Tian$^{\ddag}$}

\thanks{$^{*}$ Applied Mathematical Department, The Central University of Finance and Economics,
Beijing 100081, China; Liang is supported by NNSFC(\# 10901167)}\email{chaol@cufe.edu.cn}

 \thanks{$^{\dag}$ School of Mathematical Sciences,
Peking University, Beijing 100871,
China; Sun is supported by NNSFC(\#
10231020) and Doctoral Education Foundation of China}\email{liaogang@math.pku.edu.cn}
\email{sunwx@math.pku.edu.cn}

 \thanks{$^{\ddag}$
Academy of Mathematics and Systems Science, Chinese Academy of
Sciences, Beijing 100190, China; Tian is supported by CAPES} \email{tianxt@amss.ac.cn}

\date{July, 2011}

\maketitle


\begin{abstract}
In this paper we prove that for  an ergodic hyperbolic measure
$\omega$ of a $C^{1+\alpha}$ diffeomorphism $f$ on a Riemannian
manifold $M$, there is an $\omega$-full measured set
$\widetilde{\Lambda}$ such that for every invariant probability
$\mu\in \mathcal{M}_{inv}(\widetilde{\Lambda},f)$, the metric
entropy of $\mu$ is equal to the topological entropy of saturated
set $G_{\mu}$ consisting of  generic  points of $\mu$:
$$h_\mu(f)=h_{\top}(f,G_{\mu}).$$
Moreover, for every nonempty, compact and connected subset $K$ of
$\mathcal{M}_{inv}(\widetilde{\Lambda},f)$ with the same hyperbolic
rate, we compute the topological entropy of saturated set $G_K$ of
$K$ by the following equality:
$$\inf\{h_\mu(f)\mid \mu\in K\}=h_{\top}(f,G_K).$$

In particular these results can be applied (i) to the nonuniformy
hyperbolic diffeomorphisms  described by Katok, (ii) to the robustly
transitive partially hyperbolic diffeomorphisms described by
~Ma{\~{n}}{\'{e}}, (iii) to the robustly transitive non-partially
hyperbolic diffeomorphisms described  by Bonatti-Viana. In all these
cases $\mathcal{M}_{inv}(\widetilde{\Lambda},f)$ contains an open
subset of $\mathcal{M}_{erg}(M,f)$.
\end{abstract}

\tableofcontents

\section{Introduction}

Let $(M, d)$ be  a compact metric space and $f : M\rightarrow M$ be
a continuous map. Given an invariant  subset $\Gamma\subset M$,
denote  by  $\mathcal{M}(\Gamma)$ the set consisting of all Borel
probability  measures,  by $\mathcal{M}_{inv}(\Gamma, f)$  the
subset consisting of $f$-invariant probability measures and, by
$\mathcal{M}_{erg}(\Gamma, f)$ the subset consisting of
$f$-invariant ergodic probability measures. Clearly, if $\Gamma$ is
compact then $\mathcal{M}(\Gamma)$ and $\mathcal{M}_{inv}(\Gamma,
f)$ are both compact spaces in the weak$^*$-topology of measures.
Given $x\in M$, define the $n$-ordered empirical measure
 of $x$ by
$$\mathcal{E}_n(x)=\frac1n\sum_{i=0}^{n-1}\delta_{f^i(x)},$$
where $\delta_y$ is the Dirac mass at $y\in M$. A subset $W\subset
M$ is called saturated if $x\in W$ and the sequence
$\{\mathcal{E}_n(y)\}$ has the same limit points set as
$\{\mathcal{E}_n(x)\}$ then $y\in W$. The limit point set $V(x)$ of
$\{\mathcal{E}_n(x)\}$ is always a compact connected subset  of $
\mathcal{M}_{inv}(M, f)$. Given $\mu\in \mathcal{M}_{inv}(M, f)$, we
collect the saturated set $G_\mu$ of $\mu$ by those generic points
$x$ satisfying $V(x)=\{\mu\}$. More generically, for a compact
connected subset $K\subset \mathcal{M}_{inv}(M, f)$, we denote by
$G_K$ the saturated set consisting  of points $x$ with $V(x)=K$. By
Birkhoff Ergodic Theorem, $\mu(G_{\mu})=1$ when $\mu$ is ergodic.
  However, this is  somewhat a special case.  For non-ergodic $\mu$, by Ergodic Decomposition Theorem,  $G_\mu$ has measure $0$ and
thus is `` thin " in view of measure.  In addition, when $f$ is
uniformly hyperbolic (\cite{Sigmund}) or non uniformly hyperbolic
(\cite{LST}), $G_\mu$ is of first category hence ``thin" in view of
topology.  Exactly, one can  get this topological fact of first
category as follows. Denote by $C^0(M)$ the set of continuous
real-valued functions on $M$ provided with the sup norm. For non
uniformly hyperbolic systems $(f,\mu)$,  there is $x\in M$ such that
$$\overline{\orb(x,f)}\supset \supp(\mu)\quad \mbox{and}\quad
\mathcal{E}_n(x)\quad \mbox{does not converge},$$ where the support
of a measure $\nu$, denoted by  $\supp(\nu)$, is the minimal closed
set with $\nu-$total measure, see \cite{Sigmund,LST}.  We can take
$0<a_1<a_2$ and $ \varphi\in C(M)$ such that
$$\liminf_{n\rightarrow +\infty} \frac1n\sum_{i=0}^{n-1}\varphi(f^i(x))<a_1<a_2<\limsup_{n\rightarrow +\infty}\frac1n\sum_{i=0}^{n-1}\varphi(f^i(x)).$$
Let $$R=\cap_N \cup_{n\geq N}\big{\{}x\mid
\frac1n\sum_{i=0}^{n-1}\varphi(f^i(x))<a_1\big{\}}\cap \cap_N
\cup_{n\geq N}\big{\{}x\mid
\frac1n\sum_{i=0}^{n-1}\varphi(f^i(x))>a_2\big{\}}.$$ Then $$R\cap
\overline{\orb(x,f)} \subset (\overline{\orb(x,f)}\setminus
G_{\mu})\,\,\mbox{ and} \,\, R\cap \overline{\orb(x,f)}\,\,\mbox{ is
a}\,\, G_{\delta}\,\,\mbox{ subset of}\,\, \overline{\orb(x,f)}.$$
Combining with $x\in \overline{\orb(x,f)}$, we can see that
$\overline{\orb(x,f)}\setminus G_{\mu}$ is a residual set of
$\overline{\orb(x,f)}$. Hence, $G_{\mu}$ is of first category in the
subspace $\overline{\orb(x,f)}$.

For a conservative system $(f,M,\Leb)$ preserving the normalized
volume measure $\Leb$, if $f$ is ergodic, then by ergodic theorem,
$$\mathcal{E}_n(x)\rightarrow \Leb,\quad \mbox{as}\quad n\rightarrow +\infty,$$
for $\Leb$-a.e. $x\in M$. In the general dissipative case where, a
priori, there is no distinguished invariant probability measures, it
is much more subtle what one should mean by describing the behavior
of  almost orbits in the physically  observable sense. In this
content, an invariant measure $\mu$ is called physical measure (or
Sinai-Ruelle-Bowen meaure) if the saturated set $G_{\mu}$ is of
positive Lebesgue measure. SRB measures are used to measure the
``thickness'' of saturated sets in view of $\Leb$-measure.

  Motivated by the definition of
saturated sets, it is reasonable to think that $G_{\mu}$ should put
together all information of
 $\mu$. If $\mu$ is ergodic,
Bowen\cite{Bowen4} has succeeded this motivation to prove that
$$h_{\top}(f,G_{\mu})=h_\mu(f).$$
When $f$ is mixing and uniformly hyperbolic (which implies uniform
specification property), applying \cite{PS} it also holds that
$$h_{\top}(f,G_{\mu})=h_\mu(f).$$ This implies that $G_\mu$ is
``thick" in view of topological entropy.  Indeed, the information of
invariant measure can be well approximated by nearby measures
\cite{Katok2, Wang, Liang,Liao-Sun-Tian}.  For non uniformly
hyperbolic systems, in \cite{LST} Liang, Sun and Tian proved
$G_{\mu}\neq \emptyset$. Our goal in the present paper is to show
the ``thickness" of $G_{\mu}$ in view of entropy.

Now we start to introduce our results precisely. Let $M$ be a
compact connected boundary-less Riemannian $d$-dimensional manifold
and $f : M \rightarrow M$ a $C^{1+\alpha}$ diffeomorphism.
 We
use $Df_x$ to denote the tangent map of $f$ at $x\in M$. We say that
$x\in M$ is a regular point of $f$ if there exist numbers
$\lambda_1(x)>\lambda_2(x)>\cdots>\lambda_{\phi(x)}(x)$ and a
decomposition on the tangent space
$$T_xM=E_1(x)\oplus\cdots\oplus E_{\phi(x)}(x)$$
such that$$\underset{n\rightarrow
\infty}{\lim}\frac{1}{n}\log\|(D_xf^n)u\|=\lambda_j(x)$$ for every
$0\neq u\in E_j(x)$ and every $1\leq j\leq \phi(x)$. The number
$\lambda_i(x)$ and the space $E_i(x)$ are called the Lyapunov
exponents and the eigenspaces of $f$ at the regular point $x$,
respectively.  Oseledets theorem \cite{Oseledec} states that all
regular points of a diffeomorphism $f: M\rightarrow M$ forms a Borel
set with total measure.  For a regular point $x\in M$ we define
$$\lambda^+(x)=\max\{0,\,\min\{\lambda_i(x)\mid
\lambda_i(x)>0,\,1\leq i\leq \phi(x)\}\} $$ and
$$\lambda^-(x)=\max\{0,\,\min\{-\lambda_i(x)\mid \lambda_i(x)<0,\,1\leq i\leq
\phi(x)\}\},
$$ We appoint $\min \emptyset=\max \emptyset =0$. Taking an ergodic invariant measure $\mu$, by the ergodicity for
$\mu$-almost all $x\in M$ we can obtain uniform exponents
$\lambda_i(x)=\lambda_i(\mu)$ for $1\leq i\leq \phi(\mu)$. In this
content we denote $\lambda^+(\mu)=\lambda^+(x)$ and
$\lambda^-(\mu)=\lambda^-(x)$. We say an ergodic measure $\mu$ is
hyperbolic if $\lambda^+(\mu)$ and $\lambda^-(\mu)$ are both
non-zero.

\begin{Def}\label{Def6} Given $\beta_1,\beta_2\gg\epsilon
>0$, and for all $k\in \mathbb{Z}^+$,  the hyperbolic block
$\Lambda_k=\Lambda_k(\beta_1,\beta_2;\,\epsilon)$ consists of all
points $x\in M$ for which there is a splitting $T_xM=E_x^s\oplus
E_x^u$ with the invariance property $Df^t(E_x^s)=E_{f^tx}^s$ and
$Df^t(E_x^u)=E_{f^tx}^u$, and satisfying:\\
 $(a)~
\|Df^n|E_{f^tx}^s\|\leq  e^{\epsilon k}e^{-(\beta_1-\epsilon)
n}e^{\epsilon|t|}, \forall t\in\mathbb{Z}, n\geq1;$\\
 $(b)~
\|Df^{-n}|E_{f^tx}^u\|\leq  e^{\epsilon k}e^{-(\beta_2-\epsilon)
n}e^{\epsilon|t|}, \forall t\in\mathbb{Z}, n\geq1;$ and\\
 $(c)~
\tan(Angle(E_{f^tx}^s,E_{f^tx}^u))\geq e^{-\epsilon
k}e^{-\epsilon|t|}, \forall t\in\mathbb{Z}.$
\end{Def}

\begin{Def}\label{Def7} $\Lambda(\beta_1,\beta_2;\epsilon)=\overset{+\infty}{\underset{k=1}{\cup}}
\Lambda_k(\beta_1,\beta_2;\epsilon)$ is a Pesin set.
\end{Def}

It is verified that $\Lambda(\beta_1,\beta_2;\epsilon)$ is an
$f$-invariant set but usually not compact. Although the definition
of Pesin set is adopted in a topology sense, it is indeed related to
invariant measures.  Actually,  given an ergodic hyperbolic measure
$\omega$ for $f$ if $\lambda^-(\omega)\geq  \beta_1$ and
$\lambda^+(\mu)\geq\beta_2$ then $\omega\in
\mathcal{M}_{inv}(\Lambda(\beta_1,\beta_2;\epsilon), f)$. From now
on we fix such a measure $\omega$ and  denote by
$\omega\mid_{\Lambda_l}$ the conditional measure of ¥ø$\omega$ on
$\Lambda_l$. Set
$\widetilde{\Lambda}_l=\supp(\omega\mid_{\Lambda_l})$ and
$\widetilde{\Lambda}=\cup_{l\geq1}\widetilde{\Lambda}_l$.   Clearly,
$f^{\pm}(\widetilde{\Lambda}_l)\subset \widetilde{\Lambda}_{l+1}$,
and the sub-bundles $E^s_x$, $E^u_x$ depend continuously on $x\in
\widetilde{\Lambda}_l$. Moreover, $\widetilde{\Lambda}$ is also
$f$-invariant with $\omega$-full measure \footnote{Here
$\widetilde{\Lambda}$ is obtained by taking support for each
hyperbolic block $\Lambda_l$ so even if an ergodic measure with
Lyapunov exponents away from $[-\beta_1,\beta_2]$ it is not
necessary of positive measure for $\widetilde{\Lambda}$. We will
give more discussions on $\widetilde{\Lambda}$ in section 6}.

\begin{Thm}\label{main theorem of measure} For every $\mu\in \mathcal{M}_{inv}(\widetilde{\Lambda},
f)$, we have
$$h_{\mu}(f)=h_{\top}(f,G_{\mu}).$$
\end{Thm}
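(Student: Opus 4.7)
The strategy is to establish matching upper and lower bounds. The upper bound $h_{\top}(f,G_\mu)\le h_\mu(f)$ is classical and due to Bowen \cite{Bowen4}: if every $x\in Y$ is a generic point for $\mu$ (that is, $V(x)=\{\mu\}$), then the Bowen topological entropy satisfies $h_{\top}(f,Y)\le h_\mu(f)$. Applied to $Y=G_\mu$, this needs no hyperbolicity and holds for every $\mu\in\mathcal{M}_{inv}(M,f)$, so the upper bound requires no further argument.

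The real content is the lower bound $h_{\top}(f,G_\mu)\ge h_\mu(f)$. I would first reduce to a finite ergodic approximation: by the ergodic decomposition theorem and the affinity of entropy, for any $\delta>0$ one can choose finitely many ergodic measures $\mu_1,\dots,\mu_N\in\mathcal{M}_{erg}(\widetilde{\Lambda},f)$ and weights $\alpha_i\ge 0$ with $\sum_i\alpha_i=1$ such that $\sum_i\alpha_i\mu_i$ approximates $\mu$ in the weak-$*$ topology and $\sum_i\alpha_i h_{\mu_i}(f)\ge h_\mu(f)-\delta$. Since each $\mu_i$ is ergodic and supported in $\widetilde{\Lambda}=\bigcup_l\widetilde{\Lambda}_l$, it assigns positive mass to some block $\widetilde{\Lambda}_{l_i}$; passing to a common block $\widetilde{\Lambda}_l$ and applying a Katok-type counting estimate (as in \cite{Katok2,Liang}), for each large $n_i$ there is an $(n_i,\vep)$-separated set $E_i\subset\widetilde{\Lambda}_l$ with $|E_i|\ge e^{n_i(h_{\mu_i}(f)-\delta)}$ and $\mathcal{E}_{n_i}(x)\approx\mu_i$ for every $x\in E_i$.

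The crucial construction is then a gluing/shadowing step. Using the weak shadowing/closing property available on $\widetilde{\Lambda}_l$ (the nonuniformly hyperbolic specification-type property, cf.\ \cite{LST,Liao-Sun-Tian}), I would concatenate segments drawn from $E_1,\dots,E_N$ in proportions $\alpha_1,\dots,\alpha_N$ to form a pseudo-orbit, and shadow it by a true orbit with error $\vep'$ depending only on $l$ and $\vep$. Iterating the construction over a lacunary sequence of scales, with the finite ergodic approximations of $\mu$ improving as the scale grows, produces a Cantor-like family of points whose empirical measures converge to $\mu$, and thus lie in $G_\mu$. Counting, each concatenation of total length $n$ gives $\prod_i |E_i|^{\alpha_i}\ge e^{n(\sum_i\alpha_i h_{\mu_i}(f)-\delta)}$ essentially distinct $(n,\vep'')$-separated orbits in $G_\mu$, so Bowen's definition of topological entropy yields $h_{\top}(f,G_\mu)\ge h_\mu(f)-2\delta$, and letting $\delta\to 0$ finishes the proof.

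The main obstacle is the gluing step. Shadowing constants and gap lengths depend on the Pesin block index $l$, so the construction must simultaneously (i) keep all concatenation endpoints inside a single $\widetilde{\Lambda}_l$---which is exactly why the support $\widetilde{\Lambda}_l=\supp(\omega|_{\Lambda_l})$ is the right set, since it carries positive-frequency recurrence for $\omega$-typical orbits; (ii) make the cumulative length of the gluing gaps negligible relative to the concatenated segment, so that the limit empirical measure is not perturbed; and (iii) ensure that distinct pseudo-orbits yield distinct shadowing points which remain $(n,\vep'')$-separated, preserving the cardinality lower bound. Executing these three items in the nonuniformly hyperbolic setting, where the shadowing constants degrade with $l$, is the technical core of the proof and leans on the shadowing machinery developed in \cite{LST,Liao-Sun-Tian}.
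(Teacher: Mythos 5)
Your proposal is correct and follows essentially the same route as the paper: the upper bound is quoted from Bowen/Pfister--Sullivan, and the lower bound is obtained by a rational-coefficient finite ergodic approximation of $\mu$, Katok-entropy separated sets inside a fixed Pesin block $\widetilde{\Lambda}_{l_k}$, concatenation of orbit segments over a lacunary sequence of scales, the weak shadowing lemma, and a counting argument for the resulting separated shadowing points. The three obstacles you single out (recurrence of endpoints to a common block, negligible gap lengths, and preservation of separation under shadowing) are exactly the points the paper handles via the quantitative Poincar\'e recurrence theorem, the choice of $N_k$, $T_k$, and the hyperbolicity of $Df$ in Lyapunov coordinates.
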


Let $\{\eta_l\}_{l=1}^{\infty}$ be a decreasing sequence which
approaches zero. As in \cite{Newhouse} we say a probability measure
$\mu\in \mathcal{M}_{inv}(M, f)$ has {\it hyperbolic rate}
$\{\eta_l\}$ with respect to the Pesin set
$\widetilde{\Lambda}=\cup_{l\geq 1}\widetilde{\Lambda}_l$ if
$\mu(\widetilde{\Lambda}_l)\geq 1-\eta _l$ for all $l\geq1$.
\begin{Thm}\label{main theorem of set}Let $\eta=\{\eta_l\}$ be a sequence decreasing to zero and
$\mathcal{M}(\widetilde{\Lambda},\eta)\subset\mathcal{M}_{inv}(M,f)
$ be the set of measures with hyperbolic rate $\eta$. Given any
nonempty compact connect   set $K\subset
\mathcal{M}(\widetilde{\Lambda},\eta)$, we have
$$\inf\{h_{\mu}(f)\mid \mu\in K\}=h_{\top}(f,G_K).$$

\end{Thm}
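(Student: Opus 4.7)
The equality has two directions, and I would treat them separately. For the \textbf{upper bound} $h_{\top}(f,G_K)\le \inf_{\mu\in K}h_\mu(f)$, I fix an arbitrary $\mu\in K$ and show $h_{\top}(f,G_K)\le h_\mu(f)$, then take the infimum. Since every $x\in G_K$ satisfies $V(x)=K\ni\mu$, there is a subsequence $n_j=n_j(x)$ with $\mathcal{E}_{n_j}(x)\to\mu$. Filtering points by the asymptotic visit frequency to a Pesin block $\widetilde{\Lambda}_l$ (which is near $1$ by the hyperbolic rate bound $\mu(\widetilde{\Lambda}_l)\ge 1-\eta_l$), a Brin--Katok/Katok-type local-entropy estimate along $\{n_j(x)\}$ produces a cover by Bowen balls of controlled $\mu$-measure, giving $h_{\top}(f,G_K)\le h_\mu(f)$ in the same style as the upper bound in Theorem~\ref{main theorem of measure}.

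For the \textbf{lower bound} $h_{\top}(f,G_K)\ge \inf_{\mu\in K}h_\mu(f)=:h^*$, the strategy is a concatenation scheme built on the weak shadowing property available on Pesin blocks. Choose a countable sequence $\{\mu_k\}_{k\ge 1}\subset K$ that is dense in $K$ and consecutive (i.e.\ $\mu_{k+1}$ weak$^*$-close to $\mu_k$), possible because $K$ is compact and connected. For each $k$ select $l_k$ so that $\mu_k(\widetilde{\Lambda}_{l_k})$ is near $1$, and apply Katok's entropy formula inside $\widetilde{\Lambda}_{l_k}$ to produce an $(n_k,\varepsilon_k)$-separated set $E_k\subset\widetilde{\Lambda}_{l_k}$ of $\mu_k$-typical orbit pieces with
$$|E_k|\ge \exp\bigl(n_k(h_{\mu_k}-\delta)\bigr)\ge \exp\bigl(n_k(h^*-\delta)\bigr).$$
Weak shadowing then produces, for each $(x_1,x_2,\ldots)\in\prod_k E_k$, a true orbit point $z$ whose trajectory tracks $x_k$ during the $k$-th block of $n_k$ iterates; distinct sequences give distinct $(N_k,\varepsilon)$-separated shadowing points, where $N_k:=\sum_{i\le k}n_i$.

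To guarantee $z\in G_K$, I arrange $n_{k+1}\gg N_k$, which forces $\mathcal{E}_{N_k}(z)$ to remain weak$^*$-close to $\mu_k$. Density of $\{\mu_k\}$ in $K$ then yields $K\subset V(z)$, and since consecutive $\mu_k,\mu_{k+1}$ are close and $K$ is connected, the intermediate empirical measures $\mathcal{E}_N(z)$ with $N_k\le N\le N_{k+1}$ stay near $K$, giving $V(z)\subset K$. Thus $V(z)=K$. Counting the resulting separated subset $F\subset G_K$ delivers $h_{\top}(f,F)\ge h^*-\delta$, and letting $\delta\downarrow 0$ closes the inequality.

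The \textbf{main obstacle} is the shadowing step. Nonuniformly hyperbolic shadowing normally operates inside a single Pesin block with constants degenerating as $l\to\infty$, yet successive pieces may come from distinct blocks $\widetilde{\Lambda}_{l_k}$. The common hyperbolic rate $\eta$ shared by all measures of $K$ is precisely what controls how fast $l_k$ can grow and thus enables a uniform weak-shadowing statement across blocks. Carefully matching $n_k$, $\varepsilon_k$ and the shadowing tolerances so that (i) the product of separated sets remains $(N_k,\varepsilon)$-separated after shadowing, (ii) the shadow registers empirical measures faithfully, and (iii) the limit set $V(z)$ is exactly $K$ rather than a proper sub- or super-set, is the principal bookkeeping burden of the proof.
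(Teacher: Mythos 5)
Your overall architecture for the lower bound (a dense consecutive sequence $\{\nu_k\}$ in $K$, separated sets of $\nu_k$-typical orbit segments, concatenation via the weak shadowing lemma, and control of $V(z)$ through the lengths of the blocks) is the same as the paper's, and your treatment of $V(z)=K$ is correct in outline. The upper bound needs none of the Pesin-block machinery you invoke: it is Proposition \ref{generical lemmas}, a purely topological consequence of Bowen's variational principle valid for any continuous map, so your Brin--Katok sketch there is superfluous though harmless.

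The genuine gap is in the counting step, and it is precisely the point where Theorem \ref{main theorem of set} differs from Theorem \ref{main theorem of measure}. You write that Katok's formula gives an $(n_k,\varepsilon_k)$-separated set $E_k$ with $\sharp E_k\geq\exp\bigl(n_k(h_{\mu_k}(f)-\delta)\bigr)$. If the scales $\varepsilon_k$ are allowed to shrink with $k$, the shadowing points you produce are separated at no fixed positive scale, so they yield no lower bound on $h_{\top}(G_K;\varepsilon)$ for any fixed $\varepsilon$; the whole construction must therefore run at a single scale $\varepsilon$. But at a fixed $\varepsilon$, Katok's formula only gives $\sharp E_k\geq\exp\bigl(n_k(h^{Kat}_{\nu_k}(f,\varepsilon\mid\delta)-\delta)\bigr)$, and the rate at which $h^{Kat}_{\nu}(f,\varepsilon\mid\delta)\to h_{\nu}(f)$ depends on $\nu$; since the measures $\nu_k$ change along the orbit of the single point $z$, you cannot send $\varepsilon\to0$ ``per $k$'' as one does for a single $\mu$ in Theorem \ref{main theorem of measure1}. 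What is needed is a single $\varepsilon$ with $h^{Kat}_{\nu}(f,\varepsilon\mid\delta)\geq h_{\nu}(f)-\gamma$ simultaneously for all $\nu\in\mathcal{M}(\widetilde{\Lambda},\eta)$. This is Lemma \ref{asymp entropy expansive} in the paper, and it is where the hyperbolic-rate hypothesis actually enters: via Newhouse's bound on local (tail) entropy over measures sharing the rate $\eta$, transferred to the Katok quantities through the uniform equivalence of entropy structures (Theorem \ref{entropy structure}, Boyle--Downarowicz/Downarowicz). Your proposal instead attributes the role of the common hyperbolic rate to the shadowing step; that is a misdiagnosis, since the weak shadowing lemma (Theorem \ref{specification}) already handles pseudo-orbits whose points move through different Pesin blocks, and exactly the same block-crossing issue is present in Theorem \ref{main theorem of measure1} where no rate hypothesis is imposed. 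Without the uniform Katok estimate your chain of inequalities cannot reach $\inf\{h_\mu(f)\mid\mu\in K\}-\gamma$. A smaller omission: for Bowen's non-compact entropy a large separated subset of $G_K$ does not by itself give a lower bound; one still needs the covering argument with variable-order Bowen balls that the paper carries out at the end of Section 4.
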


\section{Dynamics of non uniformly hyperbolic systems}\setlength{\parindent}{2em}
 We start with some notions and results of Pesin theory \cite{Barr-Pesin,Katok1,Pollicott}.

\subsection{Lyapunov metric}
Assume
$\Lambda(\beta_1,\beta_2;\,\epsilon)=\cup_{k\geq1}\Lambda_k(\beta_1,\beta_2;\,\epsilon)$
is a nonempty Pesin set. Let $\beta_1'=\beta_1-2\epsilon$,
$\beta_2'=\beta_2-2\epsilon$. Note that $\epsilon\ll
\beta_1,\beta_2$, then $\beta_1'>0, \beta_2'>0$.
 For $x\in \Lambda(\beta_1,\beta_2;\,\epsilon)$, we define
 $$\|v_s\|_s=\sum_{n=1}^{+\infty}e^{\beta_1'n}\|D_xf^n(v_s)\|, ~~~\forall~v_s\in E^s_x ,$$
 $$\|v_u\|_u=\sum_{n=1}^{+\infty}e^{\beta_2'n}\|D_xf^{-n}(v_u)\|, ~~~\forall~v_u\in E^u_x ,$$
$$\|v\|'=\mbox{max}(\|v_s\|_s,\,\|v_u\|_u)~~~\mbox{where}~v=v_s+v_u.$$
We call the norm $\|\cdot\|'$ a Lyapunov metric. This metric is in
general not equivalent to the Riemannian metric. With the Lyapunov
metric $f : \Lambda\rightarrow\Lambda$ is uniformly hyperbolic. The
following estimates are known :\\
 $(i) \|Df\mid_{E^s_x}\|'\leq e^{-\beta_1'},~~~ \|Df^{-1}\mid_{E^u_x}\|'\leq
 e^{-\beta_2'}$;\\
 $(ii) \frac{1}{\sqrt{2}}\|v\|_x\leq \|v\|_x'\leq \frac{2}{1-e^{-\epsilon}}e^{\epsilon k}\|v\|_x,~\forall~v\in T_x M,~x\in \Lambda_k.$
 \begin{Def}\label{Lyapunov coordinates}
In the local coordinate chart, a coordinate change $C_{\varepsilon}:
M\rightarrow GL(m,\mathbb{R})$ is called a Lyapunov change of
coordinates if for each regular point $x\in M$ and $u,v\in T_xM$, it
satisfies
$$<u,v>_x=<C_{\varepsilon}u,C_{\varepsilon}u>_x'.$$

 \end{Def}
By any Lyapunov change of coordinates $C_{\epsilon}$ sends the
orthogonal decomposition $\mathbb{R}^{\dim E^s}\oplus
\mathbb{R}^{\dim E^u} $ to the decomposition $E^s_x\oplus E^u_x$ of
$T_xM$. Additionally, denote
$A_{\epsilon}(x)=C_{\epsilon}(f(x))^{-1}Df_xC_{\epsilon}(x)$. Then
$$A_{\epsilon}(x)=\begin{pmatrix}A_{\epsilon}^s(x)&0\\0&A_{\epsilon}^u(x)\\\end{pmatrix},$$
$$\|A_{\epsilon}^s(x)\|\leq e^{-\beta_1'},\quad \|A_{\epsilon}^u(x)^{-1}\|\leq e^{-\beta_2'}.$$
\subsection{Lyapunov neighborhood}
Fix a point $x\in \Lambda(\beta_1,\beta_2;\,\epsilon)$. By taking
charts about $x, f(x)$  we can assume without loss of generality
that $x\in\mathbb{R}^d, f(x)\in \mathbb{R}^d$. For a sufficiently
small neighborhood $U$ of $x$, we can trivialize the tangent bundle
over $U$ by identifying $T_UM\equiv U\times \mathbb{R}^d$. For any
point $y\in U$ and tangent vector $v\in T_yM$ we can then use the
identification $T_UM=U\times \mathbb{R}^d$ to translate the vector
$v$ to a corresponding vector $\bar{v}\in T_xM$. We then define
$\|v\|''_y=\|\bar{v}\|'_x$, where $\|\cdot\|'$ indicates the
Lyapunov metric. This defines a new norm $\|\cdot\|''$ (which agrees
with $\|\cdot\|'$ on the fiber $T_xM$). Similarly, we can define
$\|\cdot\|''_z$ on $T_zM$ (for any $z$ in a sufficiently small
neighborhood of $fx$ or $f^{-1}x$). We write $\bar{v}$ as $v$
whenever there is no confusion. We can define a new splitting $T_yM
= {E^s_y}'\oplus {E^u_y}', y\in U$ by translating the splitting
$T_xM = E^s_x\oplus E^u_x$ (and similarly for $T_zM = {E^s_z}'\oplus
{E^u_z}'$).

There exist $\beta_1''=\beta_1-3\epsilon>0 ,
\beta_2''=\beta_2-3\epsilon>0$ and $\epsilon_0> 0 $ such that if we
set $\epsilon_k =\epsilon_0e^{-\epsilon k }$ then for any $y\in
B(x,\epsilon_k)$ in an $\epsilon_k$ neighborhood of $x\in\Lambda_k$.
We have a splitting
$T_yM = {E^s_y}'\oplus {E^u_y}'$ with hyperbolic behavior: \\
$(i)~ \|D_yf(v)\|''_{fy}\leq e^{-\beta_1''}\|v\|''¡¸$ for every
$v\in
{E^s_y}'$;\\
 $(ii)~ \|D_yf^{-1}(w)\|''_{f^{-1}y}\leq e^{-\beta_2''}\|w\|''¡¸$ for every $w\in
{E^u_y}'$.\\
The constant $\epsilon_0$ here and afterwards depends on various
global properties of $f$, e.g., the H$\ddot{o}$lder constants, the
size of the local trivialization, see p.73 in \cite{Pollicott}.
\begin{Def}\label{Def5} We define the Lyapunov neighborhood $\Pi=\Pi(x,a\epsilon_k)$
of $x\in \Lambda_k$ (with size $a\epsilon_k$, $0<a<1$) to be the
neighborhood of $x$ in $M$ which is the exponential projection onto
$M$ of the tangent rectangle
$(-a\epsilon_k,\,a\epsilon_k)E^s_x\oplus
(-a\epsilon_k,\,a\epsilon_k)E^u_x$.
\end{Def}  In the Lyapunov neighborhoods, $Df$ displays uniformly
hyperbolic in the Lyapunov metric. More precisely, one can extend
the definition of $C_{\epsilon}(x)$ to the Lyapunov neighborhood
$\Pi(x,a\epsilon_k)$ such that for any $y\in \Pi(x,a\epsilon_k)$,
$$A_{\epsilon}(y):=C_{\epsilon}(f(y))^{-1}Df_yC_{\epsilon}(y)= \begin{pmatrix}A_{\epsilon}^s(y)&0\\0&A_{\epsilon}^u(y)\\\end{pmatrix},$$
$$\|A_{\epsilon}^s(y)\|\leq e^{-\beta_1''},\quad \|A_{\epsilon}^u(y)^{-1}\|\leq e^{-\beta_2''}.$$
Let $\Psi_x=\exp_x\circ C_{\epsilon}(x)$. Given $x\in \Lambda_k$, we
say that the set $H^u\subset \Pi(x,a\epsilon_k)$ is an admissible
$(u,\gamma_0,k)$-manifold near $x$ if $H^u=\Psi_x(\mbox{graph
}\psi)$ for some $\gamma_0$-Lipschitz function $\psi:
(-a\epsilon_k,\,a\epsilon_k)E^u_x\rightarrow
(-a\epsilon_k,\,a\epsilon_k)E^s_x$ with $\|\psi\|\leq
a\epsilon_k/4$. Similarly, we can also define
$(s,\gamma_0,k)$-manifold near $x$. Through each point $y\in
\Pi(x,a\epsilon_k)$ we can take $(u,\gamma_0,k)$-admissible manifold
$H^u(y)\subset \Pi(x,a\epsilon_k)$ and $(s,\gamma_0,k)$-admissible
manifold $H^s(y)\subset \Pi(x,a\epsilon_k)$. Fixing $\gamma_0$ small
enough, we can assume that \\$(i)~ \|D_zf(v)\|''_{fz}\leq
e^{-\beta_1''+\epsilon}\|v\|''$ for every $v\in
T_zH^s(y), z\in H^s(y)$;\\
 $(ii)~ \|D_zf^{-1}(w)\|''_{f^{-1}z}\leq e^{-\beta_2''+\epsilon}\|w\|''¡¸$ for every $w\in
T_zH^u(y), z\in H^u(y)$.\\

For any regular point $x\in \Lambda$, define $k(x)=\min\{i\in
\mathbb{Z}\mid x\in \Lambda_i\}$. Using the local hyperbolicity
above, we can see that each connected component of $f(H^u(y))\cap
\Pi(fx,a\epsilon_{k(fx)})$ is an admissible
$(u,\gamma_0,k(fx))$-manifold; each connected component of
$f^{-1}(H^s(y))\cap \Pi(f^{-1}x,a\epsilon_{k(f^{-1}x)})$ is an
admissible $(s,\gamma_0,k(f^{-1}x))$-manifold.

\subsection{ Weak shadowing lemma}

 In this section, we state a weak shadowing
property for $C^{1+\alpha}$ non-uniformly hyperbolic systems, which
is needful in our proofs.

Let $(\delta_k)_{k=1}^{\infty}$ be a sequence of positive real
numbers.  Let $(x_n)_{n=-\infty}^{\infty}$ be a sequence of points
in $\Lambda=\Lambda(\beta_1,\beta_2,\epsilon)$ for which there
exists a sequence $(s_n)_{n=-\infty}^{+\infty}$ of positive integers
satisfying: \begin{eqnarray*}&(a)& x_n\in
\Lambda_{s_n},\,\,\forall n\in \mathbb{Z };\\[2mm]& (b)&  | s_n-s_{n-1} |\leq
1, \forall \,n\in \mathbb{Z};\\[2mm]
&(c)&  d(f(x_n), x_{n+1})\leq \delta_{s_n},\,\,\,\forall\, n\in
\mathbb{Z},\end{eqnarray*} then we call
$(x_n)_{n=-\infty}^{+\infty}$ a $(\delta_k)_{k=1}^{\infty}$
pseudo-orbit. Given $c>0$,
 a point $x\in M$ is an $\epsilon$-shadowing point for the
 $(\delta_k)_{k=1}^{\infty}$
pseudo-orbit if $d(f^n(x), x_{n})\leq
c\epsilon_{s_n}$,\,\,$\forall\,n\in \mathbb{Z}$,   where
$\epsilon_k=\epsilon_0e^{-\epsilon k}$ are given by the definition
of Lyapunov neighborhoods.
 \begin{Thm} (Weak shadowing lemma \cite{Hirayama,Katok1,Pollicott})\label{specification} Let $f: M\rightarrow M$ be a $C^{1+\alpha}$ diffeomorphism, with
a non-empty Pesin set $\Lambda=\Lambda(\beta_1,\beta_2;\epsilon)$
and fixed parameters, $\beta_1,\beta_2 \gg \epsilon > 0$. For $c >
0$ there exists a sequence $(\delta_k)_{k=1}^{\infty}$ such that for
any $(\delta_k)_{k=1}^{\infty}$ pseudo-orbit there exists a unique
$c$-shadowing point. \end{Thm}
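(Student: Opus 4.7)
The plan is to prove the weak shadowing lemma by a graph-transform/fixed-point argument carried out in Lyapunov coordinates, exploiting the uniform hyperbolicity of $Df$ on Lyapunov neighborhoods already established in Section 2.2. The shadowing point will be produced as the unique intersection of a local unstable manifold obtained by forward iteration of admissible $(u,\gamma_0,\cdot)$-manifolds and a local stable manifold obtained by backward iteration of admissible $(s,\gamma_0,\cdot)$-manifolds along the pseudo-orbit.

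First I would calibrate the sequence $\delta_k$. Since the Lyapunov neighborhood at $x_n\in\Lambda_{s_n}$ has size $a\epsilon_{s_n}=a\epsilon_0 e^{-\epsilon s_n}$, and $|s_n-s_{n-1}|\le 1$ gives $\epsilon_{s_{n+1}}\ge e^{-\epsilon}\epsilon_{s_n}$, I choose $\delta_k$ decaying fast enough in $k$ (e.g. $\delta_k=\delta_0 e^{-2\epsilon k}$ with $\delta_0$ small) so that the image $f(x_n)$ lies well inside $\Pi(x_{n+1},\tfrac{a}{4}\epsilon_{s_{n+1}})$ and, more importantly, so that if $\Sigma_{n+1}$ is an admissible $(u,\gamma_0,s_{n+1})$-manifold through a point $\delta$-close to $x_{n+1}$, then the graph-transformed piece $f(\Sigma_n)\cap \Pi(x_{n+1},a\epsilon_{s_{n+1}})$ is still an admissible $(u,\gamma_0,s_{n+1})$-manifold. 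The analogous condition for $(s,\gamma_0,\cdot)$-manifolds under $f^{-1}$ must hold simultaneously. These are just the standard admissibility-preservation estimates and follow from items (i)--(ii) at the end of Section 2.2 together with the Hölder control of $Df$ in Lyapunov coordinates.

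Next I would run the graph-transform iteration. In the chart $\Psi_{x_n}=\exp_{x_n}\circ C_\epsilon(x_n)$, let $\mathcal{U}_n$ be the space of $\gamma_0$-Lipschitz graphs of maps $(-a\epsilon_{s_n},a\epsilon_{s_n})E^u_{x_n}\to (-a\epsilon_{s_n},a\epsilon_{s_n})E^s_{x_n}$ with sup-norm $\le a\epsilon_{s_n}/4$. The map sending $\psi_n\in\mathcal{U}_n$ to the graph representing $f(\mathrm{graph}\,\psi_n)\cap\Pi(x_{n+1},a\epsilon_{s_{n+1}})$ is a contraction in the $C^0$ norm with rate bounded by $e^{-\beta_2''+\epsilon}$, by the $E^u$-expansion estimate. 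Iterating backwards along $(x_n)$ (i.e. starting from, say, the zero section far in the past and pushing forward) and passing to the limit gives, for each $n$, a unique admissible unstable manifold $W^u_n\subset \Pi(x_n,a\epsilon_{s_n})$ whose forward orbit stays in the Lyapunov tube around the pseudo-orbit; the symmetric construction using $f^{-1}$ gives $W^s_n$. The $\lambda$-lemma style transversality plus the gap estimate (c) from Definition~\ref{Def6} yields $W^u_n\pitchfork W^s_n$ at a single point $z_n$, and by construction $f(z_n)=z_{n+1}$. Setting $x:=z_0$ produces the desired shadowing orbit.

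Finally, the shadowing estimate $d(f^n(x),x_n)\le c\epsilon_{s_n}$ is obtained by tracking, at each step, how far the intersection point $z_n$ lies from $x_n$ inside the chart; this distance is controlled by the geometric series $\sum_{j}e^{-\min(\beta_1'',\beta_2'')j}\delta_{s_{n+j}}$ and can be made $\le c\epsilon_{s_n}$ by choosing $\delta_0$ sufficiently small relative to $c\epsilon_0$ and the contraction rates. Uniqueness follows because any two $c$-shadowing points lie in both the (unique) $W^u_0$ and $W^s_0$, which intersect in a single point thanks to the uniform transversality. The main obstacle I anticipate is bookkeeping the $k$-dependence: the chart sizes, the Hölder constants controlling the nonlinear error of the graph transform, and the step size $\delta_k$ must all be coordinated so that the contraction survives the shrinking of Lyapunov neighborhoods along the pseudo-orbit—this is where the hypothesis $|s_n-s_{n-1}|\le 1$ enters crucially, since it prevents abrupt drops in $\epsilon_{s_n}$ that would otherwise break the induction.
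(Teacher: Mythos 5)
The paper does not prove this statement at all---it is quoted as a known result with citations to Hirayama, Katok--Hasselblatt and Pollicott, and the proof in those references is exactly the graph-transform/invariant-manifold intersection argument in Lyapunov coordinates that you outline. Your sketch is a correct rendering of that standard argument (calibrating $\delta_k$ against $\epsilon_{s_n}$ using $|s_n-s_{n-1}|\le 1$, contraction of the graph transform on admissible manifolds, intersection of the limiting stable and unstable slices, and uniqueness from two-sided contraction), so it matches the approach the paper implicitly relies on.
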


\section{Entropy for non compact spaces}
In our settings the saturated sets are often non compact. In
\cite{Bowen4} Bowen gave the definition of topological entropy for
non compact spaces.  We state the definition in a slightly different
way and they are in fact equivalent.  Let $E\subset M$ and
$\mathcal{C}_n(E,\varepsilon)$ be the set of all finite or countable
covers of $E$ by the sets of form $B_m(x,\varepsilon)$ with $m\geq
n$. Denote
$$\mathcal{Y}(E; t,n,\varepsilon)=\inf\{ \sum_{B_m(x,\varepsilon)\in A}\,\,e^{-tm}\mid \,\,A\in \mathcal{C}_n(E,\varepsilon)\},$$
$$\mathcal{Y}(E; t,\varepsilon)=\lim_{n\rightarrow \infty}\gamma(E; t,n,\varepsilon).$$
Define $$h_{\top}(E;\varepsilon)=\inf \{t\mid\,\mathcal{Y}(E;
t,\varepsilon)=0\}=\sup \{t\mid\,\mathcal{Y}(E;
t,\varepsilon)=\infty\}$$
 and the topological entropy of $E$ is
 $$h_{\top}(E,f)=\lim_{\varepsilon\rightarrow 0} h_{\top}(E;\varepsilon).$$

The following formulas from \cite{PS}(Theorem 4.1(3)) are subcases
of Bowen's variational principle and true for general topological
setting.
\begin{Prop}\label{generical lemmas} Let $K\subset
\mathcal{M}_{inv}(M,f)$ be  non-empty, compact
 and connected. Then
 $$h_{\top}(f,G_K)\leq \inf\{h_\mu(f)\mid \mu\in K\}.$$
 In particular, taking $K=\{\mu\}$ one has
 $$h_{\top}(f,G_{\mu})\leq h_\mu(f).$$
\end{Prop}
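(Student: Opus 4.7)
The goal is the upper bound $h_{\top}(f,G_K)\leq \inf\{h_\mu(f):\mu\in K\}$. The plan rests on two observations: (i) $G_K$ sits inside the level set $Q_\mu:=\{x\in M:\mu\in V(x)\}$ for every $\mu\in K$; and (ii) the Bowen entropy of such a level set is controlled by $h_\mu(f)$ via a covering argument. Indeed, for $x\in G_K$ one has $V(x)=K\ni\mu$, so $G_K\subset Q_\mu$; by monotonicity of the Bowen entropy it suffices to prove
$$h_{\top}(f,Q_\mu)\leq h_\mu(f)\quad\text{for every } \mu\in\mathcal{M}_{inv}(M,f),$$
after which the infimum over $\mu\in K$ yields the proposition.

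To establish this I will fix $\mu$, $\delta>0$, $s>h_\mu(f)+3\delta$, and $\varepsilon>0$, and choose a finite Borel partition $\xi$ of $M$ with $\mu(\partial\xi)=0$ and $\diam(\xi)<\varepsilon/2$. Writing $\xi^m:=\bigvee_{i=0}^{m-1}f^{-i}\xi$ we have $\tfrac1mH_\mu(\xi^m)\to h_\mu(f,\xi)\leq h_\mu(f)$, so the pointwise Shannon--McMillan--Breiman theorem (applied to the ergodic components of $\mu$ and then integrated) provides, for each large $m$, a sub-family $\mathcal{A}_m\subset\xi^m$ with $\mu(\bigcup\mathcal{A}_m)>1-\delta$ and $\#\mathcal{A}_m\leq e^{m(h_\mu(f)+\delta)}$.

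Next, for each $x\in Q_\mu$ I pick $n_j=n_j(x)\to\infty$ with $\mathcal{E}_{n_j}(x)\to\mu$. Because $\mu(\partial\xi)=0$, the $\xi$-empirical distribution of the segment $x,fx,\ldots,f^{n_j-1}x$ converges to the $\mu$-distribution on $\xi$; a Stirling/type-counting argument then bounds the number of atoms of $\xi^m$ whose $\xi$-frequency vector is $\delta$-close to that of $\mu$ by $e^{m(h_\mu(f)+2\delta)}$, and $x$ lies in one of them for all large $m=n_j$. Each such atom is contained in a Bowen ball $B_m(y,\varepsilon)$, so for every $n$ the union over $m\geq n$ of these balls covers $Q_\mu$ and yields
$$\mathcal{Y}(Q_\mu;s,n,\varepsilon)\leq\sum_{m\geq n}e^{m(h_\mu(f)+2\delta)}e^{-sm}=\sum_{m\geq n}e^{-m(s-h_\mu(f)-2\delta)},$$
which tends to $0$ as $n\to\infty$. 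Hence $\mathcal{Y}(Q_\mu;s,\varepsilon)=0$, so $h_{\top}(Q_\mu;\varepsilon)\leq s$; letting $s\downarrow h_\mu(f)$ and $\varepsilon\to 0$ concludes.

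The principal obstacle is the non-ergodicity of $\mu$: the local quantity $-\tfrac1m\log\mu(\xi^m(x))$ need not be constant $\mu$-a.e., so pointwise Shannon--McMillan--Breiman does not directly yield the clean counting bound on $\mathcal{A}_m$. I would navigate this by integrating the ergodic decomposition $\mu=\int\mu_x\,d\mu(x)$, using $h_\mu(f)=\int h_{\mu_x}(f)\,d\mu(x)$, and restricting to a subset on which the local entropy is uniformly bounded above by $h_\mu(f)+\delta$. A cleaner alternative---and the route I would actually pursue---is to invoke the Brin--Katok local entropy formula, which holds for arbitrary invariant measures and permits one to cover $Q_\mu$ directly by Bowen balls of prescribed exponentially small $\mu$-mass, bypassing partitions altogether.
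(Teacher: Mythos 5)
Your overall architecture is the right one and matches the source of the cited result: the reduction $G_K\subset Q_\mu:=\{x:\mu\in V(x)\}$ for each $\mu\in K$, followed by $h_{\top}(f,Q_\mu)\leq h_\mu(f)$, is exactly how Bowen's Theorem 2 (and Pfister--Sullivan's Theorem 4.1(3), which the paper simply cites without proof) delivers the inequality. However, the step that carries all the weight is wrong as written. You bound the number of atoms of $\xi^m$ whose \emph{single-symbol} $\xi$-frequency vector is $\delta$-close to that of $\mu$ by $e^{m(h_\mu(f)+2\delta)}$. Stirling-type counting of sequences with prescribed one-block frequencies gives $e^{m(H_\mu(\xi)+o(1))}$, where $H_\mu(\xi)=-\sum_{A\in\xi}\mu(A)\log\mu(A)$ is the static entropy of the partition, not the dynamical entropy; in general $H_\mu(\xi)$ is much larger than $h_\mu(f)$ (e.g.\ any zero-entropy measure with nontrivial distribution on $\xi$), so the sum $\sum_{m\geq n}e^{-m(s-H_\mu(\xi))}$ need not converge for $s$ slightly above $h_\mu(f)$. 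The repair is Bowen's actual combinatorial lemma: fix $k$ large with $\frac1k H_\mu(\xi^k)\leq h_\mu(f,\xi)+\delta$, observe that weak$^*$ closeness of $\mathcal E_{n_j}(x)$ to $\mu$ (together with $\mu(\partial\xi)=0$) forces the empirical distribution of \emph{$k$-blocks} in the $\xi$-name of $x$ to be close to $\mu$'s distribution on $\xi^k$, and count names by their $k$-block type; this yields the bound $e^{m(h_\mu(f)+2\delta)}$ and the rest of your covering computation then goes through.

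A second, more conceptual defect is in your proposed fallbacks. Both the Shannon--McMillan--Breiman selection of $\mathcal A_m$ and the Brin--Katok formula are $\mu$-almost-everywhere statements, and they control only Bowen balls centered at $\mu$-typical points. The set $Q_\mu$ (and a fortiori $G_K$) need not carry any $\mu$-mass: the paper itself notes that $\mu(G_\mu)=0$ whenever $\mu$ is non-ergodic. So a family of atoms or Bowen balls covering a set of $\mu$-measure $1-\delta$ gives no covering of $Q_\mu$ at all, and the Brin--Katok route cannot ``bypass partitions.'' The only hypothesis available about a point of $Q_\mu$ is the convergence $\mathcal E_{n_j}(x)\to\mu$ along a subsequence, which is why the argument must be purely combinatorial (counting names compatible with empirical measures near $\mu$) rather than measure-theoretic.
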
By the above proposition, to prove
Theorem \ref{main theorem of measure} and Theorem \ref{main theorem
of set}, it suffices to show the following theorems.

\begin{Thm}\label{main theorem of measure1} For every $\mu\in \mathcal{M}_{inv}(\widetilde{\Lambda},
f)$, we have
$$h_{\top}(f,G_{\mu})\geq h_\mu(f).$$
\end{Thm}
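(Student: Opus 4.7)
The plan is to prove $h_\top(f,G_\mu) \geq h_\mu(f)$ by a Katok-Pfister-Sullivan style construction: given $h < h_\mu(f)$ and $\varepsilon > 0$, I would exhibit for arbitrarily large $N$ an $(N,\varepsilon)$-separated subset of $G_\mu$ of cardinality at least $e^{Nh}$. The three ingredients are ergodic decomposition combined with affinity of metric entropy, Katok's approximation of measure-theoretic entropy by $(n,\varepsilon)$-separated sets sitting inside a Pesin block, and the weak shadowing lemma (Theorem \ref{specification}) used to glue many orbit pieces into a single orbit whose empirical measures converge to $\mu$.

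The first step is reduction to finitely many ergodic approximants. Since $\mu\in\mathcal{M}_{inv}(\widetilde{\Lambda},f)$, almost every ergodic component of $\mu$ also lies in $\mathcal{M}_{inv}(\widetilde{\Lambda},f)$ and is hyperbolic. Using affinity of $\nu\mapsto h_\nu(f)$, for every $\eta>0$ I would pick ergodic $\nu_1,\dots,\nu_r\in\mathcal{M}_{erg}(\widetilde{\Lambda},f)$ and convex weights $\alpha_i$ such that $\bar\nu:=\sum_i\alpha_i\nu_i$ is within weak* distance $\eta$ of $\mu$ and $\sum_i\alpha_i h_{\nu_i}(f)>h$. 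Because $\widetilde{\Lambda}=\bigcup_l\widetilde{\Lambda}_l$ with $\nu_i(\widetilde{\Lambda})=1$, choosing a common large $l$ yields $\nu_i(\widetilde{\Lambda}_l)>1-\eta$ for each $i$. On $\widetilde{\Lambda}_l$ the dynamics is uniformly hyperbolic in the Lyapunov metric, so Katok's theorem furnishes, for each $i$ and each sufficiently large $n$, an $(n,\varepsilon)$-separated set $\Gamma_i^n\subset\widetilde{\Lambda}_l$ of cardinality at least $e^{n(h_{\nu_i}(f)-\eta)}$ whose points $x$ satisfy $\mathcal{E}_n(x)$ is $\eta$-close to $\nu_i$ and $f^n(x)\in\widetilde{\Lambda}_l$.

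Next I would set up the gluing. Pick rapidly growing epoch lengths $T_k$ and, inside the $k$-th epoch, concatenate $\lfloor\alpha_i T_k/n_k\rfloor$ consecutive pieces drawn from $\Gamma_i^{n_k}$ for each $i$ in a fixed cyclic order, where $n_k$ is chosen much smaller than $T_k$ but large enough that the Katok approximation has kicked in. Any such choice defines a $(\delta_k)$-pseudo-orbit whose base points all lie in $\widetilde{\Lambda}_l$, so the block index sequence is constant and Theorem \ref{specification} produces a unique $c$-shadowing point $x\in M$. Arranging $T_{k+1}/T_k\to\infty$ slowly forces the empirical measures $\mathcal{E}_{S_k}(x)$, with $S_k=T_1+\cdots+T_k$, to converge to $\bar\nu$; a standard interpolation between the scales $S_k$ and $S_{k+1}$ yields $V(x)=\{\mu\}$ after letting $\eta\to 0$, so every such shadowing point lies in $G_\mu$.

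The counting step finishes the proof: distinct segment choices produce distinct shadowing points, and since each is $c\epsilon_l$-close to its pseudo-orbit at every iterate, two points that already differ at some segment within a prefix of length $N$ are $(N,\varepsilon/2)$-separated once $c\epsilon_l\ll\varepsilon$. This gives at least $\prod_k\prod_i|\Gamma_i^{n_k}|^{\lfloor\alpha_i T_k/n_k\rfloor}$ such $(S_k,\varepsilon/2)$-separated points in $G_\mu$, hence $h_\top(f,G_\mu;\varepsilon/2)\geq\sum_i\alpha_i h_{\nu_i}(f)-\eta>h-\eta$. The main obstacle I anticipate is balancing three competing constraints: staying in a single block $\widetilde{\Lambda}_l$ so that the weak shadowing lemma applies, ensuring the empirical measures of the shadowing point converge to $\mu$ itself rather than merely to the approximant $\bar\nu$, and preserving $(N,\varepsilon/2)$-separation of shadowing points despite the Lyapunov-metric distortion $e^{\epsilon l}$ incurred on level $l$. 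Choosing $l$, $n_k$, and $T_k$ in the right order so that all three survive in the limit is the delicate bookkeeping.
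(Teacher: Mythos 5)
Your overall strategy (ergodic decomposition, Katok separated sets inside a Pesin block, weak shadowing to glue, then count) is the same as the paper's, but the plan as written has three genuine gaps. First, the concatenation step is not justified: to apply Theorem \ref{specification} you need condition (c) of a pseudo-orbit, i.e.\ $d(f(x_n),x_{n+1})\leq\delta_{s_n}$, so the endpoint $f^{n}(x)$ of each orbit segment must land within $\delta_{l}$ of the next base point. Katok's theorem alone gives you a large $(n,\varepsilon)$-separated set in $\widetilde{\Lambda}_l$, but says nothing about where $f^n(x)$ lands. The paper repairs this by (i) a quantitative Poincar\'e recurrence argument (Lemma \ref{Reccurence Thm}) forcing each selected point to return to its own element of a fine partition $\xi_k$ at some time $q\in[t,(1+\tfrac1k)t]$, (ii) a pigeonhole over the return time $q$ and over the partition elements, which costs only subexponential factors, and (iii) explicit transition orbits $y(m_{k_1,j_1},m_{k_2,j_2})$, obtained from ergodicity of $\omega$, to bridge between the separated sets attached to different ergodic components. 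Your ``fixed cyclic order'' concatenation silently assumes all of this.

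Second, fixing one approximant $\bar\nu$ and one block level $l$ and then ``letting $\eta\to0$'' does not produce points of $G_\mu$: for each fixed $\eta$ your shadowing points have $V(x)$ near $\bar\nu$, and $h_{\top}(f,G_\mu)$ is not controlled by the entropy of nearby saturated sets. You must nest the successively finer approximants $\mu_k\to\mu$ into a \emph{single} infinite pseudo-orbit (as the paper does with its epochs indexed by $k$), and since the corresponding block levels $l_k$ grow, the index sequence $(s_n)$ is not constant; the hypothesis $|s_n-s_{n-1}|\leq1$ and the shrinking shadowing scales $c\epsilon_{s_n}$ then have to be tracked, which is exactly where the paper's choice of $\operatorname{diam}\xi_k$, the constants $N_k,Y_k,T_k$ and inequalities (\ref{circle1})--(\ref{circle2}) enter. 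Third, your counting produces large separated sets only at the times $S_k=T_1+\cdots+T_k$ with $T_{k+1}/T_k\to\infty$; for Bowen's non-compact entropy (defined via covers by Bowen balls of varying orders) this is not enough, because a cover could use Bowen balls of order just above $S_k$ and pay only $e^{-tS_k}$ while the next separation scale is $S_{k+1}\gg S_k$. The paper avoids this by repeating each $Y_k$-cycle $T_k$ times so that separated sets occur at times $M_{k,i}$ with $M_{k,i+1}/M_{k,i}\to1$, and then runs the cover-replacement argument. None of these gaps is fatal to the strategy, but each requires a substantive additional construction rather than bookkeeping.
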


\begin{Thm}\label{main theorem of set1}Let $\eta=\{\eta_n\}$ be a sequence decreasing to zero and
$\mathcal{M}(\widetilde{\Lambda},\eta)\subset\mathcal{M}_{inv}(\widetilde{\Lambda},f)
$ be the set of measures with hyperbolic rate $\eta$. Given any
nonempty compactly connected set $K\subset
\mathcal{M}(\widetilde{\Lambda},\eta)$, we have
$$h_{\top}(f,G_K)\geq\inf\{h_{\mu}(f)\mid \mu\in K\}.$$

\end{Thm}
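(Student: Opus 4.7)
The plan is to combine the Katok approximation strategy underlying Theorem \ref{main theorem of measure1} with a Sigmund-type concatenation that produces orbits realising a prescribed compact connected set of limit measures. First I would choose a sequence $\{\mu_k\}_{k\ge 1}\subset K$ which is dense in $K$ and satisfies $d(\mu_k,\mu_{k+1})\to 0$ as $k\to\infty$, where $d$ metrises the weak-$*$ topology. Since $K$ is compact and connected, such a sequence is produced by covering $K$ by an increasing family of finite $1/k$-nets and joining each new net to the previous one by finite chains inside $K$ with step size $<1/k$.

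Fix $h<\inf\{h_\mu(f)\mid \mu\in K\}$ and small $\varepsilon,\delta>0$. Using the uniform hyperbolic rate $\eta$, $\mu_k(\widetilde{\Lambda}_l)\ge 1-\eta_l$ for every $l$ and every $k$, so a single Pesin block $\widetilde{\Lambda}_{l_0}$ can be chosen, uniformly in $k$, on which almost all mass of each ergodic decomposition of $\mu_k$ is concentrated. Decomposing each $\mu_k$ into ergodic components and using the affinity of metric entropy, I would replace $\mu_k$ by a finite convex combination $\sum_j \alpha_{k,j}\mu_{k,j}$ of ergodic hyperbolic measures in $\mathcal{M}_{erg}(\widetilde{\Lambda},f)$ that is close to $\mu_k$ and satisfies $\sum_j \alpha_{k,j}h_{\mu_{k,j}}(f)>h$. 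For each $\mu_{k,j}$ the Katok entropy formula, applied on $\widetilde{\Lambda}_{l_0}$, furnishes for large $n$ an $(n,\varepsilon)$-separated set $S_{k,j,n}\subset \widetilde{\Lambda}_{l_0}$ of cardinality $\ge e^{n(h_{\mu_{k,j}}(f)-\delta)}$ whose members $x$ satisfy $d(\mathcal{E}_n(x),\mu_{k,j})<1/k$.

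With the sequence $(\delta_k)$ provided by Theorem \ref{specification} fixed in advance, I would build, for each $k$, a window of length $N_k$ made by concatenating orbit pieces of lengths proportional to $\alpha_{k,j}N_k$ starting at points of the sets $S_{k,j,\cdot}$, with $N_k$ increasing fast enough that each new window dominates the previous Birkhoff average. Because all starting points lie in $\widetilde{\Lambda}_{l_0}$, the block-index sequence along the concatenated pseudo-orbit varies by at most $\pm 1$ at every step and returns to $l_0$ at each gluing, yielding a genuine $(\delta_k)$-pseudo-orbit. Theorem \ref{specification} then produces a unique $c$-shadowing point $z$, and the empirical measures $\mathcal{E}_{N_k}(z)$ track $\mu_k$ within arbitrarily small error; since $\{\mu_k\}$ is dense in $K$ and $d(\mu_k,\mu_{k+1})\to 0$, this forces $V(z)=K$, hence $z\in G_K$. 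Counting distinct choices of the building blocks $x\in S_{k,j,\cdot}$ and using a Bowen-ball separation argument along the shadowed orbit shows that the resulting shadowing points form an $(N,c'\varepsilon)$-separated subset of $G_K$ of cardinality $\ge e^{N(h-\delta')}$ for $N=N_k$ large; this gives $h_{\top}(f,G_K)\ge h$, and letting $h\nearrow \inf\{h_\mu(f)\mid \mu\in K\}$ together with $\delta'\to 0$ finishes the proof.

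The hard part will be the compatibility of the two rigid constraints imposed by the weak shadowing lemma: the pseudo-orbit must remain inside $\Lambda$ with block labels $s_n$ varying by at most one, while simultaneously approximating a \emph{moving} sequence of measures $\mu_k\in K$ whose ergodic components a priori live on very different blocks. The uniform hyperbolic rate $\eta$ on $K$ is precisely what unlocks this, because it allows a single basepoint block $\widetilde{\Lambda}_{l_0}$ to host all the Katok separated sets $S_{k,j,n}$; together with the invariance $f^{\pm 1}(\widetilde{\Lambda}_l)\subset \widetilde{\Lambda}_{l+1}$ this guarantees that the transitions between consecutive ergodic orbit pieces, and between consecutive windows, can always be carried out without violating the $|s_n-s_{n-1}|\le 1$ requirement. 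A secondary technical point will be verifying that the shadowing point inherits the empirical approximations with enough uniformity to read off $V(z)=K$, which is where the fast growth of $N_k$ and the precise quantitative form of Theorem \ref{specification} enter.
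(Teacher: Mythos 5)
Your overall architecture (a chain $\nu_k\in K$ with $D(\nu_k,\nu_{k+1})\to 0$ whose limit set is $K$, rational ergodic approximations, Katok separated sets, concatenated pseudo-orbits glued by the weak shadowing lemma, then a Bowen-ball count) is the same as the paper's. But there is a genuine gap at the single most delicate point of this theorem, and you have misplaced where the hyperbolic rate actually does its work. You fix $\varepsilon$ once and then assert that Katok's formula gives, for each ergodic component $\mu_{k,j}$, an $(n,\varepsilon)$-separated set of cardinality at least $e^{n(h_{\mu_{k,j}}(f)-\delta)}$. For a \emph{fixed} $\varepsilon$ this is false in general: Katok's theorem only yields cardinality $e^{n(h^{Kat}_{\mu_{k,j}}(f,\varepsilon\mid\delta)-\delta)}$, and $h^{Kat}_{\nu}(f,\varepsilon\mid\delta)$ recovers $h_{\nu}(f)$ only in the limit $\varepsilon\to 0$. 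Since $K$ contains infinitely many measures, your argument at scale $\varepsilon$ can only produce the lower bound $h_{\top}(f,G_K;\varepsilon)\geq \inf_{\nu\in K}h^{Kat}_{\nu}(f,\varepsilon\mid\delta)$, and because the infimum over $K$ and the limit $\varepsilon\to 0$ need not commute (one always has $\sup_{\varepsilon}\inf_{\nu}\leq \inf_{\nu}\sup_{\varepsilon}$, possibly strictly), you cannot conclude $h_{\top}(f,G_K)\geq\inf_{\nu\in K}h_{\nu}(f)$ without a \emph{uniform} rate of convergence of $h^{Kat}_{\nu}(f,\varepsilon\mid\delta)$ to $h_{\nu}(f)$ over $K$. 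Shrinking $\varepsilon$ with $k$ does not repair this, since the final count must be performed at a single separation scale to bound $h_{\top}(f,G_K;\varepsilon)$ from below.

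This uniformity is precisely the content of Lemma \ref{asymp entropy expansive}, which is where the hypothesis that $K\subset\mathcal{M}(\widetilde{\Lambda},\eta)$ with a common hyperbolic rate $\eta$ is really used: via Newhouse's bound on local entropy over measures with a fixed hyperbolic rate and the Boyle--Downarowicz equivalence of entropy structures (Theorem \ref{entropy structure}), one obtains a single $\varepsilon$ with $h_{\nu}(f)-h^{Kat}_{\nu}(f,\varepsilon\mid\delta)<\gamma$ for \emph{all} $\nu\in\mathcal{M}(\widetilde{\Lambda},\eta)$ simultaneously; only then does your count produce $e^{n(h_{\nu_k}(f)-\gamma-\cdots)}$ points. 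By contrast, the use you propose for the hyperbolic rate --- hosting all separated sets in a single Pesin block $\widetilde{\Lambda}_{l_0}$ --- is neither needed (the paper lets the block index $l_k$ grow with $k$, and the weak shadowing lemma \ref{specification} tolerates slowly varying indices) nor quite available as stated, since $\mu_k(\widetilde{\Lambda}_{l_0})>1-\delta$ does not imply the same bound for each individual ergodic component $m_{k,j}$. The remainder of your argument ($V(z)=K$ via the density and small-step property of the chain, and the separation count) matches the paper once the uniform entropy estimate is inserted.
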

\begin{Rem}Let $\mu\in \mathcal{M}_{inv}(M,f)$ and $K\subset
\mathcal{M}_{inv}(M,f)$ be a nonempty compactly connect set. In
\cite{PS}, C. E. Pfister and W. G. Sullivan proved that\\ (1) with
almost product property(for detailed definition, see \cite {PS}), it
holds that
$$h_{\top}(f,G_{\mu})= h_\mu(f);$$
(2) with  almost product property plus uniform separation(for
detailed definition, see \cite {PS}), it holds that
$$h_{\top}(f,G_K)=\inf\{h_{\mu}(f)\mid \mu\in K\}.
$$
However, for nonuniformly hyperbolic systems, the shadowing and
separation are inherent from the weak hyperbolicity of Lyapunov
neighborhoods which varies in the index $k$ of Pesin blocks
$\Lambda_k$, hence in general  almost product property and uniform
separation both fail to be true.
\end{Rem}

\section{Proofs of Theorem \ref{main theorem of measure} and Theorem \ref{main theorem of measure1}}

In this section, we will verify Theorem \ref{main theorem of measure1} and thus complete the proof of Theorem \ref{main theorem of measure} by Proposition \ref{generical lemmas}.

 For
each ergodic measure $\nu$, we use Katok's definition of metric
entropy( see \cite{Katok2}). For $x,y\in M$ and  $n\in \mathbb{N}$,
let
$$d^n(x,y)=\max_{0\leq i\leq n-1}d(f^i(x),\,f^i(y)).$$ For
$\varepsilon,\,\delta>0$, let $N_n(\varepsilon,\,\delta)$ be the
minimal number of $\varepsilon-$ Bowen balls $B_n(x,\,\varepsilon)$
in the $d^n-$metric, which cover a set of $\nu$-measure at least
$1-\delta$. We define
$$h_{\nu}^{Kat}(f,\varepsilon\mid \delta)=\limsup_{n\rightarrow\infty}\frac{\log
N_n(\varepsilon,\,\delta)}{n}.$$ It follows by Theorem 1.1 of
\cite{Katok2} that
$$h_{\nu}(f)=\lim_{\varepsilon\rightarrow0}h_{\nu}^{Kat}(f,\varepsilon\mid \delta).$$
Recall that $\mathcal{M}_{erg}(M,f)$ denote the set of all ergodic
$f-$invariant measures supported on $M$. Assume
$\mu=\int_{\mathcal{M}_{erg}(M,f)}d\tau(\nu)$ is the ergodic
decomposition of $\mu$ then by Jacobs Theorem
$$h_{\mu}(f)=\int_{\mathcal{M}_{erg}(M,f)}h_{\nu}(f)d\tau(\nu).$$
Define
$$h_{\mu}^{Kat}(f,\varepsilon\mid \delta)\triangleq \int_{\mathcal{M}_{erg}(M,f)}h_{\nu}^{Kat}(f,\varepsilon\mid \delta)d\tau(\nu).$$
By Monotone Convergence Theorem, we have
$$h_{\mu}(f)=\int_{\mathcal{M}_{erg}(M,f)}\lim_{\varepsilon\rightarrow0}h_{\nu}^{Kat}(f,\varepsilon\mid \delta)d\tau(\nu)
=\lim_{\varepsilon\rightarrow0}h_{\mu}^{Kat}(f,\varepsilon\mid
\delta).$$

\noindent{\bf Proof of Theorem \ref{main theorem of
measure1}}\,\,\,Assume $\{\varphi_i\}_{i=1}^{\infty}$ is the dense
subset of $C(M)$ giving the weak$^*$ topology, that is,
$$D(\mu,\,\nu)=\sum_{i=1}^{\infty}\frac{|\int \varphi_id\mu-\int \varphi_id\nu|}{2^{i+1}\|\varphi_i\|}$$
for  $\mu,\nu\in\mathcal{M}(M)$. It is easy to check the affine
property of $D$, i.e., for any $\mu,\,m_1,\,m_2\in\mathcal{M}(M)$
and $0\leq\theta\leq1$,
\begin{eqnarray*}
D(\mu,\,\theta m_1+(1-\theta)m_2 )\leq \theta D(\mu,
m_1)+(1-\theta)D(\mu, m_2).\end{eqnarray*} In addition,
$D(\mu,\nu)\leq 1$ for any $\mu,\nu\in \mathcal{M}(M)$.  For any
integer $k\geq1$ and $\varphi_1,\cdots,\varphi_k$, there exists
$b_k>0$ such that
\begin{eqnarray}\label{points approximation}
d(\varphi_j(x),\varphi_j(y))<\frac{1}{k}\|\varphi_j\|\,\,\,\mbox{for
any}\,\,d(x,y)<b_k,\,\,1\leq j\leq k.\end{eqnarray}
Now fix
$\varepsilon,\delta>0$.

\begin{Lem}\label{rational approximation}For any integer $k\geq1$ and invariant measure $\mu$, we can take a
finite convex combination of ergodic probability measures with
rational coefficients,
$$\mu_k=\sum_{j=1}^{p_k}a_{k,j}\,m_{k,j}$$
such that
\begin{eqnarray}\label{measures approximation}
D(\mu,\mu_k)<\frac{1}{k},\,\,\,m_{k,j}(\widetilde{\Lambda})=1\,\,\,\mbox{
and}\,\,\, |h_{\mu}^{Kat}(f,\varepsilon\mid
\delta)-h_{\mu_k}^{Kat}(f,\varepsilon\mid
\delta)|<\frac1k.\end{eqnarray}
\end{Lem}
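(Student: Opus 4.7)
The plan is to combine the ergodic decomposition of $\mu$ with a measurable partition of the space of its ergodic components, followed by a rational approximation of the weights. Writing $\mu=\int \nu\,d\tau(\nu)$, observe that since $\widetilde{\Lambda}$ is $f$-invariant with $\mu(\widetilde{\Lambda})=1$, ergodicity forces $\nu(\widetilde{\Lambda})\in\{0,1\}$ for $\tau$-a.e.\ component $\nu$, and integrating then yields $\nu(\widetilde{\Lambda})=1$ for $\tau$-a.e.\ $\nu$. Thus every ergodic measure selected below automatically satisfies the support condition $m_{k,j}(\widetilde{\Lambda})=1$.

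Next, fix a large integer $N\geq k$ (so that $2^{-N}<1/(4k)$) and consider the map
\[
\Phi:\mathcal{M}_{erg}(\widetilde{\Lambda},f)\to\RR^{N+1},\quad \nu\mapsto\left(h_\nu^{Kat}(f,\varepsilon\mid\delta),\,\int\varphi_1\,d\nu,\,\ldots,\,\int\varphi_N\,d\nu\right).
\]
The last $N$ coordinates are weak-$*$ continuous and the first is Borel measurable and bounded by $h_{\top}(f)$, so $\Phi$ is Borel with bounded image. I would partition the image into finitely many Borel cells of diameter at most some small $\eta>0$, pull them back to a partition $\{P_j\}_{j=1}^{p_k}$ of $\mathcal{M}_{erg}(\widetilde{\Lambda},f)$, and in each $P_j$ with $\tau_j:=\tau(P_j)>0$ choose a representative ergodic measure $m_{k,j}\in P_j$. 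Finally pick positive rationals $(a_{k,j})$ with $\sum_j a_{k,j}=1$ and $\sum_j|a_{k,j}-\tau_j|$ arbitrarily small, and set $\mu_k:=\sum_j a_{k,j}\,m_{k,j}$.

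For the verification, the affine property of $D$ combined with the partition-diameter bound controls the first $N$ terms of $D(\mu,\mu_k)$, while the tail $\sum_{i>N}2^{-i}\leq 2^{-N}$ handles the remainder; both can be forced below $1/(2k)$ by choosing $\eta$ and $N$ appropriately. For the entropy, the Jacobs-type identity $h_{\mu_k}^{Kat}=\sum_j a_{k,j}\,h_{m_{k,j}}^{Kat}$ combined with the decomposition
\[
|h_\mu^{Kat}-h_{\mu_k}^{Kat}|\leq \sum_j\int_{P_j}|h_\nu^{Kat}-h_{m_{k,j}}^{Kat}|\,d\tau + h_{\top}(f)\sum_j|\tau_j-a_{k,j}|
\]
bounds the left side by $\eta$ (from the cell diameter, since $h_\nu^{Kat}$ is one of the coordinates of $\Phi$) plus a term made arbitrarily small by the rational approximation. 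The principal technical point is the Borel measurability of $\nu\mapsto h_\nu^{Kat}(f,\varepsilon\mid\delta)$, which legitimizes both the cell partition and the Jacobs-type identity above; this in turn follows because for each fixed $n$ the minimal Bowen-ball covering number $N_n(\varepsilon,\delta)$ is a Borel function of $\nu$, so its normalized limsup $h_\nu^{Kat}$ inherits Borel measurability.
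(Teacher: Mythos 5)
Your proposal is correct and follows essentially the same route as the paper: ergodic decomposition of $\mu$, a finite measurable partition of $\mathcal{M}_{erg}(\widetilde{\Lambda},f)$ according to the approximate values of finitely many test-function integrals together with the Katok entropy $h_\nu^{Kat}(f,\varepsilon\mid\delta)$, a choice of one representative ergodic measure per cell, and a rational perturbation of the $\tau$-weights. Your version is in fact slightly more careful than the paper's on three points the paper leaves implicit --- the tail $\sum_{i>N}2^{-i}$ of the metric $D$, the deduction $m_{k,j}(\widetilde{\Lambda})=1$ from invariance of $\widetilde{\Lambda}$, and the Borel measurability of $\nu\mapsto h_\nu^{Kat}$ --- but these are refinements of the same argument, not a different one.
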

\begin{proof}
From the ergodic decomposition, we get
$$
\int_{\widetilde{\Lambda}} \varphi_i
d\mu=\int_{\mathcal{M}_{erg}(\widetilde{\Lambda}, f)}
\int_{\widetilde{\Lambda}} \varphi_i dm \,d\tau(m),\quad 1\leq i\leq
k.
$$
Use the definition of Lebesgue integral, we get the following steps.
First, we denote

 $$A_{+}:= \max_{1\leq i \leq
k}{\int_{\widetilde{\Lambda}} \varphi_i d m}+1, \,\,\,A_{-}:=
\min_{1\leq i \leq k}{\int_{\widetilde{\Lambda}} \varphi_i d m }-1$$
$$ F_{+}:=\sup_{\mathcal{M}_{erg}(\widetilde{\Lambda},f)}{h_{\mu}^{Kat}(f,\varepsilon\mid
\delta)}+1,\,\,\,
F_{-}:=\inf_{\mathcal{M}_{erg}(\widetilde{\Lambda},f)}{h_{\mu}^{Kat}(f,\varepsilon\mid
\delta)}-1.$$ It is easy to see that:
$$-\infty <A_{-}< A_{+}<+\infty,\,\,\,-\infty <F_{-}< F_{+}<+\infty.$$
For any integer $n>0$, let $$y_{0}=A_{-},\,\,
y_{j}-y_{j-1}=\frac{A_{+}-A_{-}}{n},\,\,y_{n}=A_{+},$$
$$F_{0}=F_{-},\,\,
F_{j}-F_{j-1}=\frac{F_{+}-F_{-}}{n},\,\,F_{n}=F_{+}.$$ We can take
$E_{i,j}$,$\,\,F_{s}\,\,$ to be  measurable partitions of
$\mathcal{M}_{erg}(\widetilde{\Lambda},f)\,\,$ as follows:
$$E_{i,j}=\{\mu\in \mathcal{M}_{erg}(\widetilde{\Lambda},f) \mid \quad y_{j}\leq \int_{\widetilde{\Lambda}} \varphi_i dm\leq y_{j+1}\}, $$
$$F_{n}=\{\mu\in \mathcal{M}_{erg}(\widetilde{\Lambda},f)\mid \quad  F_{j}\leq h_{\mu}^{Kat}(f,\varepsilon\mid
\delta) \leq F_{j+1}\}.$$  Noticing the fact that
$\bigcup_{j}E_{i,j}=\mathcal{M}_{erg}(\widetilde{\Lambda},f)$ and
$\bigcup_{j}F_{n}=\mathcal{M}_{erg}(\widetilde{\Lambda},f)$, we can
choose  a new partition $\,\xi\,$ defined as:
$$\xi=\bigwedge_{i,j}E_{i,j}\bigwedge_{s}F_{s},$$ where $\varsigma\bigwedge \zeta$
is given by $\{A\cap B\mid \,A\in \varsigma,\,\,B\in \zeta\}$.
For convenience, denote
$\xi=\{\xi_{k,1},\xi_{k,2},\cdots,\xi_{k,p_{k}}\}$.  To finish the
proof of Lemma \ref{rational approximation}, we can let $n$ large
enough such that any combination
$$\mu_k=\sum_{j=1}^{p_k}a_{k,j}\,m_{k,j}$$ where $m_{k,j}\in
\xi_{k,j}$, rational numbers $a_{k,j}>0$ with
$|a_{k,j}-\tau(\xi_{k,j})|<\frac{1}{2k}$,  satisfies:
\begin{eqnarray*}\label{measures approximation}
D(\mu,\mu_k)<\frac{1}{k},\,\,\,m_{k,j}(\widetilde{\Lambda})=1\,\,\,\mbox{
and}\,\,\, |h_{\mu}^{Kat}(f,\varepsilon\mid
\delta)-h_{\mu_k}^{Kat}(f,\varepsilon\mid \delta)|<\frac1k.
\end{eqnarray*}
\end{proof}
For each $k$, we can find $l_k$ such that
$m_{k,j}(\widetilde{\Lambda}_{l_k})>1-\delta$ for all $1\leq j\leq
p_k$. Recalling that $\epsilon_{l_k}$ is the scale of  Lyapunov
neighborhoods associated with the Pesin block $\Lambda_{l_k}$. For
any $x\in \Lambda_{l_k}$, $Df$ exhibits uniform hyperbolicity in
$B(x,\epsilon_{l_k})$. For $c=\frac{\varepsilon}{8\epsilon_0}$, by
Theorem \ref{specification} there is a sequence of numbers
$(\delta_k)_{k=1}^\infty$. Let $\xi_k$ be a finite partition of $M$
with $\mbox{diam}\,\xi_k
<\min\{\frac{b_k(1-e^{-\epsilon})}{4\sqrt{2}e^{(k+1)\epsilon}},\epsilon_{l_k},\delta_{l_k}\}$
and $\xi_k>\{\widetilde{\Lambda}_{l_k},M\setminus\widetilde{
\Lambda}_{l_k}\}$. Given $t\in \mathbb{N}$, consider the
set\begin{eqnarray*} \Lambda^t(m_{k,j})&=&\big{\{}x\in
\widetilde{\Lambda}_{l_k}\mid f^q(x)\in \xi_k(x)~ \mbox{for some} ~
q\in[t,[(1+{\frac1{k}})t]\\[2mm]
 &&\mbox{and}~
 D(\mathcal{E}_n(x),m_{k,j})<\frac1k\,\,\mbox{ for all}\,\,n\geq
 t\big{\}},
 \end{eqnarray*}
where $\xi_k(x)$ denotes the element in the partition $\xi_k$ which
contains the point $x$. Before going on the proof, we give the
following claim.
\smallskip

{\bf Claim}
$$m_{k,j}(\Lambda^t(m_{k,j}))\rightarrow m_{k,j}(\widetilde{\Lambda}_{l_k})\,\,\,\mbox{as}\,\,t\rightarrow +\infty.$$
\begin{proof}\,\,By ergodicity of $m_{k,j}$ and Birkhoff Ergodic Theorem, we know that for $m_{k,j}-a.e.x\in \widetilde{\Lambda}_{l_k}$, it holds $$\lim_{n\to\infty}\mathcal{E}_{n}(x)=m_{k,j}.$$ So we only need prove that the set $$ \Lambda^t_1(m_{k,j})=\big{\{}x\in
\widetilde{\Lambda}_{l_k}\mid f^q(x)\in \xi_k(x)~ \mbox{for some} ~
q\in[t,[(1+{\frac1{k}})t]\big{\}}$$ satisfying the property
 $$m_{k,j}(\Lambda^t_1(m_{k,j}))\rightarrow m_{k,j}(\widetilde{\Lambda}_{l_k})\,\,\,\mbox{as}\,\,t\rightarrow +\infty.$$

We next need the quantitative Poincar$\acute{e}$'s Recurrence Theorem (see Lemma 3.12 in \cite{Bochi} for more detail) as following.

\begin{Lem}\label{Reccurence Thm}
Let $f$ be a $C^1$ diffeomorphism preserving an invariant measure $\mu$ supported on $M$. Let $\Gamma\subset M$ be a measurable set with $\mu(\Gamma)> 0$ and let
$$\Omega =\cup_{n\in\mathbb{Z}}f^n(\Gamma).$$
Take $\gamma> 0$. Then there exists a measurable function $N_0 : \Omega\to \mathbb{N}$ such that for $a.e. x\in\Omega$,
every $n\geq N_0(x)$ and every $t\in [0, 1]$ there is some $l\in\{0, 1, . . .,n\}$ such that $f^l(x)\in\Gamma$
and $|(l/n)-t | < \gamma$.
\end{Lem}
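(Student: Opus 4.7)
\textbf{Proof plan for Lemma \ref{Reccurence Thm}.}
My plan is to combine the ergodic decomposition with a quantitative reading of Birkhoff's ergodic theorem. Because $\Omega$ is $f$-invariant, the ergodic decomposition $\mu=\int \mu_\omega\,d\tau(\omega)$ splits into components with $\mu_\omega(\Omega)\in\{0,1\}$. The components giving $\Omega$ measure zero form a $\mu$-null subset of $\Omega$ and may be ignored. For a component with $\mu_\omega(\Omega)=1$, since $\Omega=\bigcup_{n\in\ZZ}f^n(\Gamma)$ is a countable union of sets of equal $\mu_\omega$-measure, we must have $\rho_\omega:=\mu_\omega(\Gamma)>0$. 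It therefore suffices to prove the conclusion $\mu_\omega$-almost everywhere for each such ergodic component; the resulting $N_0$ is measurable and glues back to a measurable function on $\Omega$.

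Fix such an ergodic $\mu_\omega$ and apply Birkhoff's theorem to the indicator $1_\Gamma$: for $\mu_\omega$-a.e. $x$,
$$\frac{1}{m}\sum_{i=0}^{m-1}1_\Gamma(f^i(x))\longrightarrow \rho_\omega\quad\text{as }m\to\infty.$$
Set $\eta:=\rho_\omega\gamma/8$, let $N_1(x)$ be the least integer past which the Birkhoff averages of $1_\Gamma$ stay within $\eta$ of $\rho_\omega$, and put $N_0(x):=\lceil 4N_1(x)/(\rho_\omega\gamma)\rceil$. Both are measurable functions of $x$.

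Now fix $n\geq N_0(x)$ and $t\in[0,1]$. Put $a_n:=\max\{0,\lceil(t-\gamma)n\rceil\}$ and $b_n:=\min\{n,\lfloor(t+\gamma)n\rfloor\}$, so that every integer $l\in[a_n,b_n]$ satisfies $|l/n-t|<\gamma$ and $b_n-a_n\geq \gamma n-1$. The number of visits to $\Gamma$ at times $\{a_n,\ldots,b_n\}$ is the difference of the partial Birkhoff sums at times $b_n+1$ and $a_n$. If $a_n\geq N_1(x)$, then both sums are within $\eta m$ of $\rho_\omega m$ (where $m$ is the corresponding upper index), so the count is at least $\rho_\omega(b_n-a_n+1)-2\eta n\geq \rho_\omega\gamma n/2-\rho_\omega$, which is strictly positive for $n\geq N_0(x)$. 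If instead $a_n<N_1(x)$, the lower Birkhoff sum is trivially bounded above by $N_1(x)$, while $b_n+1\geq \gamma n\geq N_1(x)$ bounds the upper one below by $(\rho_\omega-\eta)\gamma n$; the difference is positive for the same reason. In either case there exists $l\in\{a_n,\ldots,b_n\}$ with $f^l(x)\in\Gamma$, as desired.

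The sole obstacle is the uniformity in $t$: Birkhoff's theorem gives pointwise convergence at each time scale, whereas the window endpoints $a_n,b_n$ move with $t$. This is circumvented by the case split on $a_n$ versus $N_1(x)$, together with the fact that the window has length at least $\gamma n$, which for $n\geq N_0(x)$ is large enough to swallow the Birkhoff error term.
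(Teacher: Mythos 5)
Your proof is correct, and since the paper itself gives no argument for this lemma (it simply cites Lemma 3.12 of Bochi), your Birkhoff-plus-ergodic-decomposition argument is essentially the standard one behind the cited result: reduce to an ergodic component where $\mu_\omega(\Gamma)=\rho_\omega>0$, and beat the Birkhoff error over a window of length at least $\gamma n$ by taking $n$ large compared with the a.e.\ convergence time $N_1(x)$, with the case split $a_n\geq N_1(x)$ versus $a_n<N_1(x)$ handling the lack of uniformity at small times. The only blemish is at the window endpoints, where $\lceil (t-\gamma)n\rceil$ or $\lfloor (t+\gamma)n\rfloor$ may give $|l/n-t|=\gamma$ rather than $<\gamma$; this is harmless because your visit count in the window is at least $5\rho_\omega\gamma n/8>2$ for $n\geq N_0(x)$, so a visit survives after discarding the two endpoints (or one simply runs the argument with $\gamma/2$).
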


\begin{Rem}
A slight modify (More precisely, replacing the interval $(n(t-\gamma),n(t+\gamma))$ by $(n(t,n(t+\gamma))$), one can require that $(l/n)-t<\gamma$ in the above lemma. Hence we have $l\in [n,n(t+\gamma)]$.
\end{Rem}

Take an element $\xi^l_k$ of the partition $\xi_k$. Let
$\Gamma=\xi^l_k$, $\gamma=\frac1k$. Applying Lemma \ref{Reccurence
Thm} and its remark, we can deduce that for $a.e.x\in\xi^l_k$, there
exists a measurable function $N_0$ such that for every $t\geq
N_0(x)$ there is some $q\in\{0, 1, \cdots ,n\}$ such that
$f^q(x)\in\xi^l_k=\xi_k(x)$ and $q\in [t,t(1+\frac1k)]$. That is to
say, $t\geq N_0(x)$ implies $x\in\Lambda^t_1(m_{k,j})$. And this
property holds for $a.e.x\in\xi^l_k$. Hence it is true for
$a.e.x\in\Lambda^l_k$. This completes the proof of the claim.

\end{proof}
\smallskip

Now we continue our proof of Theorem \ref{main theorem of measure1}. By above claim, we can take $t_k$ such that
$$m_{k,j}(\Lambda^{t}(m_{k,j}))>1-\delta$$ for all $t\geq t_k$ and
 $1\leq j\leq p_k$.

 Let $E_t(k,j)\subset \Lambda^{t}(m_{k,j})$ be a $(t,\varepsilon)$-separated set of
maximal cardinality. Then $\Lambda^{t}(m_{k,j})\subset \cup_{x\in
E_t(k,j)}B_{t}(x,\varepsilon) $, and by the definition of Katok's
entropy there exist infinitely many $t$ satisfying
$$\sharp\,E_t(k,j)\geq e^{t(h_{m_{k,j}}^{Kat}(f,\varepsilon\mid \delta)-\frac1k)}.$$
For each $q\in [t,[(1+{\frac1{k}})t]$, let
$$V_q=\{x\in E_t(k,j) \mid f^q(x)\in \xi_k(x)\}$$ and let $n=n(k,j)$ be
the value of $q$ which maximizes $\sharp\,V_q$. Obviously, \begin{eqnarray}\label{n t} t\geq
\frac{n}{1+\frac1k}\geq n(1-\frac{1}{k}).\end{eqnarray} Since
$e^{\frac{t}{k}}>\frac{t}{k}$, we deduce that
$$\sharp\, V_n\geq \frac{\sharp\,E_t(k,j)}{\frac{t}{k}}\geq e^{t(h_{m_{k,j}}^{Kat}(f,\varepsilon\mid \delta)-\frac3k)}.$$
Consider the element $A_n(m_{k,j})\in \xi_k$ for which
$\sharp\,(V_n\cap A_n(m_{k,j})) $ is maximal. It follows that
$$\sharp\,(V_n\cap A_n(m_{k,j}))\geq \frac{1}{\sharp\,\xi_k}\sharp\,V_n\geq
\frac{1}{\sharp\,\xi_k}e^{t(h_{m_{k,j}}^{Kat}(f,\varepsilon\mid
\delta)-\frac3k)}.$$ Thus taking $t$ large enough so that
$e^{\frac{t}{k}}>\sharp\,\xi_k$, we have by inequality (\ref{n t}) that
\begin{eqnarray}\label{count}\sharp\,(V_n\cap A_n(m_{k,j}))\geq e^{t(h_{m_{k,j}}^{Kat}(f,\varepsilon\mid
\delta)-\frac4k)}\geq
e^{n(1-\frac1k)(h_{m_{k,j}}^{Kat}(f,\varepsilon\mid
\delta)-\frac4k)}.\end{eqnarray}

Notice that  $A_{n(k,j)}(m_{k,j})$ is contained in an open subset
$U(k,j)$ of some Lyapunov neighborhood with
$\diam(U(k,j))<2\diam(\xi_k)$. By the ergodicity of $\omega$, for
any two measures $m_{k_1,j_1}, m_{k_2,j_2}$  we can find
$y=y(m_{k_1,j_1},m_{k_2,j_2})\in
U(k_1,j_1)\cap\widetilde{\Lambda}_{l_{k_1}}$ satisfying that for
some $s=s(m_{k_1,j_1},m_{k_2,j_2})$ one has
$$f^{s}(y)\in U(k_2,j_2)\cap\widetilde{\Lambda}_{l_{k_2}}.$$
Letting $C_{k,j}=\frac{a_{k,j}}{n(k,j)}$,  we can choose an integer
$N_k$ larger enough so that $N_kC_{k,j}$ are integers and
$$N_k\geq k\sum_{1\leq r_1,r_2\leq k+1, 1\leq j_i\leq p_{r_i},i=1,2}s(m_{r_1,j_1},m_{r_2,j_2}).$$
 Arbitrarily  take $x(k,j)\in A_n(m_{k,j})\cap
V_{n(k,j)}$. Denote  sequences
\begin{eqnarray*}
X_k&=&\sum_{j=1}^{p_k-1}s(m_{k,j},m_{k,j+1})+s(m_{k,p_k},m_{k,1})\\[2mm]
Y_k&=&\sum_{j=1}^{p_k}N_kn(k,j)C_{k,j}+X_k=N_k+X_k.\end{eqnarray*}
So,
\begin{eqnarray}\label{small bridge}\frac{N_k}{Y_k}\geq
\frac{1}{1+\frac1k}\geq 1-\frac1k.\end{eqnarray}
  We
further choose a strictly increasing sequence $\{T_k\}$ with $T_k\in
\mathbb{N}$,
\begin{eqnarray}\label{circle1}Y_{k+1}&\leq& \frac{1}{k+1} \sum
_{r=1}^{k}Y_rT_r,\\[2mm]
\label{circle2}\sum_{r=1}^{k}(Y_rT_r+s(m_{r,1},m_{r+1,1}))&\leq&
\frac{1}{k+1}Y_{k+1}T_{k+1}.\end{eqnarray}

In order to obtain shadowing points $z$ with our desired property
$\mathcal{E}_n(z)\rightarrow \mu$ as $n\rightarrow +\infty$,  we
first construct pseudo-orbits with satisfactory property in the
measure theoretic sense.
 For simplicity of the statement, for $x\in M$ define segments of orbits
\begin{eqnarray*}L_{k,j}(x)&\triangleq&(x,f(x),\cdots,
f^{n(k,j)-1}(x)),\,\,\,1\leq j\leq p_k,\\[2mm]
\widehat{L}_{k_1,j_1;k_2,j_2}(x)&\triangleq&(x,f(x)\cdots,
f^{s(m_{k_1,j_1},m_{k_2,j_2})-1}(x)),\,\,1\leq j_i\leq
p_{k_i},i=1,2.\end{eqnarray*}
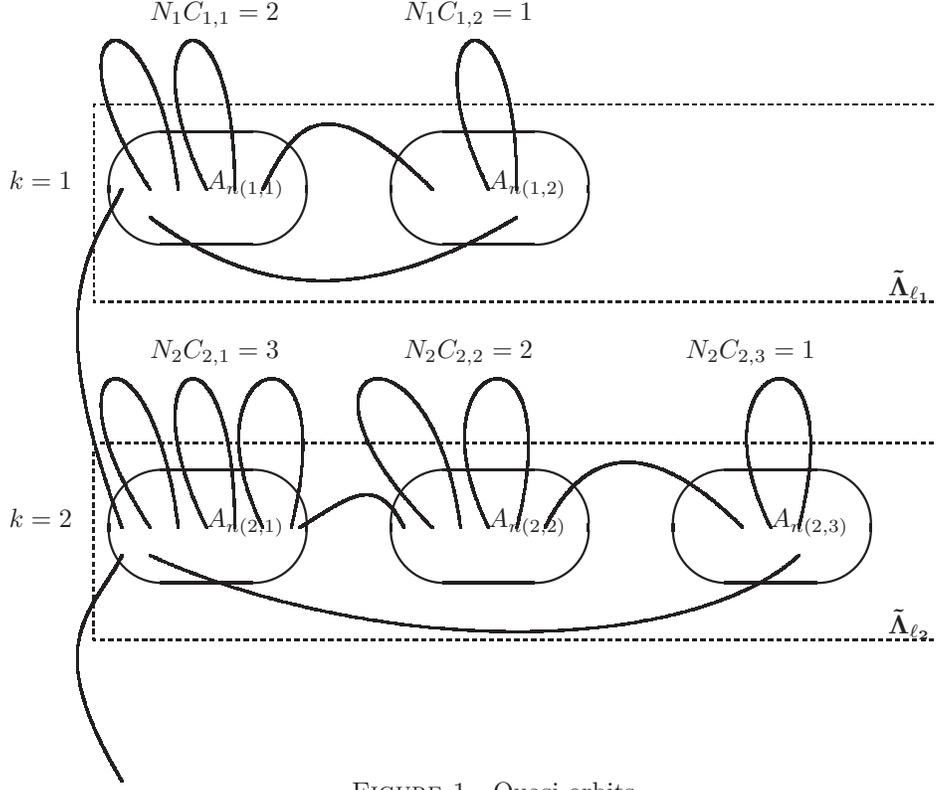
\begin{figure}[h]
\setlength{\unitlength}{0.75cm}
\begin{picture}(10,10)

\put(-2,4){\dashbox{0.075}(15,3.5)[br]{$\bf{\tilde\Lambda_{\ell_1}}$
}} \thicklines \put(0,6){\oval(3.5,2){$A_{n(1,1)}$}}
\put(5,6){\oval(3.5,2){$A_{n(1,2)}$}}
\linethickness{0.25mm}
\put(-2,-2){\dashbox{0.075}(15,3.5)[br]{$\bf{\tilde\Lambda_{\ell_2}}$
}} \thicklines \put(0,0){\oval(3.5,2){$A_{n(2,1)}$}}
\put(5,0){\oval(3.5,2){$A_{n(2,2)}$}}
\put(10,0){\oval(3.5,2){$A_{n(2,3)}$}}

\cbezier[500](-1,0)(-3,3)(-1,4)(-0.5,0)
\cbezier[500](-1,6)(-3,9)(-1,10)(-0.5,6)
\cbezier[500](0,0)(-1.5,3)(0.5,4)(0.5,0)
\cbezier[500](0,6)(-1.5,9)(0.5,10)(0.5,6)
\cbezier[500](5,6)(3.5,9)(5.5,10)(5.5,6)

\cbezier[500](4,0)(1,3)(4,4)(4.5,0)
\cbezier[500](1,0)(-0.5,3)(2.5,4)(1.5,0)
\cbezier[500](5,0)(3.5,3)(6.5,4)(5.5,0)
\cbezier[500](10,0)(8.5,3)(11.5,4)(10.5,0)
\cbezier[500](-1.5,0)(-3,4)(-2,5)(-1.5,6)
\cbezier[500](-1.5,-4.5)(-3,-2)(-2,-1.5)(-1.5,-0.5)

\cbezier[500](1,6)(1.5,7)(2,8)(4,6)
\cbezier[500](5.5,5.5)(3,4)(1,4)(-1,5.5)

\cbezier[500](1.65,0)(2.5,0.5)(3,1)(3.5,0)
\cbezier[500](6,0)(6.5,1)(7.5,2)(9.5,0)
\cbezier[500](10.5,-0.5)(8.5,-2.3)(3,-2.3)(-1,-0.5)

\put(-3.5,6){$k=1$} \put(-3.5,0){$k=2$} \put(-1,9){$N_1C_{1,1}=2$}
\put(3.5,9){$N_1C_{1,2}=1$} \put(-1,3){$N_2C_{2,1}=3$}
\put(3.5,3){$N_2C_{2,2}=2$} \put(8.5,3){$N_2C_{2,3}=1$}

\end{picture}
\bigskip
$$$$\\
\bigskip

\bigskip

\bigskip

\bigskip
\bigskip
\bigskip
\caption{\,Quasi-orbits }
\end{figure}

 Consider now the pseudo-orbit \begin{eqnarray}
 \label{quasi orbits}\quad  \quad O&=&O(x(1,1;1,1),\cdots,x(1,1; 1, N_1C_{1,1}),\cdots,x(1,p_1; 1, 1),\cdots,x(1,p_1; 1, N_1C_{1,p_1});\nonumber\\[2mm]
 &&\cdots;\nonumber \\[2mm]
 &&x(1,1;T_1,1),\cdots,x(1,1; T_1, N_1C_{1,1}),\cdots,x(1,p_1; T_11),\cdots,x(1,p_1; T_1, N_1C_{1,p_1});\nonumber\\[2mm]
 &&\vdots\nonumber \\[2mm]
 &&x(k,1; 1, 1),\cdots,x(k,1; 1, N_kC_{k,1}),\cdots,x(k,p_k,; 1, 1),\cdots,x(k,p_k; 1, N_kC_{k,p_k});\nonumber\\[2mm]&&\cdots;\nonumber \\[2mm]
 &&x(k,1;T_k,1),\cdots,x(K, 1; T_k, N_kC_{k,1}),\cdots,x(k,p_k; T_k, 1),\cdots,x(k,p_k; T_k, N_kC_{k,p_k});\nonumber\\[2mm]
 &&\vdots\nonumber \\[2mm]
 &&\cdots\,\,)\nonumber\end{eqnarray}
 with the precise form as follows
\begin{eqnarray*} \label{precise quasi orbits} &\big{\{}&\,\,[\,L_{1,1}(x(1,1; 1,1)),\cdots,L_{1,1}(x(1,1; 1, N_1C_{1,1})),\widehat{L}_{1,1,1,2}(y(m_{1,1},m_{1,2}));\\[2mm]
&&L_{1,2}(x(1,2; 1,1)),\cdots,L_{1,2}(x(1,2;1, N_1C_{1,2})),\widehat{L}_{1,2,1,3}(y(m_{1,1},m_{1,2})); \cdots\nonumber\\[2mm]
&&L_{1,p_1}(x(1,p_1;1, 1)),\cdots,L_{1,p_1}(x(1,p_1;1, N_1C_{1,p_1})),\widehat{L}_{1,p_1,1,1}(y(m_{1,p_1},m_{1,1}))\,\,;\nonumber \\[2mm]
&&\cdots\nonumber\\[2mm]
&&\,L_{1,1}(x(1,1; T_1, 1)),\cdots,L_{1,1}(x(1,1; T_1, N_1C_{1,1})),\widehat{L}_{1,1,1,2}(y(m_{1,1},m_{1,2}));\nonumber\\[2mm]
&&L_{1,2}(x(1,2;T_1, 1)),\cdots,L_{1,2}(x(1,2; T_1, N_1C_{1,2})),\widehat{L}_{1,2,1,3}(y(m_{1,1},m_{1,2})); \cdots\nonumber\\[2mm]
&&L_{1,p_1}(x(1,p_1;T_1, 1)),\cdots,L_{1,p_1}(x(1,p_1;T_1, N_1C_{1,p_1})),\widehat{L}_{1,p_1,1,1}(y(m_{1,p_1},m_{1,1}))\,]\,;\nonumber \\[2mm]
&&\widehat{L}(y(m_{1,1},m_{2,1}));\nonumber\\[2mm]
&&\vdots\nonumber\\[2mm]
&&\,[\,L_{k,1}(x(k,1; 1,1)),\cdots,L_{k,1}(x(k,1;1, N_kC_{k,1})),\widehat{L}_{k,1,k,2}(y(m_{k,1},m_{k,2}));\nonumber\\[2mm]
&&L_{k,2}(x(k,2; 1, 1)),\cdots,L_{k,2}(x(k,2;1, N_kC_{k,2})),\widehat{L}_{k,2,k,3}(y(m_{k,1},m_{k,2}));   \cdots\nonumber \\[2mm]
&&L_{k,p_k}(x(k,p_k; 1, 1)),\cdots,L_{k,p_k}(x(k,p_k;1, N_kC_{k,p_k})),\widehat{L}_{k,p_k,k,1}(y(m_{k,p_k},m_{k,1}))\,\,;\nonumber\\[2mm]
&&\widehat{L}(y(m_{k,1},m_{k+1,1}));\nonumber\\[2mm]
&&\cdots\,\,\,\nonumber\\[2mm]
&&L_{k,1}(x(k,1; T_k,1)),\cdots,L_{k,1}(x(k,1;T_k, N_kC_{k,1})),\widehat{L}_{k,1,k,2}(y(m_{k,1},m_{k,2}));\nonumber\\[2mm]
&&L_{k,2}(x(k,2; T_k, 1)),\cdots,L_{k,2}(x(k,2;T_k, N_kC_{k,2})),\widehat{L}_{k,2,k,3}(y(m_{k,1},m_{k,2}));   \cdots\nonumber \\[2mm]
&&L_{k,p_k}(x(k,p_k; T_k, 1)),\cdots,L_{k,p_k}(x(k,p_k;T_k, N_kC_{k,p_k})),\widehat{L}_{k,p_k,k,1}(y(m_{k,p_k},m_{k,1}))\,]\,;\nonumber\\[2mm]
&&\widehat{L}(y(m_{k,1},m_{k+1,1}));\nonumber\\[2mm]
&&\vdots\,\,\,\nonumber\\[2mm]
&&\cdots
 \big{\}},\end{eqnarray*}
where $x(k,j;i,t)\in V_{n(k,j)}\cap A_{n(k,j)}(m_{k,j})$.

For $k\geq 1$,  $1\leq i\leq T_k$, $1\leq j\leq p_k$, $t\geq 1$, let
$M_1=0$,
\begin{eqnarray*}M_k&=&M_{k,1}=\sum_{r=1}^{k-1}(T_rY_r+s(m_{r,1},m_{r+1,1})),\\[2mm]
M_{k,i}&=&M_{k,i,1}=M_k+(i-1)Y_k,
\\[2mm]
M_{k,i,j}&=& M_{k,i,j,1}=M_{k,i}
+\sum_{q=1}^{j-1}(N_k\,n(k,q)C_{k,q}+s(m_{k,q},m_{k,q+1})),\\[2mm]
M_{k,i,j,t}&=&M_{k,i,j}+(t-1)n(k,j).\end{eqnarray*}
 By
Theorem \ref{specification}, there exists a shadowing point $z$ of
$O$ such that
$$d(f^{M_{k,i,j,t}+q}(z),f^q(x(k,j;i, t)))<c\epsilon_0e^{-\epsilon l_k}<\frac{\varepsilon}{4\epsilon_0}\epsilon_0e^{-\epsilon l_k}
\leq \frac{\varepsilon}{4},$$ for $0\leq q\leq n(k,j)-1,$ $1\leq i
\leq T_k$, $1\leq t\leq N_kC_{k,j}$,  $1\leq j\leq p_k$. To be
precise,  $z$ can be considered as a map with variables
$x(k,j;i,t)$:
\begin{eqnarray}
 \label{quasi orbits}\quad  \quad z&=&z(x(1,1;1,1),\cdots,x(1,1; 1, N_1C_{1,1}),\cdots,x(1,p_1; 1, 1),\cdots,x(1,p_1; 1, N_1C_{1,p_1});\nonumber\\[2mm]&&\cdots;\nonumber \\[2mm]
 &&x(1,1;T_1,1),\cdots,x(1,1; T_1, N_1C_{1,1}),\cdots,x(1,p_1; T_11),\cdots,x(1,p_1; T_1, N_1C_{1,p_1});\nonumber\\[2mm]&&\cdots;\nonumber \\[2mm]
 &&x(k,1; 1, 1),\cdots,x(k,1; 1, N_kC_{k,1}),\cdots,x(k,p_k,; 1, 1),\cdots,x(k,p_k; 1, N_kC_{k,p_k});\nonumber\\[2mm]&&\cdots;\nonumber \\[2mm]
 &&x(k,1;T_k,1),\cdots,x(K, 1; T_k, N_kC_{k,1}),\cdots,x(k,p_k; T_k, 1),\cdots,x(k,p_k; T_k, N_kC_{k,p_k});\nonumber\\[2mm]&&\cdots;\nonumber \\[2mm]
 &&\cdots\,\,)\nonumber\end{eqnarray} We denote by $\mathcal{J}$ the set of
all shadowing points $z$ obtained in above procedure.

\begin{Lem}\label{converge} $\overline{\mathcal{J}}\subset G_{\mu}$. \end{Lem}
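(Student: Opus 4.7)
The plan is to establish that $\mathcal{E}_{N}(z)\to\mu$ as $N\to\infty$ \emph{uniformly} in $z\in\mathcal{J}$; since for each fixed $N$ the map $z'\mapsto\mathcal{E}_{N}(z')$ is continuous (both $f^{i}$ and the weak$^{*}$ metric $D$ are continuous), any such uniform bound passes to the closure, and so for $z\in\overline{\mathcal{J}}$ with $z_{n}\to z$, $z_{n}\in\mathcal{J}$, one gets $D(\mathcal{E}_{N}(z),\mu)=\lim_{n}D(\mathcal{E}_{N}(z_{n}),\mu)\le \sup_{z'\in\mathcal{J}}D(\mathcal{E}_{N}(z'),\mu)\to 0$, which is exactly $z\in G_{\mu}$.

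For the local estimates, fix $z\in\mathcal{J}$ and a block $(k,i,j,t)$. The shadowing bound $d(f^{M_{k,i,j,t}+q}(z),f^{q}(x(k,j;i,t)))<\varepsilon/4$ for $0\le q<n(k,j)$ combined with the partition-diameter choice makes this distance less than $b_{k}$, so the continuity input \eqref{points approximation} gives $|\varphi_{r}(f^{M_{k,i,j,t}+q}(z))-\varphi_{r}(f^{q}(x(k,j;i,t)))|<\|\varphi_{r}\|/k$ for $1\le r\le k$. Averaging over $q$ and bounding the tail of the series defining $D$ yields $D(\mathcal{E}_{n(k,j)}(f^{M_{k,i,j,t}}(z)),\mathcal{E}_{n(k,j)}(x(k,j;i,t)))=O(1/k)$; since $x(k,j;i,t)\in\Lambda^{t}(m_{k,j})$ and $n(k,j)\ge t$ forces $D(\mathcal{E}_{n(k,j)}(x(k,j;i,t)),m_{k,j})<1/k$, I obtain $D(\mathcal{E}_{n(k,j)}(f^{M_{k,i,j,t}}(z)),m_{k,j})=O(1/k)$. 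Applying the affine property of $D$ to a full level-$k$ sub-block of length $Y_{k}=N_{k}+X_{k}$, and using $N_{k}C_{k,j}n(k,j)=N_{k}a_{k,j}$, the sub-block empirical measure is a convex combination with weight $N_{k}/Y_{k}\ge 1-1/k$ (by \eqref{small bridge}) on the terms above (averaging to $\mu_{k}=\sum_{j}a_{k,j}m_{k,j}$) and weight $X_{k}/Y_{k}\le 1/k$ on bridges, producing $D(\mathcal{E}_{Y_{k}}(f^{M_{k,i}}(z)),\mu_{k})=O(1/k)$, and then $D(\mathcal{E}_{Y_{k}}(f^{M_{k,i}}(z)),\mu)=O(1/k)$ since $D(\mu,\mu_{k})<1/k$.

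For the global assembly, given $N$ large, locate the unique $k$ with $M_{k}\le N<M_{k+1}$ and write $\mathcal{E}_{N}(z)$ as a convex combination of (a) the past-levels contribution (length $M_{k}$), (b) the completed sub-blocks of level $k$ (each contributing $\mu_{k}$-close measures), and (c) the partial sub-block plus bridges at the tail. The telescoping inequalities \eqref{circle1} and \eqref{circle2} were engineered precisely so that (i) when $N$ has advanced well into level $k$, $M_{k}/N\le \tfrac{1}{k}Y_{k}T_{k}/N=O(1/k)$ and the past is dominated by level-$k$ mass; (ii) inductively, the past contribution is itself $O(1/k)$-close to $\mu$ because each level $r<k$ contributed measures $\mu_{r}$-close to $\mu$ with $D(\mu,\mu_{r})<1/r$; and (iii) when $N$ has just crossed into level $k+1$ (so the new level has not yet dominated), the large weight $M_{k+1}/N\approx 1$ lands on an empirical measure that, by Step 2 applied at the end of level $k$, is already $O(1/k)$-close to $\mu_{k}$, hence to $\mu$. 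Summing all contributions, $D(\mathcal{E}_{N}(z),\mu)=O(1/k)\to 0$ uniformly in $z\in\mathcal{J}$.

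The main obstacle is the bookkeeping in the global assembly step: one must simultaneously handle the partial sub-block on the tail, the bridge segments between sub-blocks and between levels, and the transition between consecutive levels where neither ``past'' nor ``current'' dominates. The interplay of \eqref{circle1}, \eqref{circle2}, \eqref{small bridge} and the sub-block estimate is exactly what makes the weighted combination close to $\mu$ in every case, but each sub-case requires identifying which weights are $O(1/k)$ and which are near $1$, and then invoking the right approximation ($\mu_{k}$ vs.\ $\mu_{k-1}$) on the dominant piece. Once this is verified uniformly in $z$, the closure step is immediate from continuity.
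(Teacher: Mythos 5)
Your overall architecture --- proving that $\mathcal{E}_N(z)\to\mu$ uniformly in $z\in\mathcal{J}$ and then using continuity of $z'\mapsto\mathcal{E}_N(z')$ for each fixed $N$ to pass to $\overline{\mathcal{J}}$ --- is exactly the paper's, as is the block-by-block affine decomposition controlled by \eqref{small bridge}, \eqref{circle1} and \eqref{circle2}. The gap is in your very first local estimate. You assert that the shadowing bound $d(f^{M_{k,i,j,t}+q}(z),f^q(x(k,j;i,t)))<\varepsilon/4$ ``combined with the partition-diameter choice'' makes this distance less than $b_k$. It does not: the weak shadowing lemma (Theorem \ref{specification}) only delivers the fixed scale $c\epsilon_{l_k}\leq\varepsilon/4$, which is tied to the separation scale $\varepsilon$ and does not shrink with $k$, whereas $b_k$ is forced to tend to $0$ by requiring \eqref{points approximation} for the first $k$ test functions. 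The partition-diameter choice, for its part, only controls the discrepancy between the end of one orbit segment and the beginning of the next (they lie in a common element of $\xi_k$); by itself it says nothing about the intermediate times $q$. So the step from which everything else follows is unproved as written.

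Closing it requires the hyperbolicity of $Df$ in the Lyapunov coordinates, which is precisely why the paper sets up the contraction estimates on admissible manifolds and warns in a footnote that the purely topological statement of the weak shadowing lemma does not suffice. Concretely: let $y$ be the intersection point of the admissible manifolds $H^s(f^{M_{k,i,j,t}}(z))$ and $H^u(x(k,j;i,t))$; then $d''(f^{M_{k,i,j,t}+q}(z),f^q(y))$ contracts forward at rate $e^{-(\beta_1''-\epsilon)q}$ from the initial discrepancy, $d''(f^q(y),f^q(x(k,j;i,t)))$ contracts backward at rate $e^{-(\beta_2''-\epsilon)(n(k,j)-q)}$ from the terminal discrepancy, and both discrepancies are at most $2\diam(\xi_k)$. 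Converting $d''$ back to $d$ costs the factor $\frac{2\sqrt{2}e^{\epsilon(k+1)}}{1-e^{-\epsilon}}$, and the choice $\diam\xi_k<\frac{b_k(1-e^{-\epsilon})}{4\sqrt{2}e^{(k+1)\epsilon}}$ is exactly what yields the uniform bound $<b_k$ over all $0\leq q\leq n(k,j)-1$. With this argument inserted, the remainder of your proposal (the $O(1/k)$ bookkeeping, the treatment of bridges and level transitions, and the closure step) proceeds as in the paper.
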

\begin{proof} First we prove  that for any $z\in \mathcal{J}$,
$$\lim_{k\rightarrow+\infty}\mathcal{E}_{M_{k}}(z)=\mu.$$
We begin by estimating $d(f^{M_{k,i,j,t}+q}(z),f^q(x(k,j;i,t)))$ for
$0\leq q\leq n(k,j)-1.$  Recalling that in the procedure of finding
the shadowing point $z$, all the constructions are done in the
Lyapunov neighborhoods $\Pi(x(k,j,t),a\epsilon_k)$. Moreover, notice
that we have required
$\mbox{diam}\,\xi_k<\frac{b_k(1-e^{-\epsilon})}{4\sqrt{2}e^{(k+1)\epsilon}}$
which implies that for every two adjacent orbit segments $x(k,j;
i_1, t_1)$ and $x(k,j; i_2, t_2)$, the ending point of the front
orbit segment and the beginning point of the segment following are
$\frac{b_k(1-e^{-\epsilon})}{4\sqrt{2}e^{(k+1)\epsilon}}$ close to
each other. Let $y$ be the unique intersection point of admissible
manifolds $H^s(z)$ and $H^u(x)$. In what follows, define $d''$ to be
the distance induced by $\|\cdot\|''$ in the local Lyapunov
neighborhoods. By the hyperbolicity of $Df$ in the Lyapunov
coordinates \footnote{This hyperbolic property is crucial in the
estimation of distance along adjacent segments, so the weak
shadowing lemma \ref{specification} (which is actually stated in
topological way) does not suffice to conclude Theorem \ref{main
theorem of measure} and the following Theorem \ref{main theorem of
set}.}, we obtain
\begin{eqnarray*}&&d(f^{M_{k,i,j,t}+q}(z),f^q(x(k,j; i, t)))\\[2mm]
 &\leq&
d(f^{M_{k,i,j,t}+q}(z),f^q(y))+d(f^q(y),f^q(x(k,j; i, t)))\\[2mm]
&\leq&\sqrt{2}d''(f^{M_{k,i,j,t}+q}(z),f^q(y))+\sqrt{2}d''(f^q(y),f^q(x(k,j; i, t)))\\[2mm]
&\leq&\sqrt{2}e^{-(\beta_1''-\epsilon)q}d''(f^{M_{k,i,j,t}}(z),y)+\sqrt{2}e^{-(\beta_2''-\epsilon)(n(k,j)-q)}d''(f^{n(k,j)}(y),f^{n(k,j)}(x(k,j; i, t)))\\[2mm]
&\leq &\sqrt{2}
\max\{e^{-(\beta_1''-\epsilon)q},e^{-(\beta_2''-\epsilon)(n(k,j)-q)}\}(d''(f^{M_{k,i,j,t}}(z),y)\\[2mm]&&+d''(f^{n(k,j)}(y),f^{n(k,j)}(x(k,j; i, t))))\\[2mm]
&\leq&\frac{2\sqrt{2}e^{\epsilon
(k+1)}}{1-e^{-\epsilon}}(d(f^{M_{k,i,j,t}}(z),y)+d(f^{n(k,j)}(y),f^{n(k,j)}(x(k,j; i, t))))\\[2mm]
&\leq& \frac{2\sqrt{2}e^{\epsilon
(k+1)}}{1-e^{-\epsilon}} 2\diam (\xi_k)\\[2mm]
&<&b_k
\end{eqnarray*} for
$0\leq q\leq n(k,j)-1$. Now we can deduce that
$$|\varphi_p(f^{M_{k,i,j,t}+q}(z)-\varphi_p(f^q(x(k,j; i,t)))|<\frac{1}{k}\|\varphi_p\|,\,\,\,\,1\leq p\leq k,$$
which implies that
\begin{eqnarray}\label{approximation2}D(\mathcal{E}_{n(k,j)}(f^{M_{k,i,j,t}}(z)),\mathcal{E}_{n(k,j)}(x(k,j; i, t)))<\frac1k+\frac{1}{2^{k-1}}<\frac2k,
\end{eqnarray}
for sufficiently large $k$. By the triangle inequality, we have
\begin{eqnarray*}
D(\mathcal{E}_{Y_k}(f^{M_{k,i}}(z)),\,\mu)&\leq&
D(\mathcal{E}_{Y_k}(f^{M_{k,i}}(z)),\,\mu_k)+\frac1k\\[2mm]
&\leq&D(\mathcal{E}_{Y_k}(f^{M_{k,i}}(z)),\,\frac{1}{Y_k-X_k}\sum_{j=1}^{p_k}N_kC_{k,j}n(k,j)\mathcal{E}_{n(k,j)}(f^{M_{k,i,j}}(z)))\\[2mm]
&&+D(\frac{1}{Y_k-X_k}\sum_{j=1}^{p_k}N_kC_{k,j}n(k,j)\mathcal{E}_{n(k,j)}(f^{M_{k,i,j}}(z)),\,\mu_k)+\frac1k
\end{eqnarray*}
Note that for any $\varphi\in C^0(M)$, it holds
\begin{eqnarray*}
& &\|\int\varphi d\mathcal{E}_{Y_k}(f^{M_{k,i}}(z))-\int\varphi d\frac{1}{Y_k-X_k}\sum_{j=1}^{p_k}N_kC_{k,j}n(k,j)\mathcal{E}_{n(k,j)}(f^{M_{k,i,j}}(z)))\|\\[2mm]
&=&\|\frac1{Y_k}\sum_{q=1}^{Y_k-1}\varphi(f^{M_{k,i}+q}(z))-\frac1{Y_k-X_k}\sum_{j=1}^{p_k}N_kC_{k,j}\sum_{q=1}^{n(k,j)-1}\varphi(f^{M_{k,i,j}+q}(z))\|\\[2mm]
&\leq&\|\frac1{Y_k}\sum_{j=1}^{p_k}N_kC_{k,j}\sum_{q=1}^{n(k,j)-1}\varphi(f^{M_{k,i,j}+q}(z))-\frac1{Y_k-X_k}\sum_{j=1}^{p_k}N_kC_{k,j}\sum_{q=1}^{n(k,j)-1}\varphi(f^{M_{k,i,j}+q}(z))\|\\[2mm]
&&+\|\frac1{Y_k}(\sum_{j=1}^{p_k-1}\sum_{q=1}^{s(m_{k,j},\,m_{k,j+1})-1}\varphi(f^{M_{k,i,j}-s(m_{k,j},\,m_{k,j+1})+q}(z))\\[2mm]
&&+\sum_{q=1}^{s(m_{k,p_k},\,m_{k,1})-1}\varphi(f^{M_{k,i,j}-s(m_{k,p_k},\,m_{k,1})+q}(z)))\|\\[2mm]
&\leq&[|(\frac{1}{Y_k}-\frac{1}{Y_k-X_k})(Y_k-X_k)|+\frac{X_k}{Y_k}]\|\varphi\|.
\end{eqnarray*}
Then by the definition of $D$, the above inequality implies that
\begin{eqnarray*}
&&D(\mathcal{E}_{Y_k}(f^{M_{k,i}}(z)),\,\frac{1}{Y_k-X_k}\sum_{j=1}^{p_k}N_kC_{k,j}n(k,j)\mathcal{E}_{n(k,j)}(f^{M_{k,i,j}}(z)))\\&\leq&
|(\frac{1}{Y_k}-\frac{1}{Y_k-X_k})(Y_k-X_k)|+\frac{X_k}{Y_k}.
\end{eqnarray*}
Thus, by the affine property of $D$, together
with the property $a_{k,j}=n(k,j)C_{k,j}$ and $N_k=Y_k-X_k$, we have
\begin{eqnarray*}
D(\mathcal{E}_{Y_k}(f^{M_{k,i}}(z)),\,\mu)&\leq&
D(\frac{1}{Y_k-X_k}\sum_{j=1}^{p_k}N_kC_{k,j}n(k,j)\mathcal{E}_{n(k,j)}(f^{M_{k,i,j}}(z)),
\sum_{j=1}^{p_k}a_{k,j}m_{k,j})\\[2mm]&&+|(\frac{1}{Y_k}-\frac{1}{Y_k-X_k})(Y_k-X_k)|+\frac{X_k}{Y_k}+\frac{1}{k}\\[2mm]
&\leq&\frac{N_k}{Y_k-X_k}\sum_{j=1}^{p_k}a_{k,j}D(\mathcal{E}_{n(k,j)}(f^{M_{k,i,j}}(z),m_{k,j})+\frac{2X_k}{Y_k}+\frac{1}{k}\\[2mm]
&=&\sum_{j=1}^{p_k}a_{k,j}D(\mathcal{E}_{n(k,j)}(f^{M_{k,i,j}}(z),m_{k,j})+\frac{2X_k}{Y_k}+\frac{1}{k}.
\end{eqnarray*}
Noting that
\begin{eqnarray*}
&&\sum_{j=1}^{p_k}a_{k,j}D(\mathcal{E}_{n(k,j)}(f^{M_{k,i,j}}(z),m_{k,j})\\[2mm]
&\leq&\sum_{j=1}^{p_k}a_{k,j}D(\mathcal{E}_{n(k,j)}(f^{M_{k,i,j}}(z)),\mathcal{E}_{n(k,j)}(x(k,j)))
+\sum_{j=1}^{p_k}a_{k,j}D(\mathcal{E}_{n(k,j)}(x(k,j)),\,m_{k,j})
\end{eqnarray*}
and by the definition of $\Lambda^t(m_{k,j})$ which all $x(k,j)$
belong to and by (\ref{approximation2}), we can further deduce that
\begin{eqnarray*}
D(\mathcal{E}_{Y_k}(f^{M_{k,i}}(z)),\,\mu)&\leq&\sum_{j=1}^{p_k}a_{k,j}D(\mathcal{E}_{n(k,j)}(f^{M_{k,i,j}}(z),\mathcal{E}_{n(k,j)}(x(k,j)))
+\frac1k+\frac{2X_k}{Y_k}+\frac{1}{k}\\[2mm]
&\leq&\frac{2}{k}+\frac1k+\frac{2X_k}{Y_k}+\frac{1}{k}\\[2mm]
&\leq&\frac6k\,\,\,\,\,\,(\mbox{by}\,(\ref{small bridge})).
\end{eqnarray*}
Hence, by affine property and inequalities (\ref{circle1}) and (\ref{circle2}) and $D(\cdot,\cdot)\leq1$, we obtain that
\begin{eqnarray*}
D(\mathcal{E}_{M_{k+1}}(z),\,\mu)&\leq&
\frac{\sum_{r=1}^{k-1}(T_rY_r+s(m_{r,1},m_{r+1,1}))+s(m_{k,1},m_{k+1,1})}{T_kY_k+\sum_{r=1}^{k-1}(T_rY_r+s(m_{r,1},m_{r+1,1}))+s(m_{k,1},m_{k+1,1})}
\\[2mm]
&&+\frac{T_kY_k}{T_kY_k+\sum_{r=1}^{k-1}T_rY_r+s(m_{r,1},m_{r+1,1})}D(\mathcal{E}_{Y_k}(f^{M_{k,i}}(z)),\,\mu)\\[2mm]
&\leq&
\frac{\sum_{r=1}^{k-1}(T_rY_r+s(m_{r,1},m_{r+1,1}))+s(m_{k,1},m_{k+1,1})}{T_kY_k+\sum_{r=1}^{k-1}(T_rY_r+s(m_{r,1},m_{r+1,1}))+s(m_{k,1},m_{k+1,1})}
\\[2mm]
&&+\frac{T_kY_k}{T_kY_k+\sum_{r=1}^{k-1}T_rY_r+s(m_{r,1},m_{r+1,1})}\frac6k\\[2mm]
&\leq&\frac8k.
\end{eqnarray*}
Thus,
$$\lim_{k\rightarrow+\infty}\mathcal{E}_{M_k}(z)=\mu.$$
For $M_{k,i}\leq n\leq M_{k,i+1}$ (here we appoint
$M_{k,p_k+1}=M_{k+1,1}$), it follows that
\begin{eqnarray*}
D(\mathcal{E}_{n}(z),\,\mu)&\leq&
\frac{M_{k}}{n}D(\mathcal{E}_{M_{k}}(z),\mu)+\frac{1}{n}
\sum_{p=1}^{i-1}D(\mathcal{E}_{Y_k}(f^{M_{k,p-1}}(z)),\mu)\,\,\,\,(\mbox{by affine property})\\[2mm]
&&+\frac{n-M_{k,i}}{n}D(\mathcal{E}_{n-M_{k,i}}(f^{M_{k,i}}(z)),\mu)
\\[2mm]
&\leq&
\frac{M_{k}}{n}\frac8k+\frac{(i-1)Y_k}{n}\frac6k+\frac{Y_k+s(m_{k,1},m_{k+1,1})}{n}\\[2mm]
&\leq&\frac{15}k\,\,\,\,\,\,(\mbox{by}\,(\ref{circle1})
\,\mbox{and}\,(\ref{circle2})).
\end{eqnarray*}
Let $n\rightarrow +\infty$, then $k\rightarrow +\infty$ and
$\mathcal{E}_{n}(z)\rightarrow \mu$. That is $\mathcal{J}\in
G_{\mu}$.  For any $z'\in \overline{\mathcal{J}}$, we take $z_t\in
\mathcal{J} $ with $\lim_n z_t=z'$.  Observing that for $M_{k,i}\leq
n\leq M_{k,i+1}$, $D(\mathcal{E}_{n}(z_t),\,\mu)\leq 15/k$ by
continuity it also holds that $D(\mathcal{E}_{n}(z'),\,\mu)\leq
15/k$. This completes the proof of the Lemma \ref{converge}.
\end{proof}

To finish the proof of Theorem \ref{main theorem of measure1}, we
need to compute the entropy of $\overline{\mathcal{J}}\subset
G_{\mu}$.  Notice that the choices of the position labeled by
$x(k,j; i, t)$ in (\ref{quasi orbits}) has at least
$$e^{n(k,j)(1-\frac1k)(h_{m_{k,j}}^{Kat}(f,\varepsilon\mid
\delta)-\frac{4}{k})}$$ by (\ref{count}). Moreover, fixing the
position indexed $k,j,t$, for distinct $x(k,j; i, t), x'(k,j; i,
t)\in V_{n(k,j)}\cap A_{n(k,j)}(m_{k,j}) $, the corresponding
shadowing points $z,z'$ satisfying
\begin{eqnarray*}&&d(f^{M_{k,i,j,t}+q}(z),f^{M_{k,i,j,t}+q}(z'))\\&\geq&
d(f^{q}(x(k,j; i, t)), f^{q}(x'(k,j; i, t)))-d(f^{M_{k,i,j,t}+q}(z),
f^{q}(x(k,j; i, t)))\\&&-d(f^{M_{k,i,j,t}+q}(z'),
f^{q}(x'(k,j;i, t)))\\
&\geq& d(f^{q}(x(k,j; i, t)),
f^{q}(x'(k,j,t)))-\frac\varepsilon2.\end{eqnarray*} Since $x(k,j,t),
x'(k,j; i, t)$ are $(n(k,j),\varepsilon)$-separated, so
$f^{M_{k,i,j,t}}(z)$, $f^{M_{k,i,j,t}}(z')$ are
$(n(k,j),\frac\varepsilon2)$-separated. Denote sets concerning the
choice of quasi-orbits in  $M_{ki}$
\begin{eqnarray*}
H_{ki}=\{&&(x(k,j;i,1),\cdots,x(k,j;i,N_kC_{k,j}),\cdots,x(k,p_k;i,1),\cdots,x(1,p_k;i, N_kC_{k,p_k})\\[2mm]
 &&\mid \,\,x(k,j;i,t)\in V_{n(k,j)}\cap A_{n(k,j)}\}.
\end{eqnarray*}
 Then
\begin{eqnarray*}
\sharp H_{ki}\geq
e^{\sum_{j=1}^{p_k}N_kC_{k,j}n(k,j)(1-\frac1k)(h_{m_{k,j}}^{Kat}(f,\varepsilon\mid
\delta)-\frac{4}{k})}.
\end{eqnarray*}
Hence,
\begin{eqnarray}
\label{large katok entropy}\frac{1}{Y_k}\log \,\sharp H_{ki}&\geq&
\frac{Y_k-X_k}{Y_k}\sum_{j=1}^{p_k}a_{k,j}(1-\frac1k)(h_{m_{k,j}}^{Kat}(f,\varepsilon\mid\delta)-\frac{4}{k})\\[2mm]
&\geq&(1-\frac{1}{k})\sum_{j=1}^{p_k}a_{k,j}(1-\frac1k)(h_{m_{k,j}}^{Kat}(f,\varepsilon\mid\delta)-\frac{4}{k})\nonumber\\[2mm]
&=&(1-\frac1k)^2h_{\mu_{k}}^{Kat}(f,\varepsilon\mid\delta)-\frac4k(1-\frac{1}{k})^2\nonumber\\[2mm]
&\geq&(1-\frac1k)^2(h_{\mu}^{Kat}(f,\varepsilon\mid\delta)-\frac1k)-\frac4k(1-\frac{1}{k})^2.\nonumber
\end{eqnarray}
Since $\overline{\mathcal{J}}$ is compact we can take only finite
covers $\mathcal{C}(\overline{\mathcal{J}},\varepsilon/2)$ of
$\overline{\mathcal{J}}$ in the calculation of topological entropy
$h_{\top}(\overline{\mathcal{J}},\frac\varepsilon2)$. Let
$r<h_{\mu}^{Kat}(f,\varepsilon\mid\delta)$. For each $\mathcal{A}\in
\mathcal{C}(\overline{\mathcal{J}},\varepsilon/2)$ we define a new
cover $\mathcal{A}'$ in which for $M_{k,i}\leq m\leq M_{k,i+1}$,
$B_m(z,\varepsilon/2)$ is  replaced by
$B_{M_{k,i}}(z,\varepsilon/2)$, where we suppose $M_{k,0}=M_{k-1,
p_{k-1}}$,  $M_{k,p_k+1}=M_{k+1,1}$.  Therefore,
\begin{eqnarray*}
\mathcal{Y}(\overline{\mathcal{J}};r,n,\varepsilon/2
)=\inf_{\mathcal{A}\in
\mathcal{C}(\overline{\mathcal{J}},\varepsilon/2)}\sum_{B_{m}(z,\varepsilon/2)\in
\mathcal{A}} e^{-rm}\geq \inf_{\mathcal{A}\in
\mathcal{C}(\overline{\mathcal{J}},\varepsilon/2)}\sum_{B_{M_{k,i}}(z,\varepsilon/2)\in
\mathcal{A}'} e^{-rM_{k,i+1}}.
\end{eqnarray*}
Denote
$$b=b(\mathcal{A}')=\max\{M_{k,i}\mid\,\,B_{m}(z,\frac\varepsilon2)\in \mathcal{A}'\quad \mbox{and}\quad M_{k,i}\leq m<M_{k,i+1}\}.$$
Noticing that $\mathcal{A}'$ is a cover of $\mathcal{J}$ each point
of $\mathcal{J}$ belongs to some $B_{M_{k,i}}(x, \frac\varepsilon2)$
with  $M_{k,i}\leq b$. Moreover, if $z,z'\in \mathcal{J}$ with some
position $x(k,j;i,t)\neq x'(k,j;i,t)$ then $z,z'$ can't stay in the
same $B_{M_{k,i}}(x, \frac\varepsilon2)$.  Define
\begin{eqnarray*}
W_{k,i}=\{B_{M_{k,i}}(z, \frac\varepsilon2)\in \mathcal{A}' \}.
\end{eqnarray*}
It follows that
\begin{eqnarray*}
\sum_{M_{k,i}\leq b} \,\,\,\sharp W_{k,i} \,\,\Pi_{M_{k,i}<
M_{k',i'}\leq b } \,\,\,\sharp H_{k',i'} \,\,\geq\,\,
\Pi_{M_{k',i'}\leq b }\,\, \sharp H_{k',i'}\,\,.
\end{eqnarray*}
So,
\begin{eqnarray*}
\sum_{M_{k,i}\leq b} \,\,\,\sharp W_{k,i}\,\, (\Pi_{ M_{k',i'}\leq
M_{k,i} } \,\,\,\sharp H_{k',i'})^{-1} \,\,\geq\,\, 1.
\end{eqnarray*}
From (\ref{large katok entropy}) it is easily seen that
$$\limsup_{k\rightarrow \infty}\frac{\Pi_{ M_{k',i'}\leq M_{k,i} }
\,\,\,\sharp
H_{k',i'}}{\exp(h_{\mu}^{Kat}(f,\varepsilon\mid\delta)M_{k,i})}\geq
1.$$ Since $r<h_{\mu}^{Kat}(f,\varepsilon\mid\delta)$ and
$\lim_{k\rightarrow \infty}\frac{M_{k,i+1}}{M_{k,i}}=1$, we can take
$k$ large enough so that $$\frac{M_{k,i+1}}{M_{k,i}}\leq
\frac{h_{\mu}^{Kat}(f,\varepsilon\mid\delta)}{r}.$$ Thus  there is
some constant $c_0>0$ for large $k$
\begin{eqnarray*}
\sum_{B_{M_{k,i}}(z,\varepsilon/2)\in \mathcal{A}'}
e^{-rM_{k,i+1}}&=& \sum_{M_{k,i}\leq b} \,\,\,\sharp
W_{k,i}\,\,\,e^{-rM_{k,i+1}}\\[2mm]&\geq& \sum_{M_{k,i}\leq b} \,\,\,\sharp
W_{k,i}\,\, \exp(-h_{\mu}^{Kat}(f,\varepsilon\mid\delta)M_{k,i})
\,\,\\[2mm]&\geq& c_0\sum_{M_{k,i}\leq b} \,\,\,\sharp
W_{k,i}\,\,(\Pi_{ M_{k',i'}\leq M_{k,i} } \,\,\,\sharp
H_{k',i'})^{-1}\,\, \\[2mm]&\geq&c_0,
\end{eqnarray*}
which together with the arbitrariness of $r$ gives rise to the
required inequality
$$h_{\top}(\overline{\mathcal{J}},\frac\varepsilon2)\geq
h_{\mu}^{Kat}(f,\varepsilon\mid\delta).$$ Finally, the arbitrariness
of $\varepsilon$ yields:
$$h_{\top}(f,G_{\mu})\geq h_{\mu}(f).$$\hfill$\Box$

\section{Proofs of Theorem \ref{main theorem of set} and Theorem \ref{main theorem of set1}}
We start this section by recalling the notion  of entropy introduced
by Newhouse \cite{Newhouse}. Given $\mu\in \mathcal{M}_{inv}(M,f)$,
let $F\subset M$ be a measurable
set. Define \\
\begin{eqnarray*}&(1)&H(n,\rho\mid x,F,\varepsilon)=\log \max\{\sharp E\mid E
\,\mbox{is a}\,(d^n,\rho)-\mbox{separated set in }\,F\cap
B_{n}(x,\varepsilon) \};\\[2mm]
&(2)&H(n,\rho\mid F,\varepsilon)=\sup_{x\in F}H(n,\rho\mid x,
F,\varepsilon);
\\[2mm]
&(3)&h(\rho\mid
F,\varepsilon)=\limsup_{n\rightarrow+\infty}\frac1nH(n,\rho\mid
F,\varepsilon);\\[2mm]
&(4)&h( F,\varepsilon)=\lim_{\rho\rightarrow0}h(\rho\mid
F,\varepsilon);\\[2mm]
&(5)&h^{New}_{\loc}(
\mu,\varepsilon)=\liminf_{\sigma\rightarrow1}\{h(
F,\varepsilon)\mid \mu(F)>\sigma\};\\[2mm]
&(6)&h^{New}(\mu,\varepsilon)=h_{\mu}(f)-h^{New}_{\loc}(
\mu,\varepsilon)\end{eqnarray*}

Let $\{\theta_k\}_{k=1}^{\infty}$ be a decreasing sequence which
approaches zero. One can verify that
$(h^{New}(\mu,\theta_k)\mid_{\mu\in
\mathcal{M}_{inv}(M,f)})_{k=1}^{\infty}$ is in fact an increasing
sequence of functions defined on $\mathcal{M}_{inv}(M,f)$. Further
more,
$$\lim_{\theta_k\rightarrow
0}h^{New}(\mu,\theta_k)=h_{\mu}(f)\,\,\,\mbox{ for any}\,\,\,\mu\in
\mathcal{M}(f).$$

Let $\mathcal{H}=(h_k)$ and $\mathcal{H}'=(h_k')$ be two increasing
sequences of functions on a compact domain $\mathcal{D}$. We say
$\mathcal{H}'$ {\it uniformly dominates} $\mathcal{H}$, denoted by
$\mathcal{H}'\geq \mathcal{H}$, if for every index $k$ and every
$\gamma>0$ there exists an index $k'$ such that $$h'_{k'}\geq
h_k-\gamma.$$ We say that $\mathcal{H}$ and $\mathcal{H}'$ are {\it
uniformly equivalent} if both $\mathcal{H}\geq \mathcal{H}'$ and
$\mathcal{H}'\geq \mathcal{H}$. Obviously, uniform equivalence is an
equivalence relation.

Next we give some elements from the theory of entropy structures as
developed by Boyle-Downarowicz \cite{BD}. An increasing sequence
$\alpha_1\leq \alpha_2\leq\cdots$ of partitions of $M$ is called
{\it essential} (for $f$ ) if \begin{eqnarray*}&&(1)
\diam(\alpha_k)\rightarrow0 \,\,\mbox{as}\,\, k\rightarrow
+\infty,\\[2mm]
&&(2)\mu(\partial\alpha_k)= 0 \,\,\mbox{for every }\,\,\mu\in
\mathcal{M}_{inv}(M, f).\end{eqnarray*} Here $\partial\alpha_k$
denotes the union of the boundaries of elements in the partition
$\alpha_k$. Note that essential sequences of partitions may not
exist (e.g., for the identity map on the unit interval). However,
for any finite entropy system $(f, M)$ it follows from the work of
Lindenstrauss and Weiss \cite{Lindenstrauss}\cite{Lin-Weiss} that
the product $f\times R$ with $R$ an irrational rotation has
essential sequences of partitions. Noting that the rotation doesn't
contribute entropy for every invariant measure, we can always assume
$(f,M)$ has an essential sequence.  By an {\it entropy structure} of
a finite topological entropy dynamical system $(f,M)$ we mean an
increasing sequence $\mathcal{H}=(h_k)$ of functions defined on
$\mathcal{M}_{inv}(M,f)$ which is uniformly equivalent to
$(h_{\mu}(f,\alpha_k)\mid_{\mu\in \mathcal{M}_{inv}(M,f)})$.
Combining with Katok's definition of entropy, we consider an
increasing sequence of functions on $\mathcal{M}_{inv}(M,f)$ given
by $(h_{\mu}^{Kat}(f,\epsilon_k\mid\delta)\mid_{\mu\in
\mathcal{M}_{inv}(f)})$.
\begin{Thm}\label{entropy structure}
Both $(h_{\mu}^{Kat}(f,\theta_k\mid\delta)\mid_{\mu\in
\mathcal{M}_{inv}(M,f)})$ and $(h^{New}(\mu,\theta_k)\mid_{\mu\in
\mathcal{M}_{inv}(M,f)})$ are entropy structures hence they are
uniformly equivalent.
\end{Thm}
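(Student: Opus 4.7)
The plan is to verify separately that each of the two sequences is an entropy structure in the sense of Boyle--Downarowicz; since by definition any two entropy structures on the same system are uniformly equivalent, the final assertion of the theorem then follows at once. To do the verification I would compare each sequence to the reference entropy structure $(h_\mu(f,\alpha_k)\mid_{\mu\in\mathcal{M}_{inv}(M,f)})$ coming from an essential sequence of partitions $\alpha_k$, which is an entropy structure by definition.

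For the Newhouse sequence $(h^{New}(\mu,\theta_k))$, the plan is to invoke the Downarowicz--Newhouse comparison between Newhouse's local entropy $h^{New}_{\loc}(\mu,\theta_k)$ and the partition defect $h_\mu(f)-h_\mu(f,\alpha_k)$. Using the Brin--Katok local entropy formula, one shows that for any essential sequence $\alpha_k$ the two non-increasing sequences of defects dominate each other uniformly on $\mathcal{M}_{inv}(M,f)$: fine partitions control $(d^n,\rho)$-separated counts in small Bowen balls, while conversely control of such separated counts bounds the contribution to $h_\mu(f,\alpha_k)$ of atoms of $\alpha_k^n$ whose diameters remain subexponential. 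This gives $\mathcal{H}^{New}\equiv(h_\mu(f,\alpha_k))$.

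For the Katok sequence $(h^{Kat}_\mu(f,\theta_k\mid\delta))$, the starting point is Katok's theorem for ergodic $\nu$, together with the ergodic-decomposition definition adopted earlier in the paper,
$$h^{Kat}_\mu(f,\theta_k\mid\delta)=\int h^{Kat}_\nu(f,\theta_k\mid\delta)\,d\tau(\nu),$$
which combined with Jacobs' theorem and monotone convergence gives pointwise convergence of the sequence to $h_\mu(f)$. To upgrade pointwise convergence to uniform equivalence with the partition structure, I would compare Katok's covering number $N_n(\varepsilon,\delta)$ with the $\mu$-count of atoms of $\alpha_k^n$ at scale $\diam(\alpha_k)\asymp \theta_k$. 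The essentiality condition $\mu(\partial\alpha_k)=0$ lets one pass freely between Bowen balls and partition atoms (modulo boundary effects of total $\mu$-mass less than $\delta$), and the Shannon--McMillan--Breiman theorem applied on each ergodic component relates $\log N_n(\varepsilon,\delta)$ to the entropy $h_\nu(f,\alpha_k)$ up to an error that vanishes as the scale refines.

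The main obstacle is precisely this second uniform-equivalence step: one needs that for every index $k$ and every $\gamma>0$ there is an index $k'$ depending only on $k,\gamma$ (not on $\mu$) with
$$h^{Kat}_\mu(f,\theta_{k'}\mid\delta)\geq h_\mu(f,\alpha_k)-\gamma\quad\text{for all }\mu\in\mathcal{M}_{inv}(M,f),$$
and the symmetric inequality. The delicate point is that non-ergodic $\mu$ can decompose into ergodic components with arbitrarily small individual entropies and arbitrarily slow rate of approach in Katok's formula; the saving is that by Brin--Katok this rate is controlled by the same local entropy $h^{New}_{\loc}$, so the Katok defect is dominated uniformly by the Newhouse defect and vice versa. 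Once this cross-domination is established (with the previous step giving $\mathcal{H}^{New}\equiv(h_\mu(f,\alpha_k))$), both $\mathcal{H}^{Kat}$ and $\mathcal{H}^{New}$ are shown to be entropy structures, hence uniformly equivalent, completing the proof.
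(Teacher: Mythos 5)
The paper does not actually prove this theorem: its entire proof is the one-line citation ``This theorem is a part of Theorem 7.0.1 in \cite{Downarowicz}.'' So your proposal is, in effect, an attempt to reprove the cited result rather than an alternative to an argument the authors give. As a roadmap your outline is faithful to the strategy of that reference: compare each candidate sequence to the reference sequence $(h_\mu(f,\alpha_k))$ coming from an essential sequence of partitions, and use that uniform equivalence is transitive.

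As a proof, however, there is a genuine gap, and it sits exactly where the content of the theorem lies. Both of your decisive claims --- that the Newhouse defect $h^{New}_{\loc}(\mu,\theta_k)$ and the partition defect $h_\mu(f)-h_\mu(f,\alpha_k)$ uniformly dominate each other over all of $\mathcal{M}_{inv}(M,f)$, and that the Katok defect and the Newhouse defect uniformly dominate each other --- are asserted rather than established. The phrases ``fine partitions control $(d^n,\rho)$-separated counts in small Bowen balls'' and ``this rate is controlled by the same local entropy'' describe pointwise comparisons for a fixed ergodic $\nu$; they do not by themselves produce an index $k'=k'(k,\gamma)$ that works simultaneously for every invariant measure, which is the whole difficulty (pointwise convergence of each sequence to $h_\mu(f)$ is comparatively easy and is already recorded in Section 4 of the paper). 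The issue is sharpest for the Katok sequence, where $h^{Kat}_\mu(f,\theta_k\mid\delta)$ is defined for non-ergodic $\mu$ by integrating over the ergodic decomposition, so any uniform estimate must be obtained on ergodic measures in a way that survives the integration; your proposal names this obstacle and then resolves it by appealing to the very cross-domination between the Katok and Newhouse defects that needs to be proved. A self-contained argument would require carrying out the counting estimates of Downarowicz's paper in detail; short of that, the honest proof is the one the paper gives, namely the citation.
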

\begin{proof}This theorem is a part of Theorem 7.0.1 in \cite{Downarowicz}.\end{proof}
\begin{Rem}The entropy structure in fact reflects the uniform convergence of entropy. It is well known that there are various notions of entropy.
 However, not all of them can form entropy structure, for
 example, the classic definition of entropy by partitions (see Theorem 8.0.1 in \cite{Downarowicz}).\end{Rem}

Let $\eta=\{\eta_n\}_{n=1}^{\infty}$ be a sequence decreasing to  zero.
$\mathcal{M}(\widetilde{\Lambda},\eta)$ is the subset of
$\mathcal{M}_{inv}(M,f)$ with respect to the hyperbolic rate $\eta$.

For $\delta, \varepsilon>0$ and any $\Upsilon\subset
\mathcal{M}(M)$, define
$$h_{\Upsilon,\loc}^{Kat}(f,\varepsilon\mid \delta)=\max_{\mu\in
\Upsilon}\{h_{\mu}(f)-h_{\mu}^{Kat}(f,\varepsilon\mid \delta)\}.$$

\begin{Lem}\label{asymp entropy expansive}
$\lim_{\theta_k\rightarrow
0}h_{\mathcal{M}(\widetilde{\Lambda},\eta),\loc}^{Kat}(f,\theta_k\mid
\delta)=0$.
\end{Lem}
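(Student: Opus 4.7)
The plan is to reduce the desired vanishing to a tail-entropy estimate on the Pesin blocks via the entropy-structure equivalence of Theorem~\ref{entropy structure}. Since $(h_\mu^{Kat}(f,\theta_k\mid\delta))_k$ and $(h^{New}(\mu,\theta_k))_k$ are uniformly equivalent entropy structures on $\mathcal{M}_{inv}(M,f)$, uniform convergence of either over a subset transfers to the other. Hence it suffices to prove
$$\lim_{k\to\infty}\sup_{\mu\in\mathcal{M}(\widetilde\Lambda,\eta)}h^{New}_{\loc}(\mu,\theta_k)=0.$$

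The key feature of $\mathcal{M}(\widetilde\Lambda,\eta)$ is that the Pesin blocks $\widetilde\Lambda_l$ serve as common ``large-measure'' witnesses in the definition of $h^{New}_{\loc}$. By the hyperbolic-rate assumption, $\mu(\widetilde\Lambda_l)\geq 1-\eta_l$ for every $\mu\in\mathcal{M}(\widetilde\Lambda,\eta)$, and both the set $\widetilde\Lambda_l$ and the lower bound $1-\eta_l$ are independent of $\mu$. Therefore, for any $\sigma<1-\eta_l$, the set $\widetilde\Lambda_l$ is eligible as ``$F$'' in the infimum defining $h^{New}_{\loc}$, yielding the $\mu$-uniform bound $\inf\{h(F,\varepsilon):\mu(F)>\sigma\}\leq h(\widetilde\Lambda_l,\varepsilon)$.

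The main technical step is then to establish $\lim_{\varepsilon\to 0}h(\widetilde\Lambda_l,\varepsilon)=0$ for each fixed $l$, a Pesin-theoretic form of h-expansivity. On $\widetilde\Lambda_l$ the Lyapunov neighborhoods $\Pi(\cdot,a\epsilon_l)$ have uniform size $\epsilon_l$ and $Df$ is uniformly hyperbolic in the Lyapunov metric with rates $\beta_1'',\beta_2''>0$. For $\varepsilon$ small compared to $\epsilon_l$, the admissible $(s,\gamma_0,l)$- and $(u,\gamma_0,l)$-manifolds provide a local product structure inside each Lyapunov chart: any two points $y,z\in B_n(x,\varepsilon)\cap\widetilde\Lambda_l$ whose orbits remain $\varepsilon$-close must essentially lie on a common local stable leaf of $x$, along which forward iteration contracts uniformly at rate $e^{-\beta_1''+\epsilon}$. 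A covering argument by admissible stable and unstable plaques then bounds the $(d^n,\rho)$-separated cardinality of $B_n(x,\varepsilon)\cap\widetilde\Lambda_l$ subexponentially in $n$, so that $h(\widetilde\Lambda_l,\varepsilon)=0$ once $\varepsilon$ lies below a definite fraction of $\epsilon_l$.

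To finish, given $\gamma>0$ one chooses a function $l=l(\varepsilon)\to\infty$ as $\varepsilon\to 0$, with $\varepsilon$ always below the threshold provided by the local-entropy estimate at level $l(\varepsilon)$. Since this function depends only on $\eta$ and on the Pesin-block scales $\epsilon_l$, the bound $h^{New}_{\loc}(\mu,\varepsilon)\leq h(\widetilde\Lambda_{l(\varepsilon)},\varepsilon)<\gamma$ holds uniformly in $\mu\in\mathcal{M}(\widetilde\Lambda,\eta)$, which combined with the first step yields the lemma. The principal obstacle is the local-entropy vanishing $h(\widetilde\Lambda_l,\varepsilon)\to 0$: its proof blends the Lyapunov-chart hyperbolicity, the Hadamard--Perron machinery for admissible manifolds, and the comparison constant $\sqrt{2}\,e^{\epsilon l}/(1-e^{-\epsilon})$ between the Lyapunov and Riemannian metrics from the preliminaries, all of which are uniform on the fixed block $\widetilde\Lambda_l$.
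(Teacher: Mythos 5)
Your reduction step coincides with the paper's: both arguments invoke the uniform equivalence of the Katok and Newhouse entropy structures (Theorem \ref{entropy structure}) to convert the lemma into the statement $\lim_{\varepsilon\rightarrow0}\sup_{\mu\in\mathcal{M}(\widetilde{\Lambda},\eta)}h^{New}_{\loc}(\mu,\varepsilon)=0$. The difference is that the paper simply quotes this vanishing from \cite{Newhouse} (p.~226), whereas you attempt to prove it, and the attempted proof has genuine gaps.

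First, the bound $h^{New}_{\loc}(\mu,\varepsilon)\leq h(\widetilde{\Lambda}_{l},\varepsilon)$ for a single block $l=l(\varepsilon)$ does not follow from the definition. Since $h^{New}_{\loc}(\mu,\varepsilon)=\liminf_{\sigma\rightarrow 1}\inf\{h(F,\varepsilon)\mid\mu(F)>\sigma\}$ and the inner infimum is nondecreasing in $\sigma$, the limit is a supremum over all $\sigma<1$; the block $\widetilde{\Lambda}_l$ is admissible as $F$ only for $\sigma<1-\eta_l$, so what you actually obtain is $h^{New}_{\loc}(\mu,\varepsilon)\leq\lim_{l\rightarrow\infty}h(\widetilde{\Lambda}_l,\varepsilon)$. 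For a fixed $\varepsilon$ you therefore need to control $h(\widetilde{\Lambda}_l,\varepsilon)$ for \emph{every} $l$, including those with $\epsilon_l=\epsilon_0e^{-\epsilon l}\ll\varepsilon$, where your hypothesis ``$\varepsilon$ below a definite fraction of $\epsilon_l$'' fails; choosing one level $l(\varepsilon)$ cannot repair this. Second, even for a fixed admissible $l$, the claim $h(\widetilde{\Lambda}_l,\varepsilon)=0$ is not established by uniform hyperbolicity on the block alone: the points counted in $H(n,\rho\mid x,\widetilde{\Lambda}_l,\varepsilon)$ must shadow the orbit of $x$ for $n$ steps, and that orbit leaves $\widetilde{\Lambda}_l$ and visits blocks $\Lambda_{l'}$ whose Lyapunov charts have size $\epsilon_{l'}\ll\varepsilon$, so the local product structure and contraction estimates are unavailable along those excursions. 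Bounding the length and frequency of such excursions is precisely where the hyperbolic rate $\eta$ must enter quantitatively, and it is the real content of Newhouse's estimate; your sketch uses $\eta$ only for the measure bound $\mu(\widetilde{\Lambda}_l)\geq1-\eta_l$. Unless you intend to reprove Newhouse's local entropy bound in full, the correct move is the paper's: cite it.
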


\begin{proof} First we need a proposition contained in Page 226 of
\cite{Newhouse}, which  reads as
$$\lim_{\varepsilon\rightarrow0}\sup_{\mu\in \mathcal{M}(\widetilde{\Lambda},\eta)}h_{\loc}^{New}(\mu,\varepsilon)=0.$$
By Theorem \ref{entropy structure},
$$ (h^{New}(\mu,\theta_k)\mid_{\mu\in \mathcal{M}_{inv}(M, f)})\leq(h_{\mu}^{Kat}(f,\theta_k\mid\delta)\mid_{\mu\in \mathcal{M}_{inv}(M, f)})
.$$ So, for any $k\in \mathbb{N}$,  there exists $k'>k$ such that
$$h_{\mu}^{Kat}(f,\theta_{k'}\mid\delta)
\geq h^{New}(\mu,\theta_k)-\frac1k,$$ for all $\mu\in
\mathcal{M}(\widetilde{\Lambda},\eta)$. It follows that
\begin{eqnarray*}
h_{\mu}(f)-h_{\mu}^{Kat}(f,\theta_{k'}\mid \delta)&\leq&
h_{\mu}(f)-(h^{New}(\mu, \theta_k)-\frac1k)\\[2mm]&=&h_{\loc}^{New}(\mu,\theta_k)+\frac1k,
\end{eqnarray*}
for all $\mu\in \mathcal{M}(\widetilde{\Lambda},\eta)$. Taking
supremum on $\mathcal{M}(\widetilde{\Lambda},\eta)$ and letting
$k\rightarrow +\infty$, we conclude that
$$\lim_{\theta_{k'}\rightarrow
0}h_{\mathcal{M}(\widetilde{\Lambda},\eta),\loc}^{Kat}(f,\theta_{k'}\mid
\delta)=0.$$ \end{proof}

\begin{Rem}\label{lose control of local entropy}
In \cite{Newhouse}, Lemma \ref{asymp entropy expansive} was used to
prove upper semi-continuity of metric entropy on
$\mathcal{M}(\widetilde{\Lambda},\eta)$. However, the upper
semi-continuity is broadly not true even if the underlying system is
non uniformly hyperbolic. For example, in
\cite{Downarowicz-Newhouse}, T. Downarowicz and S. E. Newhouse
established surface diffeomorphisms whose local entropy of arbitrary
pre-assigned scale is always larger than a positive constant.
Exactly, they constructed a compact subset $E$ of
$\mathcal{M}_{inv}( \Lambda, f )$ such that there exist a periodic
measure in $E$ and  a positive real number $\rho_0$ such that for
each $\mu\in E$ and each $k > 0$,
$$\limsup_{\nu\in E, \nu\rightarrow \mu} h_{\nu}( f )- h_k(\nu) >
\rho_0,$$ which implies infinity of symbolic extension entropy and
also the absence of upper semi-continuity of metric entropy and thus
no uniform separation in \cite{PS}.

\end{Rem}

Now we begin to prove Theorem \ref{main theorem of set1} and hence
complete the proof of Theorem \ref{main theorem of set} by
Proposition \ref{generical lemmas}. Throughout this section, for
simplicity, we adopt the symbols used in the proof of Theorem
\ref{main theorem of measure}. Except specially mentioned, the
relative quantitative relation of symbols share the same meaning.

{\bf Proof of Theorem \ref{main theorem of set1}}$ $

 For any
nonempty closed connected set $K\subset
\mathcal{M}(\widetilde{\Lambda},\eta)$, there exists a sequence of
closed balls $U_n$ in $\mathcal{M}_{inv}(M,f)$ with radius $\zeta_n$
in the metric $D$ with the weak$^*$ topology such that the following
holds:
\begin{eqnarray*}
&(i)&U_n\cap U_{n+1}\cap K \neq \emptyset;\\[2mm]
&(ii)& \cap_{N\geq 1}^{\infty}\cup_{n\geq N}U_n =
K;\\[2mm]
&(iii)& \lim_{n\rightarrow +\infty}\zeta_n=0.\end{eqnarray*} By
$(1)$, we take $\nu_k\in U_k\cap K$. Given $\gamma>0$, using Lemma
\ref{asymp entropy expansive}, we can find an $\varepsilon>0$ such
that
$$h_{\mathcal{M}(\widetilde{\Lambda},\eta),\loc}^{Kat}(f,\varepsilon\mid
\delta)<\gamma.$$ For each $\nu_k$, we then can choose a finite
convex combination of ergodic probability measures with rational
coefficients,
$$\mu_k=\sum_{j=1}^{p_k}a_{k,j}\,m_{k,j}$$
satisfying  the following properties:
$$D(\nu_k,\mu_k)<\frac{1}{k},\,\,\,m_{k,j}(\Lambda)=1\,\,\,\mbox{
and}\,\,\, |h_{\nu_k}^{Kat}(f,\varepsilon\mid
\delta)-h_{\mu_k}^{Kat}(f,\varepsilon\mid \delta)|<\frac1k.$$ For
each $k$, we can find $l_k$ such that
$m_{k,j}(\Lambda_{l_k})>1-\delta$ for all $1\leq j\leq p_k$. For
$c=\frac{\varepsilon}{8\epsilon_0}$, by Theorem \ref{specification}
there is a sequence of numbers $(\delta_k)_{k=1}^\infty$. Let
$\xi_k$ be a finite partition of $M$ with $\mbox{diam}\,\xi_k
<\min\{\frac{b_k(1-e^{-\epsilon})}{4\sqrt{2}e^{(k+1)\epsilon}},\epsilon_{l_k},\delta_{l_k}\}$
and $\xi_k>\{\widetilde{\Lambda}_{l_k},M\setminus
\widetilde{\Lambda}_{l_k}\}$.

For each $m_{k,j}$, following the proof of Theorem \ref{main theorem
of measure1}, we can obtain an integer $n(k,j)$ and an
$(n(k,j),\varepsilon)$-separated set $W_n$ contained in an open
subset $U(k,j)$ of some Lyapunov neighborhood with
$\diam(U(k,j))<2\diam(\xi_k)$ and satisfying that
$$\sharp\,W_{n(k,j)}\geq  e^{n(k,j)(1-\frac1k)(h_{m_{k,j}}^{Kat}(f,\varepsilon\mid
\delta)-\frac4k)}.$$ Then likewise, for $k_1,k_2,j_1,j_2$ one  can
find $y=y(m_{k_1,j_1},m_{k_2,j_2})\in U(k_1,j_1)$ satisfying that
for some $s=s(m_{k_1,j_1},m_{k_2,j_2})\in\mathbb{ N}$,
$$f^{s}(y)\in U(k_2,j_2).$$In the same manner, we consider the
following pseudo-orbit
\begin{eqnarray}
 \label{quasi orbits}\quad  \quad O&=&O(x(1,1;1,1),\cdots,x(1,1; 1, N_1C_{1,1}),\cdots,x(1,p_1; 1, 1),\cdots,x(1,p_1; 1, N_1C_{1,p_1});\nonumber\\[2mm]&&\cdots;\nonumber \\[2mm]
 &&x(1,1;T_1,1),\cdots,x(1,1; T_1, N_1C_{1,1}),\cdots,x(1,p_1; T_11),\cdots,x(1,p_1; T_1, N_1C_{1,p_1});\nonumber\\[2mm]&&\cdots;\nonumber \\[2mm]
 &&x(k,1; 1, 1),\cdots,x(k,1; 1, N_kC_{k,1}),\cdots,x(k,p_k,; 1, 1),\cdots,x(k,p_k; 1, N_kC_{k,p_k});\nonumber\\[2mm]&&\cdots;\nonumber \\[2mm]
 &&x(k,1;T_k,1),\cdots,x(K, 1; T_k, N_kC_{k,1}),\cdots,x(k,p_k; T_k, 1),\cdots,x(k,p_k; T_k, N_kC_{k,p_k});\nonumber\\[2mm]&&\cdots;\nonumber \\[2mm]
 &&\cdots\,\,)\nonumber\end{eqnarray}
with the precise type as (\ref{precise quasi orbits}), where $x(k,j;
i, t)\in W_{n(k,j)}$. Then Theorem \ref{specification} applies to
give rise to a  shadowing point $z$ of $O$ such that
$$d(f^{M_{k,i,j,t}+q}(z),f^q(x(k,j;i, t)))<c\epsilon_0e^{-\epsilon l_k}<\frac{\varepsilon}{4\epsilon_0}\epsilon_0e^{-\epsilon l_k}\leq \frac{\varepsilon}{4},$$
for $0\leq q\leq n(k,j)-1,$ $1\leq i \leq T_k$, $1\leq t\leq
N_kC_{k,j}$,  $1\leq j\leq p_k$.  By the construction of $N_k$ and
$Y_k$, it is verified that
$$D(\mathcal{E}_{Y_k}(f^{M_{k,i}}(z)),\,\nu_k)\leq\frac6k.$$
For sufficiently large $M_{k,i}\leq n\leq M_{k,i+1}$, by affine property, we have that
\begin{eqnarray*}
D(\mathcal{E}_{n}(z),\,\nu_k)&\leq&
\frac{M_{k-2}}{n}D(\mathcal{E}_{M_{k-2}}(z),\nu_k)+\frac{Y_{k-1}}{n}\sum_{r=1}^{T_{k-1}}D(\mathcal{E}_{Y_{k-1}}(f^{M_{k-1,r-1}}(z)),\nu_k)\\[2mm]
&&+\frac{s(m_{k-1,1},m_{k,1})}{n}D(\mathcal{E}_{s(m_{k-1,1},m_{k,1})}(f^{M_{k-1,T_{k-1}}}(z)),\nu_k)\\[2mm]
&&+\frac{Y_k}{n}\sum_{r=1}^{i-1}D(\mathcal{E}_{Y_k}(f^{M_{k,r-1}}(z)),\nu_k)\\[2mm]
&&+\frac{n-M_{k,i}}{n}D(\mathcal{E}_{n-M_{k,i}}(f^{M_{k,i}}(z)),\nu_k).
\\[2mm]
\end{eqnarray*}
Noting that $$D(\mathcal{E}_{Y_{k-1}}(f^{M_{k-1,i-1}}(z)),\nu_k)\leq
D(\mathcal{E}_{Y_{k-1}}(f^{M_{k-1,i-1}}(z)),\nu_{k-1})+D(\nu_{k-1},\nu_{k})$$
and using the fact that
$D(\nu_k,\nu_{k-1})\leq2\zeta_k+2\zeta_{k-1}$ and inequalities
(\ref{circle1}) and (\ref{circle2}), one can deduce that
\begin{eqnarray*}
D(\mathcal{E}_{n}(z),\,\nu_k)&\leq&
\frac1k+(\frac{6}{k-1}+2\zeta_k+2\zeta_{k-1})+\frac1k+\frac6k+\frac1k.
\end{eqnarray*}
 Letting
$n\rightarrow +\infty$, we get $V(z)\subset K$. On the other hand,
noting that $$\cap_{N\geq 1}^{\infty}\cup_{n\geq N}U_n = K,$$  so
$\mathcal{E}_{n}(z)$ can enter any neighborhood of each $\nu\in K$
in infinitely times, which implies the converse side $K\subset
V(x)$. Consequently, $V(z)=K$.

Next we show the inequality concerning entropy. Fixing $k,j,i, t $,
the corresponding shadowing points of distinct $x(k,j,i)$ are
$(n(k,j),\frac{\varepsilon}{2})$-separated. Let
\begin{eqnarray*}
H_{ki}=\{&&(x(k,j;i,1),\cdots,x(k,j;i,N_kC_{k,j}),\cdots,x(k,p_k;i,1),\cdots,x(1,p_k;i, N_kC_{k,p_k})\\[2mm]
 &&\mid \,\,x(k,j;i,t)\in V_{n(k,j)}\cap A_{n(k,j)}\}.
\end{eqnarray*}
 Then
\begin{eqnarray*}
\sharp H_{ki}\geq
e^{\sum_{j=1}^{p_k}N_kC_{k,j}n(k,j)(1-\frac1k)(h_{m_{k,j}}^{Kat}(f,\varepsilon\mid
\delta)-\frac{4}{k})}.
\end{eqnarray*}
So,
\begin{eqnarray*}
\frac{1}{Y_k}\log \,H_{ki}&\geq&
\frac{Y_k-X_k}{Y_k}\sum_{j=1}^{p_k}a_{k,j}(1-\frac1k)(h_{m_{k,j}}^{Kat}(f,\varepsilon\mid\delta)-\frac{4}{k})\\[2mm]
&\geq&(1-\frac{1}{k})\sum_{j=1}^{p_k}a_{k,j}(1-\frac1k)(h_{m_{k,j}}^{Kat}(f,\varepsilon\mid\delta)-\frac{4}{k})\\[2mm]
&=&(1-\frac1k)^2h_{\mu_{k}}^{Kat}(f,\varepsilon\mid\delta)-\frac4k(1-\frac{1}{k})^2\\[2mm]
&\geq&(1-\frac1k)^2(h_{\nu_k}^{Kat}(f,\varepsilon\mid\delta)-\frac1k)-\frac4k(1-\frac{1}{k})^2\\[2mm]
&\geq&(1-\frac1k)^2(h_{\nu_k}(f)-\gamma-\frac1k)-\frac4k(1-\frac{1}{k})^2.
\end{eqnarray*}
In sequel by the analogous  arguments  in  section 4, we obtain that
$$h_{\top}(f,G_{K})\geq \inf\{h_{\mu}(f)\mid \mu\in K\}-\gamma.$$
The arbitrariness of $\gamma$ concludes the desired inequality:
$$h_{\top}(f,G_{K})\geq \inf\{h_{\mu}(f)\mid \mu\in K\}.$$\hfill$\Box$

\section{On the Structure of Pesin set $\widetilde{\Lambda}$}

The construction of $\widetilde{\Lambda}$ asks for many techniques
that yields fruitful properties of Pesin set but meanwhile leads
difficulty to check which measures support on $\widetilde{\Lambda}$.
Sometimes $\mathcal{M}_{inv}(\widetilde{\Lambda}, f)$ contains only
the measure $\omega$ itself, for instance $\omega$ is atomic.  In
what follows, we will show that for several classes of
diffeomorphisms derived from Anosov systems
$\mathcal{M}_{inv}(\widetilde{\Lambda},f)$ enjoys many members.

\subsection{Symbolic dynamics of Anosov
diffeomorpisms}\quad Let $f_0$ be an  Anosov diffeomorphism on a
Riemannian manifold $M$. For $x\in M$, $\varepsilon_0>0$, we have
the stable manifold $W^s_{\varepsilon_0}(x)$ and the unstable
manifold $W^u_{\varepsilon_0}(x)$ defined by
\begin{eqnarray*}&&W^s_{\varepsilon_0}(x)=\{y\in M\mid
d(f_0^n(x),f_0^n(y))\leq \varepsilon_0,\quad \mbox{for all}\quad
n\geq
0\}\\[2mm]
&&W^u_{\varepsilon_0}(x)=\{y\in M\mid d(f_0^{-n}(x),F^{-n}(y))\leq
\varepsilon_0,\quad \mbox{for all}\quad n\geq 0\}.\end{eqnarray*}
 Fixing small $\varepsilon_0>0$ there exists a
$\delta_0>0$ so that $W^s_{\varepsilon_0}(x)\cap
W^u_{\varepsilon_0}(y)$ contains a single point $[x,y]$ whenever
$d(x,y)<\delta_0$. Furthermore, the function
$$[\cdot, \cdot]: \{(x,y)\in M\times M\mid d(x,y)<\delta_0 \}\rightarrow M$$
is continuous.  A  rectangle $R$ is understood by a subset of $M$
with small diameter and $[x,y]\in R$ whenever $x,y\in R$. For $x\in
R$ let $$W^s(x,R)=W^s_{\varepsilon_0}(x)\cap R\quad \mbox{and}\quad
W^u(x,R)=W^u_{\varepsilon_0}(x)\cap R.$$ For Anosov diffeomorphism
$f_0$ one can obtain the follow structure known as a  Markov
partition $\mathcal{R}=\{R_1,R_2,\cdots,R_l\}$ of $M$ with
properties:
\begin{enumerate}
\item[(1)] $\Int R_i\cap \Int R_j=\emptyset$ for $i\neq j$;

\item[(2)] $f_0 W^u(x,R_i)\supset W^u(f_0x,R_j)$ and \\
$f_0 W^s(x,R_i)\subset W^s(f_0x,R_j)$ when $x\in \Int R_i$, $fx\in
\Int R_j$.

\end{enumerate}
Using the Markov Partition $\mathcal{R}$ we can define the
transition matrix $B=B(\mathcal{R})$ by

$$B_{i,j}=\begin{cases}1\quad \mbox{if}\quad  \Int R_i\cap f_0^{-1} ( \Int R_j)\neq \emptyset;\\ 0\quad \mbox{otherwise}.
\end{cases}$$
The subshift $(\Sigma_B,\sigma)$ associated with $B$ is given by
$$\Sigma_B=\{\underline{q}\in \Sigma_l\mid \,B_{q_iq_{i+1}}=1\quad \forall i\in \mathbb{Z}\}.$$
For each $\underline{q}\in \Sigma_B$ by the hyperbolic property the
set $\cap_{i\in \mathbb{Z}}f_0^{-i}R_{q_i}$ contains of a single
point, denoted by $\pi_0( \underline{q} )$.  We denote
$$\Sigma_B(i)=\{\underline{q}\in \Sigma_B\mid q_0=i\}.$$
The following properties hold for the map $\pi_0$ (see Sinai
\cite{Sinai} and Bowen \cite{Bowen1, Bowen2}).
\begin{Prop}\label{property of Markov}$ $\\
\begin{enumerate}
\item[(1)] The map $\pi_0: \Sigma_B\rightarrow M$ is a continuous surjection
satisfying $\pi_0\circ \sigma=f_0\circ \pi_0;$\\
\item[(2)] $\pi_0(\Sigma_B(i))=R_i$,\quad $1\leq i\leq l$;\\
\item[(3)] $h_{\top}(\sigma, \Sigma_B)=h_{\top}(f_0,M)$.
\end{enumerate}

\end{Prop}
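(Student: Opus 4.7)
The plan is to establish the three parts in sequence, following the classical development of Markov partitions due to Sinai and Bowen.

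The key geometric ingredient is the local product structure supplied by the bracket $[\cdot,\cdot]$. For any admissible $\underline{q}\in\Sigma_B$, the Markov property (2) forces the forward intersection $\bigcap_{i\geq 0}f_0^{-i}R_{q_i}$ to shrink exponentially along the stable direction inside $R_{q_0}$ and, symmetrically, the backward intersection $\bigcap_{i\leq 0}f_0^{-i}R_{q_i}$ to shrink exponentially along the unstable direction inside $R_{q_0}$. Applying the bracket to these two nested limits produces the unique point defining $\pi_0(\underline{q})$.

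For part (1), continuity of $\pi_0$ follows from the same exponential shrinkage: if $\underline{q}$ and $\underline{q}'$ agree on $[-N,N]$, then $\pi_0(\underline{q})$ and $\pi_0(\underline{q}')$ lie within distance $O(\lambda^N)$ for some $\lambda\in(0,1)$ dictated by the hyperbolic rates of $f_0$. The commutation $\pi_0\circ\sigma=f_0\circ\pi_0$ is immediate from the nested-intersection definition, since reindexing $q_i\mapsto q_{i+1}$ just shifts the intersection by one application of $f_0$. For surjectivity, given $x\in M$, I would inductively choose $q_i$ so that $f_0^{i}(x)\in R_{q_i}$, with the Markov property guaranteeing $B_{q_iq_{i+1}}=1$; at points on the common boundary of several rectangles one simply picks any admissible extension, which is harmless.

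Part (2) is essentially the construction itself: the inclusion $\pi_0(\Sigma_B(i))\subset R_i$ is definitional since $\pi_0(\underline{q})\in R_{q_0}$, and the reverse inclusion $R_i\subset\pi_0(\Sigma_B(i))$ follows from the surjectivity argument specialized to $x\in R_i$, which forces $q_0=i$.

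Part (3) is the main obstacle. The factor relation from (1) immediately yields $h_{\top}(\sigma,\Sigma_B)\geq h_{\top}(f_0,M)$ via the standard fact that a continuous surjective semiconjugacy cannot decrease topological entropy. For the reverse direction, one must control the multiplicity of $\pi_0$. The standard route is to show that $\pi_0$ is bijective off the orbit of $\partial\mathcal{R}$ and that the global preimage count $\sharp\pi_0^{-1}(x)$ is bounded by a constant depending only on $l$; a finite-to-one continuous factor map between compact systems preserves topological entropy. The subtlety deserving attention is that $\partial\mathcal{R}$ itself is not $f_0$-invariant, so one cannot simply discard it. The Markov property instead furnishes the split invariances $f_0(\partial^s\mathcal{R})\subset\partial^s\mathcal{R}$ and $f_0^{-1}(\partial^u\mathcal{R})\subset\partial^u\mathcal{R}$ for the stable and unstable parts of the boundary, and this split invariance is precisely what forces the uniform bound on $\sharp\pi_0^{-1}(x)$ and therefore the equality of entropies.
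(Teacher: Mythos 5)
Your outline is the classical Sinai--Bowen construction, and this is exactly the route the paper takes: it offers no proof of this proposition at all, only the citation to Sinai and Bowen, so your sketch supplies precisely the argument being invoked. The one soft spot is surjectivity at points whose orbit meets $\partial\mathcal{R}$: rather than ``picking any admissible extension'' (whose existence at such a point is not obvious a priori), the standard argument shows that $\pi_0(\Sigma_B)$ contains the residual set $M\setminus\bigcup_{n\in\mathbb{Z}}f_0^{n}(\partial\mathcal{R})$, on which itineraries are automatically admissible because consecutive symbols are witnessed by interior points, and then concludes $\pi_0(\Sigma_B)=M$ by compactness; the same density-plus-compactness argument, applied inside $R_i$ (which is the closure of its interior), is what gives $R_i\subset\pi_0(\Sigma_B(i))$ in part (2).
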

Since $B$ is $(0,1)$-matrix, using Perron Frobenius Theorem the
maximal eigenvalue $\lambda$ of $B$ is positive and simple.
$\lambda$ has the row  eigenvector  $u=(u_1,\cdots,u_l)$, $u_i>0$,
and the column eigenvector $v=(v_1,\cdots, v_l)^{T}$, $v_i>0$. We
assume $\sum_{i=1}^lu_iv_i=1$ and denote
$(p_1,\cdots,p_l)=(u_1v_1,\cdots,u_lv_l)$. Define a new matrix
$$\mathcal{P}=(p_{ij})_{l\times l},\quad\quad\mbox{where}\quad p_{ij}=\frac{B_{ij}\,v_j}{\lambda\,v_i}.$$ Then $\mathcal{P}$
can define a Markov chain with probability $\mu_0$ satisfying
$$\mu_0([a_0a_1\cdots a_i])=p_{a_0}p_{a_0a_1}\cdots p_{a_{i-1}a_i}. $$
Then $\mu_0$ is $\sigma$-invariant and Gurevich \cite{Gu1,Gu2}
proved that $\mu_0$ is the unique maximal measure of
$(\Sigma_B,\sigma)$, that is,
$$h_{\top}(\sigma, \Sigma_B)=h_{\mu_0}(\sigma,\Sigma_B)=\log\lambda.$$
In addition, Bowen \cite{Bowen1} proved that $\pi_{0*}(\mu_0)$ is
the unique maximal measure of $f_0$ and $\pi_{0*}(\mu)(\partial
\mathcal{R})=0$, where $\partial \mathcal{R}$ consists of all
boundaries of $R_i$, $1\leq i\leq l$.

Denote $\mu_1= \pi_{0*}(\mu_0)$. Then $\mu_1(\pi_0\Sigma_B(i))=p_i$
for $1\leq i\leq l$. For $0<\gamma<1$, $N\in \mathbb{N}$ define

\begin{eqnarray*}\Gamma_N(i,\gamma)=\{x\in M\mid&&\sharp \{n\leq j\leq n+k-1\mid
f_0^j(x)\in R_i\}\leq N+k(p_i+\gamma)+|n|\gamma,\\[2mm]&&\sharp \{n\leq j\leq n+k-1\mid
f_0^{-j}(x)\in R_i\}\leq N+k(p_i+\gamma)+|n|\gamma\\[2mm]&&\quad \forall \,k\geq
1,\,\,\,\forall\, n\in \mathbb{Z}\}.\end{eqnarray*}  Then
$f_0^{\pm}(\Gamma_N(i,\gamma))\subset \Gamma_{N+1}(i,\gamma)$. Let
$\Gamma(i,\gamma)=\cup_{N\geq 1}\Gamma_N(i,\gamma)$.
\begin{Lem}\label{full measure}
For any $m\in \mathcal{M}_{inv}(M,f_0)$, if \,$m(R_i)<p_i+\gamma/2$
then $m(\Gamma(i,\gamma))=1$. \end{Lem}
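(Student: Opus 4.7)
The plan is to control the deviations of the visit counts $S_k(x,n) := \sharp\{n\le j\le n+k-1 : f_0^j(x)\in R_i\}$ from their target rate $k(p_i+\gamma)$ by using Birkhoff's ergodic theorem, exploiting the gap of size $\gamma/2$ between $m(R_i)$ and $p_i+\gamma$ to absorb the fluctuations as $|n|$ grows.

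First I would reduce (at least morally) to the ergodic case by the ergodic decomposition $m=\int m_\omega\,d\tau(\omega)$, noting that $\Gamma(i,\gamma)$ is $f_0$-invariant. Assume therefore that $m$ is ergodic with $q:=m(R_i)<p_i+\gamma/2$. Apply Birkhoff's theorem to $\chi_{R_i}$: for $m$-a.e.\ $x$, $S_j(x,0)/j\to q$. Define the forward overshoot
$$\psi(x)=\sup_{k\geq 1}\bigl[S_k(x,0)-k(p_i+\gamma)\bigr].$$
Since $q<p_i+\gamma$, the function $\chi_{R_i}-(p_i+\gamma)$ has strictly negative integral, so $S_k-k(p_i+\gamma)\to-\infty$ $m$-a.e., giving $\psi<\infty$ $m$-a.e. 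Because $S_k(x,n)=S_k(f_0^n x,0)$, the forward part of the defining condition for $\Gamma_N(i,\gamma)$ is equivalent to
$$\psi(f_0^n x)\leq N+|n|\gamma\quad\text{for all }n\in\mathbb{Z},$$
so it suffices to show $\sup_{n\in\mathbb{Z}}\bigl[\psi(f_0^n x)-|n|\gamma\bigr]<\infty$ $m$-a.e.

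The key estimate controls $\psi(f_0^n x)$ sublinearly in $n$. Writing $A(j):=S_j(x,0)-j(p_i+\gamma)$, one has
$$\psi(f_0^n x)=\sup_{k\geq 1}\bigl[A(n+k)-A(n)\bigr].$$
Fix $\epsilon<\gamma/4$. Since $\lim A(j)/j=q-p_i-\gamma\leq -\gamma/2$, Birkhoff produces $J(x)$ with $|A(j)-j(q-p_i-\gamma)|\leq j\epsilon$ for every $j\geq J(x)$. For $n\geq J(x)$ and every $k\geq 1$:
$$A(n+k)-A(n)\leq (n+k)(q-p_i-\gamma+\epsilon)-n(q-p_i-\gamma-\epsilon)\leq k\cdot(-\gamma/4)+2n\epsilon\leq n\gamma/2.$$
Hence $\psi(f_0^n x)\leq n\gamma/2$ for $n\geq J(x)$; for $0\leq n<J(x)$, $\psi(f_0^n x)$ is bounded by some finite $C(x)$. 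Together, $\psi(f_0^n x)-n\gamma\leq C(x)$ for all $n\geq 0$, so $x\in\Gamma_{\lceil C(x)\rceil}$ for the forward-time piece. The condition for $n<0$, as well as the backward visit counts $\sharp\{n\leq j\leq n+k-1:f_0^{-j}(x)\in R_i\}$, is handled by running exactly the same argument with $f_0^{-1}$ in place of $f_0$, using that $m$ is also $f_0^{-1}$-invariant (and ergodic for $f_0^{-1}$ iff ergodic for $f_0$).

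The main obstacle will be the ergodic-decomposition step: the hypothesis $m(R_i)<p_i+\gamma/2$ is only an average condition, and a priori one could still have a set of ergodic components of positive $\tau$-mass on which $m_\omega(R_i)\geq p_i+\gamma$, where the estimate above genuinely fails. To close this gap I would either (i) read the lemma as applying componentwise, (ii) apply a maximal ergodic inequality to $\chi_{R_i}-(p_i+\gamma)$ directly on $m$ to show that the invariant set $\{q(x)\geq p_i+\gamma\}$ has $m$-measure zero (using the $\gamma/2$ slack as the Chebyshev-type margin), or (iii) pass through an intermediate quantitative Birkhoff statement that uses the slack $\gamma/2$ to control the tails uniformly across ergodic components. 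The rest of the argument — the cocycle estimate giving $\psi(f_0^n x)\leq n\gamma/2$ — is robust and does not require further ingredients.
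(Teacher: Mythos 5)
Your argument is essentially the paper's: the paper likewise applies Birkhoff with the $\gamma/2$ slack to get a two\mbox{-}sided bound $-N_0(x)+n(m(R_i)-\gamma/2)\le S_n(x,0)\le N_0(x)+n(m(R_i)+\gamma/2)$ for all $n\ge 1$ (and the analogous backward bound), and then differences over the window $[n,n+k-1]$ to obtain $S_k(x,n)\le 2N_0(x)+k(m(R_i)+\gamma/2)+n\gamma\le 2N_0(x)+k(p_i+\gamma)+n\gamma$; your overshoot function $\psi$ and the cocycle identity package exactly the same computation. The ``main obstacle'' you flag is real, but it is not resolved in the paper either: the paper's proof silently assumes that the Birkhoff averages of $\chi_{R_i}$ converge to $m(R_i)$, which holds only for ergodic $m$, and for genuinely non-ergodic $m$ the lemma as stated can fail (mix an ergodic component whose $R_i$-frequency exceeds $p_i+\gamma$ with a low-frequency one so that the average stays below $p_i+\gamma/2$; the first component then lies outside every $\Gamma_N(i,\gamma)$). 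Since the lemma is only ever invoked for ergodic measures ($\mu_1$, and the ergodic $m\in U$ in Proposition \ref{frequence}), your option (i) --- reading the statement componentwise, i.e.\ for ergodic $m$ --- is the right fix, and in that setting your proof is complete.
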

\begin{proof}
Since $m(R_i)<p_i+\gamma/2$, for $m$ almost all $x$ one can fine
$N(x)>0$ such that
\begin{eqnarray*}&&n(m(R_i)-\frac\gamma2)\leq\sharp \{0\leq j\leq
n-1\mid
f_0^j(x)\in R_i\}\leq n(m(R_i)+\frac\gamma2),\quad \forall \,n\geq N(x);\\[2mm]
&&n(m(R_i)-\frac\gamma2)\leq\sharp \{0\leq j\leq n-1\mid
f_0^{-j}(x)\in R_i\}\leq n(m(R_i)+\frac\gamma2),\quad \forall
\,n\geq N(x).
\end{eqnarray*}
Take $N_0(x)$ to be the smallest number such that for every $n\geq
1$,
\begin{eqnarray*}&&-N_0(x)+n(m(R_i)-\frac\gamma2)\leq\sharp \{0\leq j\leq n-1\mid
f_0^j(x)\in R_i\}\leq N_0(x)+n(m(R_i)+\frac\gamma2);\\[2mm]
&&-N_0(x)+n(m(R_i)-\frac\gamma2)\leq\sharp \{0\leq j\leq n-1\mid
f_0^{-j}(x)\in R_i\}\leq N_0(x)+n(m(R_i)+\frac\gamma2).
\end{eqnarray*}
Then for any $k\geq 1$,
\begin{eqnarray*}
&&\sharp \{n\leq j\leq n+k-1\mid f_0^j(x)\in R_i\}\\[2mm]
&=& \sharp
\{0\leq j\leq n+k-1\mid f_0^j(x)\in R_i\}-\sharp \{0\leq j\leq
n-1\mid
f_0^j(x)\in R_i\}\\[2mm]
&\leq&
N_0(x)+(n+k)(m(R_i)+\frac\gamma2)-(-N_0(x)+n(m(R_i)-\frac\gamma2))\\[2mm]
&=& 2N_0(x)+k(m(R_i)+\frac\gamma2)+n\gamma.
\end{eqnarray*}
In this manner we can also show $$\sharp \{n\leq j\leq n+k-1\mid
f_0^j(x)\in R_i\}\leq 2N_0(x)+k(m(R_i)+\frac\gamma2)+n\gamma.$$
Thus, $x\in \Gamma_{N_0(x)}(i,\gamma)$.
\end{proof}

By Lemma \ref{full measure}, $\mu_1(\Gamma(i,\gamma))=1$. We further
define
$$\widetilde{\Gamma}_N(i,\gamma)=\supp(\mu_1\mid
\Gamma_N(i,\gamma))\quad \mbox{ and}\quad
\widetilde{\Gamma}(i,\gamma)=\cup_{N\geq
1}\widetilde{\Gamma}_N(i,\gamma).$$ It holds that
$\widetilde{\Gamma}(i,\gamma)$ is $f$-invariant and
$\mu_1(\widetilde{\Gamma}(i,\gamma))=1$.

\begin{Prop}\label{frequence} There is a neighborhood $U$ of $\mu_1$ in
$\mathcal{M}_{inv}(M,f_0)$  such that for any ergodic measure $m\in
U$ we have $m \in
\mathcal{M}_{inv}(\widetilde{\Gamma}(i,\gamma),f_0)$.

\end{Prop}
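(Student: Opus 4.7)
The plan proceeds in three steps: choose the neighborhood $U$ so that $m(R_i)$ stays strictly below $p_i+\gamma/2$, invoke Lemma \ref{full measure} to land inside $\Gamma(i,\gamma)$, and finally upgrade from $\Gamma(i,\gamma)$ to the support-based set $\widetilde{\Gamma}(i,\gamma)$.

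First, since each rectangle $R_i$ is closed, the functional $m\mapsto m(R_i)$ is upper semicontinuous on $\mathcal{M}_{inv}(M,f_0)$ in the weak-$*$ topology, so $U:=\{m:m(R_i)<p_i+\gamma/2\}$ is open. Because Bowen's analysis of the Parry measure gives $\mu_1(R_i)=p_i$ with $\mu_1(\partial\mathcal{R})=0$, we have $\mu_1\in U$, so $U$ is a neighborhood of $\mu_1$. Second, Lemma \ref{full measure} applied to any $m\in U$ (no ergodicity needed) yields $m(\Gamma(i,\gamma))=1$.

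The third step is the crux and the main obstacle. By construction $\widetilde{\Gamma}_N(i,\gamma)=\supp(\mu_1|\Gamma_N(i,\gamma))$ is a priori a proper subset of $\Gamma_N(i,\gamma)$, and the difference, while $\mu_1$-null, could in principle be charged by an arbitrary $m$. To rule this out, I would shrink $U$ further to $\{m:m(R_i)<p_i+\gamma/4\}$ and apply Lemma \ref{full measure} with parameter $\gamma/2$ in place of $\gamma$, obtaining $m(\Gamma(i,\gamma/2))=1$. Then I would establish the set-theoretic inclusion
$$\Gamma(i,\gamma/2)\subset\widetilde{\Gamma}(i,\gamma),$$
which immediately gives $m(\widetilde{\Gamma}(i,\gamma))=1$. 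The inclusion rests on two structural facts: (a) each indicator $\mathbf{1}_{R_i}$ is upper semicontinuous because $R_i$ is closed, so each counting function $C_{k,n}(x)=\sum_{j=n}^{n+k-1}\mathbf{1}_{R_i}(f_0^jx)$ is u.s.c.\ and consequently each $\Gamma_N(i,\gamma)$ is closed; and (b) $\mu_1$ has full topological support on $M$, since its symbolic lift $\mu_0$ is the Parry measure of the irreducible, aperiodic SFT $\Sigma_B$.

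The hardest sub-step is extracting, from "$x\in\Gamma_N(i,\gamma/2)$", a single index $N'=N'(N)$ for which $\mu_1(V\cap\Gamma_{N'}(i,\gamma))>0$ holds for every open neighborhood $V$ of $x$. The strategy is to exploit the $\gamma/2$-versus-$\gamma$ slack: for $x\in\Gamma_N(i,\gamma/2)$ and $y$ sufficiently close to $x$, the orbits $f_0^jx$ and $f_0^jy$ agree on $\mathbf{1}_{R_i}$ for $|j|$ up to a finite range where the counting function is stable by u.s.c., while on the remaining (unbounded) range the Birkhoff averages of $\mu_1$-typical $y$ converge to $p_i<p_i+\gamma$. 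Combining this with an Egorov/uniform-Birkhoff bound on the $\mu_1$-full set $\Gamma(i,\gamma/2)$ (which is $\mu_1$-full because $\mu_1\in U$) and with the full support of $\mu_1$, should yield the required uniform $N'$. This is where essentially all of the technical bookkeeping lives; once it is in place, assembling the three steps completes the proof.
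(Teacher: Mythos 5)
Your first two steps match the paper: the neighborhood $U=\{m: m(R_i)<p_i+\gamma/2\}$ and the application of Lemma \ref{full measure} are exactly right. But the third step, which you correctly identify as the crux, is left unproved, and the tools you propose for it are not sufficient. To show $x\in\widetilde{\Gamma}(i,\gamma)=\cup_N\supp(\mu_1\mid\Gamma_N(i,\gamma))$ you must exhibit a \emph{single} index $N'$ such that $\mu_1(V\cap\Gamma_{N'}(i,\gamma))>0$ for \emph{every} neighborhood $V$ of $x$. An Egorov/uniform-Birkhoff argument produces a set $K_\epsilon$ with $\mu_1(K_\epsilon)>1-\epsilon$ contained in some $\Gamma_{N'(\epsilon)}(i,\gamma)$, but a small neighborhood $V$ of $x$ may satisfy $\mu_1(V)<\epsilon$ and hence miss $K_\epsilon$ entirely; you cannot shrink $\epsilon$ with $V$ because $N'(\epsilon)\to\infty$. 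Full topological support of $\mu_1$ gives $\mu_1(V)>0$ but says nothing about $\mu_1(V\cap\Gamma_{N'})$. What is missing is a quantitative lower bound on the measure, inside every small neighborhood of $x$, of points that both shadow $x$ on a finite window and satisfy the counting bounds globally with a controlled constant. (A further small error: $\mathbf{1}_{R_i}$ being u.s.c.\ makes the counting functions u.s.c., which renders $\{C_{k,n}\geq c\}$ closed, not the sublevel set $\{C_{k,n}\leq c\}$; so your claim that $\Gamma_N(i,\gamma)$ is closed does not follow.)

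The paper closes this gap by working symbolically. It first lifts $m$ to an ergodic measure $m_0$ on $(\Sigma_B,\sigma)$ with $\pi_{0*}m_0=m$ (the Claim), transfers the problem to the sets $\Upsilon_N(i,\gamma)$, and then proves Lemma \ref{measures on support} by a splicing argument: given $\underline{q}\in\Upsilon_{N_0}(i,\gamma)$, it grafts the central word $q_{-n}\cdots q_n$ onto two-sided tails drawn from a positive-$\mu_0$-measure subset $\Upsilon(i,j)$ of $\widetilde{\Upsilon}_{N_0}(i,\gamma)$, using connecting words of length at most $L_0$ supplied by mixing of the SFT. The Markov property of the Parry measure $\mu_0$ then bounds the measure of the spliced set from below by $\mu_0(\Upsilon^{-}(i,j))\,p_{jq_{-n}}\cdots p_{q_n j}\,\mu_0(\Upsilon^{+}(i,j))>0$ --- this is precisely the positivity in arbitrarily small cylinder neighborhoods that full support alone cannot deliver. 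Note also that the finite-window surgery perturbs the counting function by at most $2L_0+2N_0$, which is absorbed into the additive constant ($N_0\mapsto N_1=2L_0+2N_0+1$) with the \emph{same} $\gamma$; the $\gamma/2$-versus-$\gamma$ slack you build in is therefore not the mechanism that makes the argument work, and by itself it does not substitute for the Markov/Gibbs lower bound.
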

\begin{proof}
Observing that $\mu_1(\partial R_i)=0$, for $\gamma>0$ there exists
a neighborhood $U$ of $\mu_1$ in $\mathcal{M}_{inv}(M,F)$ such that
for any $m\in U$ one has
$$m( R_i)<p_i+\frac\gamma2.$$
{\bf Claim :} \,\,\,\,We can find an ergodic measure $m_0\in
\mathcal{M}_{inv}(\Sigma_B, \sigma)$ satisfying $\pi_{0*}m_0=m$.

\noindent{\it Proof of Claim.} Denote the basin of $m$ by
\begin{eqnarray*}Q_{m}(M,f_0)=\Big{\{}x\in M\mid \lim_{n\to+\infty}\frac 1n
\sum_{j=0}^{n-1}\varphi(f_0^ix) &=&\lim_{n\to-\infty}\frac 1n
\sum_{j=0}^{n-1}\varphi(f_0^ix)\\[2mm]&=&\int_M\varphi dm,\quad\forall
\varphi\in C^0(M)\Big{\}}.\end{eqnarray*} Take and fix a point $x\in
Q_{m}(M,\, f_0)$ and choose $q\in \Sigma_B$ with $\pi_0(q)=x$.
Define a sequence of measures $\nu_n$ on $\Sigma_B$ by
$$\int\,\psi d \nu_n:=\frac 1n \Sigma_{i=0}^{n-1}\psi(\sigma^i(q)),\,\,\,\,
\,\,\forall \psi\in C^0(\Sigma_B).$$ By taking a subsequence when
necessary we can assume that $\nu_n\to \nu_0.$ It is standard to
verify that $\nu_0$ is a $\sigma$-invariant measure and $\nu_0$
covers $m$ i.e., $\pi_{0*}(\nu_0)=m.$ Set
$$Q(\sigma):=
\cup_{\nu \in \mathcal{M}_{erg}(\Sigma_B, \sigma)} Q_\nu(\Sigma_B,
\sigma).$$ Then $ Q(\sigma)$ is a $\sigma-$invariant total measure
subset in $\Sigma_B.$ We have
\begin{align*}
&m( Q_{m}(M,F) \cap \pi_{0} Q(\sigma))\\
\ge & \nu_0(\pi_{0}^{-1}Q_{m}(M,f_0)\cap Q(\sigma))\\
=&1.
\end{align*}
Then the set
\begin{align*} \mathcal{A}_0:=\Big{\{}\nu\in \mathcal{M}_{erg}(\Sigma_B, \sigma)\,\mid\,\,&\exists \,\, q\in
Q(\sigma), \pi_0(q)\in Q_{m}(M, f_0), s.\,t. \\
&\lim_{n\to+\infty}\frac 1n\Sigma_{i=0}^{n-1} \psi(\sigma^i(q))
=\lim_{n\to-\infty}\frac 1n\Sigma_{i=0}^{n-1}
\psi(\sigma^i(q))\\
&=\int _{\Sigma_B} \psi\,d\nu\,\,\,\,\,\,\quad\quad\quad \forall
\psi\in C^0(\Sigma_B)\,\Big{\}}
\end{align*}
is non-empty. It is clear that $\nu $ covers $m$, $\pi_{0*}(\nu)=m,$
for all $\nu\in \mathcal{A}_0.$

\hfill $\Box$

We continue the proof of Proposition \ref{frequence}. Since
$\pi_{0*}(m_0)=m$ so $m_0(\pi_{0}^{-1}(R_i))=m(R_i)< p_i+\gamma/2$
which together with $\Sigma_B(i)\subset \pi_0^{-1}(R_i)$ implies
that
$$m_0(\Sigma_B(i))< p_i+\frac\gamma2.$$
In particular, $\mu_0(\Sigma_B(i))< p_i+\frac\gamma2.$ For
$0<\gamma<1$, $N\in \mathbb{N}$ define
\begin{eqnarray*}\Upsilon_N(i,\gamma)=\{\underline{q}\in \Sigma_B\mid &&\sharp
\{n\leq j\leq n+k-1\mid q_j=i\}\leq  N+k(p_i+\gamma)+|n|\gamma,\\[2mm]
&& \sharp
\{n\leq j\leq n+k-1\mid q_{-j}=i\}\leq N+k(p_i+\gamma)+|n|\gamma\\[2mm]
&&\quad \forall\,k\geq1\,\,\,\forall \,n\in
\mathbb{Z}\}.\end{eqnarray*} Let $\Upsilon(i,\gamma)=\cup_{N\geq
1}\Upsilon_N(i,\gamma)$. Then $\mu_0(\Upsilon(i,\gamma))=1$. Further
define
$$\widetilde{\Upsilon}_N(i,\gamma)=\supp(\mu_0\mid
\Upsilon_N(i,\gamma))\quad \mbox{ and}\quad
\widetilde{\Upsilon}(i,\gamma)=\cup_{N\geq
1}\widetilde{\Upsilon}_N(i,\gamma).$$ It also holds that
$\widetilde{\Upsilon}(i,\gamma)$ is $\sigma$-invariant and
$\mu_0(\widetilde{\Upsilon}(i,\gamma))=1$.

\begin{Lem}\label{measures on support}Given $m_0\in \mathcal{M}_{erg}(\Sigma_B,\sigma)$,
if $m_0(\Sigma_B(i))<p_i+\gamma/2$ then $m_0\in
\mathcal{M}_{inv}(\widetilde{\Upsilon}(i,\gamma),\sigma)$.

\end{Lem}

\noindent{\it Proof of Lemma.}\,\,\,\, Since
$m_0(\Sigma_B(i))<p_i+\gamma/2$ we obtain $m_0(\cup_{N\in
\mathbb{N}}\,\Upsilon_{N}(i,\gamma))=1$. We can take $N_0$ so large
that $m_0(\Upsilon_{N_0}(i,\gamma))>0$ and
$\mu_0(\widetilde{\Upsilon}_{N_0}(i,\gamma))>0$. Define
$$\Upsilon(i,j)=\{\underline{q}\in  \widetilde{\Upsilon}_{N_0}(i,\gamma)\mid q_0=j\}.$$
Then there exists $j\in [ 1,l ]$ such that $\mu_0(\Upsilon(i,j))>0$.

Noting that $(\Sigma_B,\sigma)$ is mixing, there is $L_0>0$ such
that for each pair $j_1,j_2$ one can choose an sequence
$L(j_1,j_2)=(q_1 \cdots q_L)$ satisfying $q_1=j_1$, $q_L=j_2$ and
$2\leq \sharp L(j_1,j_2)\leq L_0$.

Arbitrarily taking $\underline{q}\in \Upsilon_{N_0}(i,\gamma)$,
 $\underline{z}\in \Upsilon(i,j)$, $n\in \mathbb{N}$,  define
$$\underline{y}(\underline{q},\underline{z},n)
=(\cdots z_{-3}z_{-2}z_{-1}L(z_0,q_{-n})q_{-n+1}\cdots q_{-1};
\stackrel{0}{q_0} q_1\cdots q_{n-1}L(q_n,z_0)z_1z_2z_3 \cdots ).$$
Denote $N_1=2L_0+2N_0+1$.   For any $\theta>0$ we can take large $n$
satisfying $n>N_1$ and
$d(\underline{y}(\underline{q},\underline{z},n),\underline{q})<\theta$.
Define a new subset of $\Sigma_B$:
$$Y(\underline{q},n)=\{\underline{y}(\underline{q},\underline{z},n)\in \Sigma_B \mid
\underline{z}\in \Upsilon(i,j)\}.$$ Consider the positive and
negative constitutions of $\Upsilon(i,j)$ as follows
\begin{eqnarray*}
\Upsilon^{+}(i,j)&=&\{\underline{w}\in
\Sigma_B\mid\,w_k=z_k,\,\,i\geq
0,\quad \mbox{for some }\,\,\underline{z}\in \Upsilon(i,j)\}\\[2mm]
\Upsilon^{-}(i,j)&=&\{\underline{w}\in
\Sigma_B\mid\,w_k=z_k,\,\,i\leq 0,\quad \mbox{for some
}\,\,\underline{z}\in \Upsilon(i,j)\}.
\end{eqnarray*}
Clearly $\Upsilon^{+}(i,j)\supset \Upsilon(i,j)$,
$\Upsilon^{-}(i,j)\supset \Upsilon(i,j)$.  Then  by the Markov
property of $\mu_0$ it holds that
$$\mu_0(Y(\underline{q},n))\geq
\mu_0(\Upsilon^{-}(i,j))p_{jq_{-n}}p_{q_{-n}q_{-n+1}}\cdots
p_{q_{n-1}q_{n}}q_{q_{n}j}\,\mu_0(\Upsilon^{+}(i,j))>0.$$ Moreover,
for any $\underline{y}\in Y(\underline{q},n)$ and $k\geq1,\,\,s\in
\mathbb{Z}$ we have

Case 1: $-n-\sharp L\leq s\leq n+\sharp L$, $s+k-1\leq n+\sharp L$
it follows that
\begin{eqnarray*}\,\sharp \{s\leq t\leq s+k-1\mid
y_t=i\}&\leq&\,2L_0+\sharp \{s\leq t\leq s+k-1\mid q_t=i\} \\[2mm]&\leq&
2L_0+N_0+k(p_i+\gamma)+|s|\gamma.\end{eqnarray*}

Case 2:  $-n-L\leq s\leq n+L$, $s+k-1> n+L$ it follows that
\begin{eqnarray*}\,\sharp \{s\leq t\leq s+k-1\mid
y_t=i\}&\leq&\,L_0+N_0+(n+L-s)(p_i+\gamma)+s|\gamma|+N_0+\\[2mm]
&&+(s+k-1-n-L)(p_i+\gamma) \\[2mm]&\leq&
L_0+2N_0+k(p_i+\gamma)+|s|\gamma.\end{eqnarray*}

Case 3: $s>n+L$ it follows that
\begin{eqnarray*}\,\sharp \{s\leq t\leq s+k-1\mid
y_t=i\}&\leq & N_0+k(p_i+\gamma)+|s|\gamma.\end{eqnarray*}

Case 4: $s<-n-L$ it follows that

\begin{eqnarray*}\,\sharp \{s\leq t\leq s+k-1\mid
y_t=i\}&\leq&\,2L_0+2N_0+k(p_i+\gamma)+|s|\gamma.\end{eqnarray*} The
situation of $\sharp \{s\leq t\leq s+k-1\mid y_{-t}=i\}$ is similar.
Altogether, since $N_1=2L_0+2N_0+1$, $Y(\underline{q},n)\subset
\Upsilon_{N_1}(i,\gamma) $. The arbitrariness of $\theta$ gives rise
to that
$$\underline{q}\in \supp(\mu_0\mid \Upsilon_{N_1}(i,\gamma)).$$ That
is, $\Upsilon_{N_0}(i,\gamma)\subset
\widetilde{\Upsilon}_{N_1}(i,\gamma) $. Since
$m_0(\Upsilon_{N_0}(i,\gamma))>0$ so
$m_0(\widetilde{\Upsilon}_{N_1}(i,\gamma))>0$ which by the
ergodicity of $m_0$ implies $m_0(\widetilde{\Upsilon}(i,\gamma))=1$.

\hfill $\Box$

Noting that $\pi_0(\widetilde{\Upsilon}_{N}(i,\gamma))\subset
\widetilde{\Gamma}_{N}(i,\gamma)$, by Lemma \ref{measures on
support} we obtain
$$m(\widetilde{\Gamma}(i,\gamma))=m_0(\pi_0^{-1}(\widetilde{\Gamma}(i,\gamma)))\geq
m_0(\widetilde{\Upsilon}(i,\gamma))=1
$$ which concludes Proposition \ref{frequence}.

 \end{proof}

\subsection{Nonuniformly hyperbolic systems} We shall verify $\widetilde{\Lambda}$
for an example due to Katok \cite{Katok} (see also \cite{Ba-Pesin1,
Barr-Pesin}) of a diffeomorphism on the 2-torus $\mathbb{T}^2$ with
nonzero Lyapunov exponents, which is not an Anosov map.  Let $f_0$
be a hyperbolic linear automorphism given by the matrix

$$A=\begin{pmatrix} 2&1\\1&1
\end{pmatrix}
$$
Let $\mathcal{R}=\{R_1,R_2,\cdots,R_l\}$ be the Markov partition of
$f_0$ and $B=B(\mathcal{R})$ be the associated transition matrix.
$f_0$ has a maximal measure $\mu_1$.  Without loss of generality, at
most taking an iteration of $f_0$ we suppose there is a fixed point
$O\in \Int R_1$. Consider the disk $D_r$ centered at $O$ of radius
$r$. Let $(s_1, s_2)$ be the coordinates in $D_r$ obtained from the
eigendirections of $A$. The map $A$ is the time-1 map of the local
flow in $D_r$ generated by the system of ordinary differential
equations:
$$\frac{ds_1}{dt} = s_1 \log\lambda ,\quad \frac{ds_2}{dt} = -s_2 \log\lambda.$$ We obtain
the desired map by slowing down $A$ near the origin.

Fix small $r_1 < r_0$ and consider the time-1 map $g$ generated by
the system of ordinary differential equations in $D_{r_1}$ :

$$ \frac{ds_1}{dt} =
s_1\psi(s_1^2 + s_2^ 2) \log\lambda ,\quad \frac{ds_2}{dt}
=-s_2\psi(s_1^2 + s_2^2) \log\lambda  $$ where $\psi$ is a
real-valued function on $[0, 1]$ satisfying: \begin{enumerate}
\item[(1)] $\psi$ is a $C^{\infty}$ function except for
the origin $O$; \\
\item[(2)] $ \psi(0) = 0$ and $\psi(u) = 1$ for $u\geq r_0$ where $0 < r_0 < 1;$\\
\item[(3)] $ \psi(u) >
0$ for every $0<u<r_0$;\\
\item[(4)] $\int_0^1\frac{du}{\psi(u)}<\infty$.
\end{enumerate}
The map $f$, given as $f(x) = g(x)$ if $x\in D_{r_1}$ and $f(x) =
A(x)$ otherwise, defines a homeomorphism of the torus, which is a
$C^{\infty}$ diffeomorphism everywhere except for the origin $O$. To
provide the differentiability of the map $f$, the function $\psi$
must satisfy some extra conditions. Namely, near $O$ the integral
$\int_0^1du/\psi$ must converge ``very slowly". We refer the
smoothness to \cite{Katok}. Here $f$ is contained in the $C^0$
closure of Anosov diffeomorphisms and even more there is a
homeomorphism $\pi: \mathbb{T}^2 \rightarrow \mathbb{T}^2$ such that
$\pi\circ f_0=f\circ \pi$ and $\pi(O)=O$. By the constructions,
there is a continuous decomposition on the tangent space
$T\mathbb{T}^2=E^1\oplus E^2$ such that
 for any neighborhood $V$ of $O$, there exists
$\lambda_V>1$ such that
\begin{enumerate}
\item[(1)]$\|Df_x \mid_{ E^1(x)}\|\geq \lambda_{V}$, \,\,\, $\|Df_x \mid_{ E^2(x)}\|\leq
\lambda_{V}^{-1}$,\,\,\, $x\in \mathbb{T}^2\setminus V$ ;\\
\item[(2)] $\|Df_x \mid_{ E^1(x)}\|\geq 1$, \,\,\, $\|Df_x \mid_{ E^2(x)}\|\leq
1$,\,\,\, $x\in  V$.
\end{enumerate}
Let $H_i=\pi(R_i)$, $\nu_0=\pi_{*}\mu_1$ and $p_i=\nu_0(H_i)$. Then
$H_i$ is a closed subset of $\mathbb{T}^2$ with nonempty interior.
Let
\begin{eqnarray*}p_0&=&\frac12\min\{1-p_i\,\mid\,\, 1\leq i\leq
l\},\\[2mm]
\beta&=&(1-p_1-p_0-\gamma)\log\lambda_{V}.
\end{eqnarray*}

\begin{Thm} \label{Katok}There exists a
neighborhood $U$ of $\nu_0$ in $\mathcal{M}_{inv}(\mathbb{T}^2,f)$
such that for any ergodic $\nu\in U$ it holds that $\nu\in
\mathcal{M}_{inv}(\widetilde{\Lambda}(\beta,\beta, \epsilon))$ for
any $0\leq \epsilon\ll \beta$.

\end{Thm}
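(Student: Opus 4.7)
The plan is to transfer Proposition \ref{frequence} from the Anosov system $(M,f_0)$ to the Katok map $f$ through the topological conjugacy $\pi$, and then to use the resulting frequency control on visits to the Markov rectangle $R_1$ (which contains the slow-down neighborhood $V$ of the indifferent fixed point $O$) to verify the Pesin block conditions for every ergodic measure close to $\nu_0$. The neighborhood of $\nu_0$ that I will produce is $U:=\pi_*(U_0)$, where $U_0$ is the neighborhood of $\mu_1$ supplied by Proposition \ref{frequence}; since $\pi$ is a homeomorphism, $\pi_*$ is a weak$^*$ homeomorphism on invariant measures, and each ergodic $\nu\in U$ lifts to a unique ergodic $m=(\pi^{-1})_*\nu\in U_0$.

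The key choice is an auxiliary parameter $\gamma_0>0$ small enough that $\gamma_0\log\lambda_V\le\epsilon$ and $2\gamma_0<p_0+\gamma$, both of which hold simultaneously thanks to the buffer $p_0>0$ in the definition of $\beta$ and the assumption $\epsilon\ll\beta$. Applying Proposition \ref{frequence} with index $i=1$ and parameter $\gamma_0$, I obtain $U_0$ so that every ergodic $m\in U_0$ is supported on $\widetilde{\Gamma}(1,\gamma_0)$. Translating through $\pi$ and using $H_1=\pi(R_1)$, this yields, for $\nu$-a.e.\ $x$, an integer $N(x)$ with
\begin{equation*}
\sharp\{t\le j<t+n:f^j(x)\in H_1\}\le N(x)+n(p_1+\gamma_0)+|t|\gamma_0,\qquad\forall\,t\in\mathbb{Z},\,n\ge 1.
\end{equation*}

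Next I would verify Definition \ref{Def6} with $\beta_1=\beta_2=\beta$, $E^s_x=E^2(x)$, $E^u_x=E^1(x)$, and $k(x):=\lceil N(x)\log\lambda_V/\epsilon\rceil$. Shrinking $V$ if necessary so that $V\subset\Int H_1$, visits to $V$ are counted among visits to $H_1$, and the estimates $\|Df\mid E^2\|\le\lambda_V^{-1}$ off $V$, $\le 1$ on $V$ telescope to give
\begin{equation*}
\|Df^n|E^2(f^tx)\|\le \lambda_V^{N(x)+|t|\gamma_0}\,e^{-(1-p_1-\gamma_0)n\log\lambda_V},
\end{equation*}
which is condition (a) from the choices $\gamma_0\log\lambda_V\le\epsilon$ and $(1-p_1-\gamma_0)\log\lambda_V\ge\beta-\epsilon$. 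The backward estimate (b) on $E^1$ is analogous, but the window starts at $f^{t-n}x$ and produces a factor $\lambda_V^{|t-n|\gamma_0}\le\lambda_V^{|t|\gamma_0+n\gamma_0}$; the extra $n\gamma_0\log\lambda_V$ is absorbed into $\beta-\epsilon$ via the second clause $2\gamma_0<p_0+\gamma$. Condition (c) is automatic because the continuous splitting $E^1\oplus E^2$ over the compact torus has angles bounded uniformly away from zero.

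Finally I would upgrade $x\in\Lambda_{k(x)}$ to $x\in\widetilde{\Lambda}_{k(x)}$. Observe first that $\omega:=\nu_0$ is itself ergodic and hyperbolic with $\lambda^\pm(\nu_0)\ge(1-p_1)\log\lambda_V>\beta$ (apply the frequency computation to $\nu_0$ itself, where the frequency of $H_1$ equals $p_1$), so $\widetilde{\Lambda}(\beta,\beta,\epsilon)$ is well-defined using $\omega=\nu_0$. Since $\pi$ is a homeomorphism and $\nu_0=\pi_*\mu_1$, push-forward commutes with conditional support:
\begin{equation*}
\pi\bigl(\widetilde{\Gamma}_N(1,\gamma_0)\bigr)=\pi\bigl(\supp(\mu_1\mid_{\Gamma_N(1,\gamma_0)})\bigr)=\supp\bigl(\nu_0\mid_{\pi(\Gamma_N(1,\gamma_0))}\bigr).
\end{equation*}
The Pesin verification above applies to every $x\in\pi(\Gamma_N(1,\gamma_0))$ and gives $\pi(\Gamma_N(1,\gamma_0))\subset\Lambda_{k(N)}(\beta,\beta,\epsilon)$ with $k(N)=\lceil N\log\lambda_V/\epsilon\rceil$; conditional supports are monotone under inclusion of the base set, so $\pi(\widetilde{\Gamma}_N(1,\gamma_0))\subset\supp(\nu_0\mid_{\Lambda_{k(N)}})=\widetilde{\Lambda}_{k(N)}$. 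Taking the union over $N$ yields $\nu(\widetilde{\Lambda})=1$. The main technical obstacle is the careful bookkeeping of the $|t|$-dependence in conditions (a) and (b)---in particular the asymmetric backward window responsible for the extra $n\gamma_0\log\lambda_V$ error---which is exactly what the buffer $p_0$ in the definition of $\beta$ is designed to accommodate.
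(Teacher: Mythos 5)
Your proposal is correct and follows essentially the same route as the paper: transfer the visit-frequency control of Proposition \ref{frequence} through the conjugacy $\pi$, use the uniform hyperbolicity of $Df$ outside the slow-down neighborhood $V\subset H_1$ together with the frequency bound to verify the Pesin block conditions (with the buffer $p_0$ in $\beta$ absorbing the error terms), and track conditional supports so as to land in $\widetilde{\Lambda}$ rather than merely $\Lambda$. The only differences are cosmetic: the paper organizes the derivative estimate as a two-case analysis (short windows absorbed into the constant $e^{\epsilon k}$ versus long windows giving genuine contraction) and concludes via positive measure plus ergodicity, whereas you telescope in a single estimate and obtain $\nu(\widetilde{\Lambda})=1$ directly by taking the union over $N$.
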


\begin{proof}Take a small neighborhood $V\subset H_1$ of $O$.  Denote
\begin{eqnarray*}\Phi_N(i,\gamma)=\{x\in M\mid&&\sharp \{n\leq j\leq n+k-1\mid
f^j(x)\in H_i\}\leq N+k(p_i+\gamma)+|n|\gamma,\\[2mm]&&\sharp \{n\leq j\leq n+k-1\mid
f^{-j}(x)\in H_i\}\leq N+k(p_i+\gamma)+|n|\gamma\\[2mm]&&\quad \forall \,k\geq
1,\,\,\,\forall\, n\in \mathbb{Z\}}.\end{eqnarray*} Define
\begin{eqnarray*}\widetilde{\Phi}_N(1,\gamma)&=&\supp(\nu_0\mid
\Phi_N(1,\gamma)).
\end{eqnarray*} Then for some large $N$ we have $\nu_0(\widetilde{\Phi}_N(1,\gamma))>0$. Noting that $\mu_1(\partial R_1)=0$, by Proposition \ref{frequence}
and the conjugation $\pi$ there exists a neighborhood $U$ of $\nu_0$
in $\mathcal{M}(\mathbb{T}^2,f)$ such that for any ergodic $\nu\in
U$,
$$\nu(\widetilde{\Phi}_N(1,\gamma))>0.$$
 For any $x\in
\Phi_N(1,\gamma)$ and $k\geq 1$, $n\in \mathbb{Z}$ we have

Case 1: $k(p_1+\gamma+p_0)\leq N+k(p_1+\gamma)+|n|\gamma$, then
$$k\leq \frac{N+|n|\gamma}{p_0}.$$
So, \begin{eqnarray*}
\|Df^{-k} \mid_{ E^1(f^{n}x)}\|&\leq& e^{-k\beta}\exp(\frac{\beta}{p_0}(N+|n|\gamma)),\\[2mm]
\|Df^k_x
\mid_{ E^2(f^nx)}\|&\leq&
e^{-k\beta}\exp(\frac{\beta}{p_0}(N+|n|\gamma)).
\end{eqnarray*}

Case 2: $k(p_1+\gamma+p_0)> N+k(p_1+\gamma)+|n|\gamma$, then
\begin{eqnarray*}
\|Df^{-k} \mid_{ E^1(f^{n}x)}\|&\leq& \lambda_V^{-(1-p_1-p_0-\gamma)k}=e^{-\beta k},\\[2mm] \|Df^k_x
\mid_{ E^2(f^nx)}\|&\leq& \lambda_V^{-(1-p_1-p_0-\gamma)k}=e^{-\beta
k}.
\end{eqnarray*}
Let $N_2=[\frac{\beta N}{\gamma p_0}]+1$. Then
$$\Phi_N(1,\gamma)\subset \Lambda_{N_2}(\beta,\beta, \gamma)
\quad \mbox{and}\quad \widetilde{\Phi}_N(1,\gamma)\subset
\widetilde{\Lambda}_{N_2}(\beta,\beta, \gamma).$$
 Therefore,
$$\nu(\widetilde{\Lambda}_{N_2}(\beta,\beta, \gamma))>0$$
which completes the proof of Theorem \ref{Katok}.

\end{proof}

\subsection{Robustly transitive partially hyperbolic systems}
In \cite{Mane} R.~Ma{\~{n}}{\'{e}} constructed a class of robustly
transitive diffeomorphisms which is not hyperbolic. Firstly we
recall the description of Ma{\~{n}}{\'{e}}'s example. Let
$\mathbb{T}^n$, $n\geq 3$, be the torus $n$-dimentional and $f_0 :
\mathbb{T}^n \rightarrow \mathbb{T}^n$ be a (linear) Anosov
diffeomorphism. Assume that the tangent bundle of $\mathbb{T}^n$
admits the $Df_0$-invariant splitting $T\mathbb{T}^n = E^{ss} \oplus
E^u\oplus E^{uu}$, with $\dim E^u = 1$ and $$\lambda_s :=
\|Df\mid_{E^{ss}}\|,\quad \lambda_u:= \|Df\mid_{ E^u}\|,\quad
\lambda_{uu}:= \|Df\mid_{E^{uu}}\|$$ satisfying the relation
$$\lambda_s < 1 < \lambda_u < \lambda_{uu}.$$
The following Lemma is proved in \cite{Sambarino-Vasquez}.
\begin{Lem}\label{shadowing onto diffeo}
Let $f_0 : \mathbb{T}^n\rightarrow \mathbb{T}^n$  be a linear Anosov
map. Then there exists $C> 0$ such that for any small $r$ and any $f
:  \mathbb{T}^n\rightarrow \mathbb{T}^n$ with $\dist_{C^0}(f, g) <
r$ there exists $\pi : \mathbb{T}^n\rightarrow \mathbb{T}^n$
continuous and onto, $\dist_{C^0}(\pi, \id) < Cr$, and $$f_0\circ
\pi = \pi\circ f.$$
\end{Lem}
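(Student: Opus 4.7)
The statement is the classical $C^0$-semiconjugacy theorem (Walters) for self-maps of $\TT^n$ close to a linear Anosov $f_0$; the plan is a fixed-point argument on the universal cover. Let $A:\RR^n\to\RR^n$ be the linear lift of $f_0$, and choose a continuous lift $\tilde f:\RR^n\to\RR^n$ of $f$ so that $\phi:=\tilde f-A$ is $\ZZ^n$-periodic with $\|\phi\|_\infty\le r$ (reading the typo $\dist_{C^0}(f,g)<r$ as $\dist_{C^0}(f,f_0)<r$). Looking for a lift $\tilde\pi=\id+h$ with $h$ continuous, $\ZZ^n$-periodic and bounded, the identity $f_0\circ\pi=\pi\circ f$ lifts, by linearity of $A$, to the cohomological equation
$$A\,h-h\circ\tilde f=\phi. \quad (\ast)$$

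Next I would decompose $\RR^n=E^s\oplus E^u$ into the $A$-invariant hyperbolic splitting, with $A_s:=A|_{E^s}$ contracting and $A_u:=A|_{E^u}$ expanding, and project $(\ast)$ onto each factor. On the Banach space $X^u$ of bounded continuous $\ZZ^n$-periodic maps $\RR^n\to E^u$ with the sup norm, rewrite the unstable part as the fixed-point problem $h^u=T^u h^u$ where $T^u(g):=A_u^{-1}(g\circ\tilde f)+A_u^{-1}\phi^u$; since $\|A_u^{-1}\|<1$, $T^u$ is a contraction of that ratio and yields a unique fixed point with $\|h^u\|\le\|A_u^{-1}\|\,r/(1-\|A_u^{-1}\|)$. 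For the stable component, rewrite $(\ast)^s$ as $h^s\circ\tilde f=A_s h^s-\phi^s$; using that, for $r$ small, $\tilde f$ is a homeomorphism (automatic in the Ma\~n\'e/Sambarino-Vasquez setting where $f$ is a diffeomorphism), apply $\tilde f^{-1}$ to get $h^s=A_s(h^s\circ\tilde f^{-1})-\phi^s\circ\tilde f^{-1}$, a contraction on the analogous space $X^s$ of ratio $\|A_s\|<1$. Summing gives $h=h^s+h^u$ with $\|h\|_\infty\le Cr$ for a constant $C$ depending only on $A$.

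Descending to the torus produces a continuous $\pi:\TT^n\to\TT^n$ with $\dist_{C^0}(\pi,\id)<Cr$ and $f_0\circ\pi=\pi\circ f$. Surjectivity follows by a degree argument: $\pi$ is $C^0$-close to $\id_{\TT^n}$, hence homotopic to it, so $\deg\pi=\deg\id=1\neq0$, and any continuous self-map of a closed oriented manifold with nonzero degree is onto. The main obstacle I anticipate is the inversion step for the stable direction: if $f$ is not assumed invertible, the contraction on $X^s$ cannot be set up in this form. In that case I would replace the fixed-point argument by Walters' shadowing-plus-expansiveness construction, defining $\pi(x)$ as the unique point with $d(f_0^n(\pi(x)),f^n(x))<\epsilon$ for all $n\in\ZZ$ (existence from the shadowing property of the Anosov $f_0$ applied to the $r$-pseudo-orbit $\{f^n(x)\}$, uniqueness from expansiveness of $f_0$) and then verifying continuity, the semiconjugacy $f_0\pi=\pi f$ (by uniqueness applied to the shifted pseudo-orbit) and the estimate $\dist_{C^0}(\pi,\id)\le\epsilon\le Cr$ directly.
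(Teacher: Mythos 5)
Your proposal is correct. Note that the paper itself gives no proof of this lemma: it simply cites Sambarino--V\'asquez, so there is no internal argument to compare against. What you write is the standard Franks/Walters semiconjugacy argument, and both routes you sketch are sound: lifting to $\RR^n$, reducing $f_0\circ\pi=\pi\circ f$ with $\tilde\pi=\id+h$ to the cohomological equation $Ah-h\circ\tilde f=\phi$, and solving it by two contractions (ratio $\|A_u^{-1}\|$ on the unstable factor using forward composition, ratio $\|A_s\|$ on the stable factor using $\tilde f^{-1}$) is exactly how the constant $C$, depending only on $A$ and on the norms of the projections onto $E^s$ and $E^u$, is produced; the degree argument for surjectivity is also the standard one. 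You correctly identify the only genuine hypothesis hidden in the stable step, namely invertibility of $f$, which holds in the Ma\~n\'e and Bonatti--Viana applications where the lemma is used, and your fallback via shadowing of the $r$-pseudo-orbit $\{f^n(x)\}$ for $f_0$ plus expansiveness of $f_0$ is an acceptable alternative under the same hypothesis. The one cosmetic point worth fixing is that $\|\phi^s\|,\|\phi^u\|$ are bounded by $r$ only up to the norm of the (generally non-orthogonal) projections of the splitting, but this merely enlarges $C$ and does not affect the statement.
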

Let $\mathcal{R}=\{R_1,R_2,\cdots,R_l\}$ be the Markov partition of
$f_0$ and $B=B(\mathcal{R})$ be the associated transition matrix.
Let $\mu_1$ be the maximal measure of $(\mathbb{T}^n,f_0)$ and
$p_i=\mu_1(R_i)$ for $1\leq i\leq l$. Suppose there is a fixed point
$O\in \Int R_1$. Take  small $r$ satisfying the ball $B(O,Cr)\subset
R_1$ and $d(B(O,Cr), \partial R_1)>Cr$.  Then deform the Anosov
diffeomorphim $f$ inside $B(p, r)$ passing through a flip
bifurcation along the central unstable foliation $\mathcal{F}^u(p)$
and then we obtain three fixed points, two of them with stability
index equal to $\dim E^s$ and the other one with stability index
equal to $\dim E^s + 1$. Moreover take positive numbers $\delta,
\gamma \ll \min\{\lambda_s, \lambda_u\}$.   Let $f$ satisfy the
following $C^1$ open conditions:

\begin{enumerate}\item[(1)] $ \|Df\mid_{E^{ss}}\|< e^{\delta}\lambda_s,\quad
\|Df\mid_{E^{uu}}\|>e^{-\delta}\lambda_{uu}$;\\
\item[(2)] $ e^{-\delta}\lambda_u<\|Df\mid_{ E^u(x)}\|< e^{\delta}\lambda_u$
,\quad for $x\in \mathbb{T}^n\setminus B(O,r)$;\\
\item[(3)] $ e^{-\delta}<\|Df\mid_{ E^u(x)}\|< e^{\delta}\lambda_u$
,\quad for $x\in B(O,r)$.
\end{enumerate}
\begin{figure}[h]
\begin{center}
\begin{picture}(100,90)(-3,-3)

\put(0,0){\line(1,1){40}}

\put(30,0){\line(1,1){40}}

\put(60,0){\line(1,1){40}}

\put(-15,20){\line(1,0){130}}

\put(27,27){\vector(1,1){0}}

\put(30,30){\vector(1,1){0}}

\put(57,27){\vector(1,1){0}}

\put(60,30){\vector(1,1){0}}

\put(87,27){\vector(1,1){0}}

\put(90,30){\vector(1,1){0}}

\put(12,12){\vector(-1,-1){0}}

\put(9,9){\vector(-1,-1){0}}

\put(39,9){\vector(-1,-1){0}}

\put(42,12){\vector(-1,-1){0}}

\put(72,12){\vector(-1,-1){0}}

\put(69,9){\vector(-1,-1){0}}

\put(65,20){\vector(-1,0){0}}

\put(35,20){\vector(1,0){0}}

\put(100,20){\vector(1,0){0}}

\put(5,20){\vector(-1,0){0}}

\put(50,20){\line(0,1){35}}

\put(80,20){\line(0,1){35}}

\put(20,20){\line(0,1){35}}

\put(50,35){\vector(0,-1){0}}

\put(80,35){\vector(0,-1){0}}

\put(20,35){\vector(0,-1){0}}

\put(60,23){\makebox(0,0){\tiny $O$}}

\put(50,20){\circle*{2}}

\end{picture}

\end{center}
\caption{}
\end{figure}

As shown in \cite{Sambarino-Vasquez} for the obtained $f$ there
exists a unique maximal measure $\nu_0$ of $f$ with
$\pi_{*}\nu_0=\mu_1$. Let $H_i=\pi(R_i)$, $p_i=\pi_{*}\mu_1(H_i)$
and
\begin{eqnarray*}p_0&=&\frac12\min\{1-p_i\,\mid\,\, 1\leq i\leq
l\},\\[2mm]
\beta&=&(1-p_1-p_0-\gamma)\min\{-\log\lambda_{s}-\delta,\,\,\log\lambda_{u}-\delta\}.
\end{eqnarray*}
We can see $E^{uu}$ is uniformly contracted by at least
$e^{-\beta}$.
\begin{Thm} \label{Mane}There exist $0<\epsilon\ll1<\beta$ and a
neighborhood $U$ of $\nu_0$ in $\mathcal{M}_{inv}(\mathbb{T}^n,f)$
such that for any ergodic $\nu\in U$ it holds that $\nu\in
\mathcal{M}_{inv}(\widetilde{\Lambda}(\beta,\beta, \epsilon), f)$.
\end{Thm}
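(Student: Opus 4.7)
The plan is to follow the blueprint of the proof of Theorem~\ref{Katok}, replacing the role of the single fixed point $O$ (where hyperbolicity fails in Katok's example) by the small ball $V = B(O, r)$ (where the central direction $E^u$ may fail to be expanding in the Ma\~n\'e example). Setting $H_1 = \pi^{-1}(R_1)$, define by analogy with $\Gamma_N(i,\gamma)$ the frequency-controlled set
\begin{eqnarray*}
\Phi_N(1,\gamma) = \big{\{}x \in \mathbb{T}^n \mid && \sharp\{n \leq j \leq n+k-1 \mid f^j(x) \in H_1\} \leq N + k(p_1+\gamma) + |n|\gamma,\\[2mm]
&& \sharp\{n \leq j \leq n+k-1 \mid f^{-j}(x) \in H_1\} \leq N + k(p_1+\gamma) + |n|\gamma\\[2mm]
&&\quad \forall\, k \geq 1,\,\,\, \forall\, n \in \mathbb{Z}\big{\}},
\end{eqnarray*}
and $\widetilde{\Phi}_N(1,\gamma) = \supp(\nu_0|_{\Phi_N(1,\gamma)})$. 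Since $V \subset H_1$, any bound on orbit visits to $H_1$ is a fortiori a bound on visits to $V$, so the central loss of expansion is controlled by $\Phi_N$.

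The first step would be to transfer Proposition~\ref{frequence} from the Anosov data $(\mathbb{T}^n,f_0,\mu_1)$ to $(\mathbb{T}^n,f,\nu_0)$ through the semi-conjugacy $\pi$. By $\pi \circ f = f_0 \circ \pi$ one has $\Phi_N(1,\gamma) = \pi^{-1}(\Gamma_N(1,\gamma))$, hence $\nu(\Phi_N(1,\gamma)) = \pi_{*}\nu(\Gamma_N(1,\gamma))$ for every $\nu \in \mathcal{M}_{inv}(\mathbb{T}^n,f)$. Because $\pi_{*}\nu_0 = \mu_1$ and $\mu_1(\partial R_1) = 0$, the measure $\nu_0$ is a continuity point of $\nu \mapsto \nu(H_1)$, so weak$^{*}$-closeness gives a neighborhood $U$ of $\nu_0$ with $\pi_{*}\nu(R_1) < p_1 + \gamma/2$ for every $\nu \in U$. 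Lemma~\ref{full measure} applied to $\pi_{*}\nu$ then yields $\pi_{*}\nu(\Gamma(1,\gamma)) = 1$ and hence $\nu(\Phi_N(1,\gamma)) > 0$ for some $N$. The upgrade to $\nu(\widetilde{\Phi}_N(1,\gamma)) > 0$ for ergodic $\nu \in U$ would follow by reworking the support argument of Lemma~\ref{measures on support} for the deformed system $f$, using mixing of the underlying Anosov factor to show that $\supp(\nu_0 \mid \Phi_N)$ absorbs $\Phi_{N_0}$ at the cost of enlarging the parameter from $N_0$ to some $N_1$.

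The technical heart of the proof is verifying that $\Phi_N(1,\gamma) \subset \Lambda_{N_2}(\beta,\beta,\epsilon)$ for an appropriate $N_2 = N_2(N)$ and the stated $\beta$. I would use the splitting $E^s_x := E^{ss}_x$ and $E^u_{\mathrm{hyp},x} := E^u_x \oplus E^{uu}_x$, which is $Df$-invariant because the original dominated splitting is. Condition (c) of Definition~\ref{Def6} comes for free from the $C^0$-continuity and uniform transversality of the dominated splitting $E^{ss} \oplus E^u \oplus E^{uu}$, which yields a strictly positive angle independent of $x$ that is absorbed into $e^{-\epsilon k} e^{-\epsilon|t|}$ by taking $N_2$ large. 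Condition (a) is immediate from the $C^1$-open condition $\|Df|_{E^{ss}}\| < e^{\delta}\lambda_s$, which gives uniform contraction at rate at least $-\log\lambda_s - \delta \geq \beta$. For condition (b), the uniform expansion $\|Df|_{E^{uu}}\| > e^{-\delta}\lambda_{uu}$ handles $E^{uu}$, while on $E^u$ one uses $\|Df|_{E^u(x)}\| \geq e^{-\delta}$ for $x \in B(O,r)$ and $\|Df|_{E^u(x)}\| \geq e^{-\delta}\lambda_u$ otherwise, so the bound on visits to $H_1$ translates into a lower bound on $\|Df^k|_{E^u(f^n x)}\|$. I would mimic the two-case split of Theorem~\ref{Katok}: Case~1, $k(p_1+\gamma+p_0) \leq N + |n|\gamma$, which bounds $k$ by a constant times $N + |n|\gamma$ and is absorbed into $e^{\epsilon N_2} e^{\epsilon|n|}$; Case~2, the opposite, where the frequency estimate yields expansion of rate at least $\beta$. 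Setting $N_2 = [\beta N/(\gamma p_0)] + 1$ produces $\widetilde{\Phi}_N(1,\gamma) \subset \widetilde{\Lambda}_{N_2}(\beta,\beta,\epsilon)$; hence $\nu(\widetilde{\Lambda}) > 0$, and ergodicity of $\nu$ together with $f$-invariance of $\widetilde{\Lambda}$ upgrades this to $\nu(\widetilde{\Lambda}) = 1$.

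The main obstacle I expect is the support-upgrade in the first step. In Katok's setting $\pi$ is a genuine homeomorphism, so visits of $x$ to $H_1$ correspond bijectively to visits of $\pi(x)$ to $R_1$ and the entire support machinery of Proposition~\ref{frequence} transfers directly. Here $\pi$ is only a continuous surjection, so the proof of Lemma~\ref{measures on support} must be re-executed on the Ma\~n\'e side: one either needs an intrinsic analogue carried out in the symbolic dynamics of $f_0$ and then pushed through $\pi^{-1}$ (which requires $\pi^{-1}$ to behave well on $\widetilde{\Upsilon}_{N}$, essentially because $\pi$ is a small $C^0$-perturbation of the identity), or a direct shadowing construction for $f$ itself. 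Once this is handled, the rest of the argument is largely parallel to the Katok case.
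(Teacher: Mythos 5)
Your proposal follows the paper's proof in all essentials: the same frequency-controlled sets, the same two-case estimate (visits to the bad region either bounded by a multiple of $N+|n|\gamma$, absorbed into $e^{\epsilon N_2}e^{\epsilon|n|}$, or rare enough that the orbit picks up expansion rate $\beta$ along $E^u\oplus E^{uu}$), the same choice $N_2=[\beta N/(\gamma p_0)]+1$, and the same final upgrade by ergodicity. The only substantive divergence is the support step, and there the paper's argument is exactly your ``option (a)'' executed more directly than you anticipate. The paper never forms $\supp(\nu_0\mid_{\Phi_N})$ and never re-runs Lemma~\ref{measures on support} for $f$: it takes $U$ with $\pi_*U\subset U_1$, invokes Proposition~\ref{frequence} purely on the Anosov side to get $\pi_*\nu(\widetilde{\Gamma}_N(1,\gamma))>0$, and then the positive-measure subset of $\widetilde{\Lambda}_{N_2}$ it exhibits is the \emph{pullback} $\pi^{-1}(\widetilde{\Gamma}_N(1,\gamma))$, whose $\nu$-measure equals $\pi_*\nu(\widetilde{\Gamma}_N(1,\gamma))$ by definition of the pushforward. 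So the whole ``absorption'' machinery stays in the symbolic/Anosov world where it has already been done, and nothing new has to be proved about the dynamics of $f$ itself. (Your frequency set $\Phi_N(1,\gamma)=\pi^{-1}(\Gamma_N(1,\gamma))$ and the paper's $T_N(1,\gamma)$, defined via visits to $B(O,r)$, are interchangeable: $V=B(O,r)\subset\pi^{-1}(R_1)$ and $\pi^{-1}(\Gamma_N)\subset T_N$ both come from $\dist_{C^0}(\pi,\id)<Cr$ together with $d(B(O,Cr),\partial R_1)>Cr$.)

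That said, your instinct that the non-injectivity of $\pi$ is the delicate point is sound, because the one link in the paper's chain that is asserted rather than argued is the passage from $\pi^{-1}(\Gamma_N(1,\gamma))\subset\Lambda_{N_2}(\beta,\beta,\epsilon)$ to $\pi^{-1}(\widetilde{\Gamma}_N(1,\gamma))\subset\widetilde{\Lambda}_{N_2}(\beta,\beta,\epsilon)$: to show that every neighborhood of a point $z\in\pi^{-1}(\widetilde{\Gamma}_N)$ charges $\nu_0\mid_{\Lambda_{N_2}}$, one would like $\pi^{-1}(B(\pi(z),\rho))$ to shrink into that neighborhood, which fails when $\pi^{-1}(\pi(z))$ is a nondegenerate continuum. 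Closing this requires using that $\pi$ is injective on a set of full $\nu_0$-measure (a fact available for these derived-from-Anosov maximal measures), which is precisely the kind of input you flagged. So: same approach; your version leaves open a step that the paper closes by pulling back the Anosov-side support set rather than rebuilding a support set for $f$, at the cost of an inclusion of supports that itself needs the good behaviour of $\pi^{-1}$ you were worried about.
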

\begin{proof}
By Proposition \ref{frequence} we can take a neighborhood $U_1$ of
$\mu_1$ in $\mathcal{M}_{inv}(\mathbb{T}^n,f_0)$ such that every
ergodic $\mu\in U_1$ also belongs to $
\widetilde{\Gamma}(i,\gamma)$, where $\widetilde{\Gamma}(i,\gamma)$
is given by Proposition \ref{frequence}.  Since
 $\pi$ is continuous, there is a neighborhood $U$ of $\nu_0$ in $\mathcal{M}_{inv}(\mathbb{T}^n,f)$ such
 that $\pi_{*}U\subset U_1$. For $N\in \mathbb{N}, \gamma>0$, define
\begin{eqnarray*}
T_N(i,\gamma)=\{x\in M\mid&&\sharp \{n\leq j\leq n+k-1\mid
f^j(x)\in B(O,r)\}\leq N+k(p_i+\gamma)+|n|\gamma,\\[2mm]&&\sharp \{n\leq j\leq n+k-1\mid
f^{-j}(x)\in B(O,r)\}\leq N+k(p_i+\gamma)+|n|\gamma\\[2mm]&&\quad \forall \,k\geq
1,\,\,\,\forall\, n\in \mathbb{Z\}}.
\end{eqnarray*}
For large $N$ we have $\nu_0(T_N(i,\gamma))>0$ and let
$$\widetilde{T}_N(i,\gamma)=\supp(\nu_0\mid T_N(i,\gamma)).$$
 For any $z\in T_N(i,\gamma)$, $n\in \mathbb{Z}$,
$k\geq 1$ we have

Case 1: $k(p_1+\gamma+p_0)\leq N+k(p_1+\gamma)+|n|\gamma$, then
$$k\leq \frac{N+|n|\gamma}{p_0}.$$
So, \begin{eqnarray*} \|Df^{-k} \mid_{ E^{u}(x)\oplus
E^{uu}(f^{n}x)}\|&\leq&
e^{-k\beta}\exp(\frac{\beta}{p_0}(N+|n|\gamma)).
\end{eqnarray*}

Case 2: $k(p_1+\gamma+p_0)> N+k(p_1+\gamma)+|n|\gamma$, then
\begin{eqnarray*}
\|Df^{-k} \mid_{E^{u}(x)\oplus E^{uu}(f^{n}x)}\|\leq
(\lambda_ue^{\delta})^{-(1-p_1-p_0-\gamma)k} e^{\delta
k(p_1+\gamma+p_0)}\leq  e^{-\beta k}e^{\delta k(p_1+\gamma+p_0)}.
\end{eqnarray*}
Let $N_2=[\frac{\beta N}{\gamma p_0}]+1$, $\epsilon= \max\{\delta
(p_1+\gamma+p_0),\,\gamma\}$. Then
$$T_N(1,\gamma)\subset \Lambda_{N_2}(\beta,\beta, \epsilon)
\quad \mbox{and}\quad \widetilde{T}_N(1,\gamma)\subset
\widetilde{\Lambda}_{N_2}(\beta,\beta, \epsilon).$$
 For any $x\in
\Gamma_N(1,\gamma)$, $z\in \pi^{-1}(x)$, it holds that
$$d(f^i(z),f^i_0(x))=d(f^i(z),\, \pi (f^i(x)))<Cr$$ which implies
that if $f^i_0(x)\notin R_1$ then $f^i(z)\notin B(O,r)$ because
$d(B(O,r),\partial R_1)>Cr$. Thus
$$\pi^{-1}(\Gamma_N(1,\gamma))\subset T_N(1,\gamma)\subset
\Lambda_{N_2}(\beta , \beta, \epsilon)$$ which yields that
$$\pi^{-1}(\widetilde{\Gamma}_N(1,\gamma))\subset
\widetilde{\Lambda}_{N_2}(\beta, \beta, \epsilon).$$ For any ergodic
$\nu\in U$, $\pi_{*}\nu\in U_1$. So
$\pi_{*}\nu(\widetilde{\Gamma}_{N}(1,\gamma))>0$. We obtain
$$\nu(\widetilde{\Lambda}_{N_2}(\beta, \beta, \epsilon))
\geq
\nu(\pi^{-1}(\widetilde{\Gamma}_N(1,\gamma)))=\pi_{*}\nu(\widetilde{\Gamma}_{N}(1,\gamma))>0.$$
The ergodicity of $\nu$ concludes $\nu(\widetilde{\Lambda}(\beta,
\beta, \epsilon))=1$.

\end{proof}

\subsection{Robustly transitive systems which is not partially hyperbolic}
 In this subsection we will apply the structure of $\widetilde{\Lambda}$  to
a class of diffeomorphisms introduced by Bonatti-Viana. For our
requirements we need do some additional assumptions  on their
constants. The class $\mathcal{V}\subset\mathrm{Diff}(\mathbb{T}^n)$
under consideration consists of diffeomorphisms which are also
deformations of an Anosov diffeomorphism. To define $\mathcal{V}$,
let $f_0$ be a linear Anosov diffeomorphism of the $n$-dimensional
torus $\mathbb{T}^n$.  Let $\mathcal{R}=\{R_1,R_2,\cdots,R_l\}$ be
the Markov partition of $f_0$ and $B=B(\mathcal{R})$ be the
associated transition matrix. Let $\mu_1$ be the maximal measure of
$(\mathbb{T}^n,f_0)$ and $p_i=\mu_1(R_i)$ for $1\leq i\leq l$.
Suppose there is a fixed point $O\in \Int R_1$. Take  small $r$
satisfying the ball $B(O,Cr)\subset R_1$ and $d(B(O,Cr), \partial
R_1)>Cr$, where $C$ is given by Lemma \ref{shadowing onto diffeo}.

Denote by $TM = E_0^s\oplus E_0^u$ the hyperbolic splitting for
$f_0$ and let
 $$\lambda_s :=
\|Df\mid_{E^{s}_0}\|,\quad \lambda_u:= \|Df\mid_{ E^u_0}\|.$$ We
suppose that $f_0$ has at least one fixed point outside $V$. Fix
positive numbers $\delta, \gamma \ll \lambda : =\min\{\lambda_s,
\lambda_u\}$. Let
\begin{eqnarray*}p_0&=&\frac12\min\{1-p_i\,\mid\,\, 1\leq i\leq
l\},\\[2mm]
\beta&=&(1-p_1-p_0-\gamma)\log\lambda-\delta,.
\end{eqnarray*}
By definition $f\in \mathcal{V}$ if it satisfies the following $C^1$
open conditions:

(1) There exist small continuous cone fields $C^{cu}$ and $C^{cs}$
invariant for $Df$ and $Df^{-1}$ containing respectively $E_0^u$ and
$E_0^s$.

(2) $f$ is $C^1$ close to $f_0$ in the complement of $B(O,r)$, so
that for $x\in \mathbb{T}^n\setminus B(O,r)$:
$$\|(Df|T_xD^{cu})\|>e^{-\delta}\lambda\,\,\,\mbox{and}\,\,\,\|Df|T_xD^{cs}\|<e^{\delta}\lambda^{-1}.$$

(3) For $x\in B(O,r)$:
$$\|(Df|T_xD^{cu})\|>e^{-\delta}\,\,\, \mbox{and}\,\,\, \|(Df|T_xD^{cs}\|<e^{\delta},$$
where $D^{cu}$ and $D^{cs}$ are disks tangent to $C^{cu}$ and
$C^{cs}$.

Immediately by the cone property, we can get a dominated splitting
$T\mathbb{T}^n=E\oplus F$ with $E\subset D^{cs}$ and $F\subset
D^{cu}$.

  Use Lemma
\ref{shadowing onto diffeo} there exists $\pi :
\mathbb{T}^n\rightarrow \mathbb{T}^n$ continuous and onto,
$\dist_{C^0}(\pi, \id) < Cr$, and
$$f_0\circ \pi = \pi\circ f.$$
In \cite{Buzzi-Fisher} for the obtained $f$,  Buzzi and Fisher
proved that there exists a unique maximal measure $\nu_0$ of $f$
with $\pi_{*}\nu_0=\mu_1$. This measure $\nu_0$ conforms good
structure of Pesin set $\widetilde{\Lambda}$ by the following
Theorem.
\begin{Thm} \label{Bonatti-Viana}There exist $0<\epsilon\ll1<\beta$ and a
neighborhood $U$ of $\nu_0$ in $\mathcal{M}_{inv}(\mathbb{T}^n,f)$
such that for any ergodic $\nu\in U$ it holds that $\nu\in
\mathcal{M}_{inv}(\widetilde{\Lambda}(\beta,\beta, \epsilon))$.
\end{Thm}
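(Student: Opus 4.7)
The plan is to mimic the proof of Theorem \ref{Mane} almost verbatim, using the semiconjugacy $\pi$ to pull back frequency control from the linear Anosov base $f_0$ to the deformation $f$. The only genuinely new point is that the hyperbolicity estimates must now be read off the dominated splitting $T\mathbb{T}^n=E\oplus F$ obtained from the invariant cone fields, rather than off a partially hyperbolic splitting. First I would invoke Proposition \ref{frequence} applied to $f_0$ and its Markov partition $\mathcal{R}$ to get a neighborhood $U_1$ of $\mu_1$ in $\mathcal{M}_{inv}(\mathbb{T}^n,f_0)$ such that every ergodic $\mu\in U_1$ lies in $\mathcal{M}_{inv}(\widetilde{\Gamma}(1,\gamma),f_0)$. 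Since $\pi_*:\mathcal{M}_{inv}(\mathbb{T}^n,f)\to\mathcal{M}_{inv}(\mathbb{T}^n,f_0)$ is continuous in the weak$^*$ topology and $\pi_*\nu_0=\mu_1$, one can pick a neighborhood $U$ of $\nu_0$ with $\pi_*U\subset U_1$.

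Second, in analogy with the Mañé case I would define for $N\in\mathbb{N}$
\begin{equation*}
T_N(1,\gamma)=\{z\in\mathbb{T}^n\mid \sharp\{n\le j\le n+k-1: f^{\pm j}(z)\in B(O,r)\}\le N+k(p_1+\gamma)+|n|\gamma,\ \forall k\ge 1,\ n\in\mathbb{Z}\},
\end{equation*}
and $\widetilde{T}_N(1,\gamma)=\supp(\nu_0\mid T_N(1,\gamma))$, with $\nu_0(T_N(1,\gamma))>0$ for large $N$. The geometric input $\dist_{C^0}(\pi,\id)<Cr$ combined with $d(B(O,Cr),\partial R_1)>Cr$ forces any $f$-orbit visiting $B(O,r)$ to have its $\pi$-image visiting $R_1$, hence $\pi^{-1}(\Gamma_N(1,\gamma))\subset T_N(1,\gamma)$ and $\pi^{-1}(\widetilde{\Gamma}_N(1,\gamma))\subset\widetilde{T}_N(1,\gamma)$.

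Third, I would translate the frequency bound into membership in a Pesin block. For $z\in T_N(1,\gamma)$, $k\ge 1$, $n\in\mathbb{Z}$, one splits as in Theorem \ref{Mane} into the two cases $k(p_1+\gamma+p_0)\le N+k(p_1+\gamma)+|n|\gamma$ (giving $k\le(N+|n|\gamma)/p_0$ and a trivial bound from cone conditions (2)--(3) of at most $e^{-\beta k}\exp(\tfrac{\beta}{p_0}(N+|n|\gamma))$ on $\|Df^k|E(f^n z)\|$ and on $\|Df^{-k}|F(f^n z)\|$) and the complementary case, where at most $k(p_1+\gamma)+N+|n|\gamma$ iterates visit $B(O,r)$ (norm bounds only $e^{\pm\delta}$) while the remaining iterates outside are contracted/expanded at rate $e^{-\delta}\lambda$, producing bounds of type $e^{-\beta k}e^{\delta k(p_1+\gamma+p_0)}$. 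Setting $\epsilon:=\max\{\delta(p_1+\gamma+p_0),\gamma\}$ and $N_2=[\beta N/(\gamma p_0)]+1$ yields
\begin{equation*}
T_N(1,\gamma)\subset\Lambda_{N_2}(\beta,\beta,\epsilon),\qquad \widetilde{T}_N(1,\gamma)\subset\widetilde{\Lambda}_{N_2}(\beta,\beta,\epsilon),
\end{equation*}
the angle requirement (c) of Definition \ref{Def6} being automatic since the splitting $E\oplus F$ is dominated, hence the angle is uniformly bounded below. For ergodic $\nu\in U$, $\pi_*\nu\in U_1$ gives $\pi_*\nu(\widetilde{\Gamma}_N(1,\gamma))>0$ for some $N$, and so
\begin{equation*}
\nu(\widetilde{\Lambda}_{N_2}(\beta,\beta,\epsilon))\ge\nu(\pi^{-1}(\widetilde{\Gamma}_N(1,\gamma)))=\pi_*\nu(\widetilde{\Gamma}_N(1,\gamma))>0,
\end{equation*}
which by ergodicity of $\nu$ upgrades to $\nu(\widetilde{\Lambda}(\beta,\beta,\epsilon))=1$.

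The principal obstacle, compared with Theorem \ref{Mane}, is the absence of any a priori uniformly hyperbolic sub-bundle: in the Mañé construction $E^{ss}\oplus E^{uu}$ is uniformly hyperbolic and only the one-dimensional center $E^u$ degenerates inside $B(O,r)$, so a single weak direction is controlled by the frequency of returns. Here both $E$ and $F$ only satisfy $e^{\pm\delta}$-bounds inside $B(O,r)$, so one must simultaneously control \emph{both} bundles by the same frequency estimate and verify that cone conditions (2)--(3) of the defining set $\mathcal V$ genuinely deliver contraction on $E$ and expansion on $F$ at rate $e^{-\delta}\lambda$ outside $B(O,r)$ (rather than just on the linearized bundles $E^s_0,E^u_0$ of $f_0$). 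Once this is confirmed the computation reduces to the Mañé template.
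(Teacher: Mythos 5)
Your proposal follows the paper's own proof of Theorem \ref{Bonatti-Viana} essentially verbatim: the paper likewise reduces to the template of Theorem \ref{Mane} via Proposition \ref{frequence}, the semiconjugacy $\pi$ with $\dist_{C^0}(\pi,\id)<Cr$, the frequency sets $T_N(1,\gamma)$ with the inclusion $\pi^{-1}(\Gamma_N(1,\gamma))\subset T_N(1,\gamma)\subset\Lambda_{N_2}(\beta,\beta,\epsilon)$ for $\epsilon=\max\{\delta(p_1+\gamma+p_0),\gamma\}$ and $N_2=[\beta N/(\gamma p_0)]+1$, and the final upgrade by ergodicity. Your closing remarks (that here both bundles $E$ and $F$ must be controlled simultaneously by the return frequency, and that the angle condition follows from domination) are correct and in fact supply detail the paper leaves implicit.
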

\begin{proof} The arguments are analogous of Theorem \ref{Mane}.
Choose a neighborhood $U_1$ of $\mu_1$ in
$\mathcal{M}(\mathbb{T}^n,f_0)$ such that every ergodic $\mu\in U_1$
is contained in  $ \widetilde{\Gamma}(i,\gamma)$, where
$\widetilde{\Gamma}(i,\gamma)$ defined as Proposition
\ref{frequence}. The continuity of $\pi$ give rise to a neighborhood
$U$ of $\nu_0$ in $\mathcal{M}(\mathbb{T}^n,f)$ such
 that $\pi_{*}U\subset U_1$.
For $N\in \mathbb{N}, \gamma>0$, define
\begin{eqnarray*}
T_N(i,\gamma)=\{x\in M\mid&&\sharp \{n\leq j\leq n+k-1\mid
f^j(x)\in B(O,r)\}\leq N+k(p_i+\gamma)+|n|\gamma,\\[2mm]&&\sharp \{n\leq j\leq n+k-1\mid
f^{-j}(x)\in B(O,r)\}\leq N+k(p_i+\gamma)+|n|\gamma\\[2mm]&&\quad \forall \,k\geq
1,\,\,\,\forall\, n\in \mathbb{Z\}}.
\end{eqnarray*}
For large $N$ we have $\nu_0(T_N(i,\gamma))>0$ and let
$$\widetilde{T}_N(i,\gamma)=\supp(\nu_0\mid T_N(i,\gamma)).$$
Let $N_2=[\frac{\beta N}{\gamma p_0}]+1$, $\epsilon= \max\{\delta
(p_1+\gamma+p_0),\,\gamma\}$.  Then
$$T_N(1,\gamma)\subset \Lambda_{N_2}(\beta,\beta, \epsilon)
\quad \mbox{and}\quad T_N(1,\gamma)\subset
\widetilde{\Lambda}_{N_2}(\beta,\beta, \epsilon).$$
 For any $x\in
\Gamma_N(1,\gamma)$, $z\in \pi^{-1}(x)$, it holds that
$$d(f^i(z),f^i_0(x))=d(f^i(z),\, \pi (f^i(x)))<Cr$$ which implies
that if $f^i_0(x)\notin R_1$ then $f^i(z)\notin B(O,r)$ because
$d(B(O,r),\partial R_1)>Cr$. Thus
$$\pi^{-1}(\Gamma_N(1,\gamma))\subset T_N(1,\gamma)\subset
\Lambda_{N_2}(\beta, \beta, \epsilon)$$ which yields that
$$\pi^{-1}(\widetilde{\Gamma}_N(1,\gamma))\subset
\widetilde{\Lambda}_{N_2}(\beta, \beta, \epsilon).$$ For any ergodic
$\nu\in U$, $\pi_{*}\nu\in U_1$. So
$\pi_{*}\nu(\widetilde{\Gamma}_{N}(1,\gamma))>0$. We obtain
$$\nu(\widetilde{\Lambda}_{N_2}(\beta, \beta, \epsilon))
\geq
\nu(\pi^{-1}(\widetilde{\Gamma}_{N}(1,\gamma)))=\pi_{*}\nu(\widetilde{\Gamma}_{N}(1,\gamma))>0.$$
Once more, the ergodicity of $\nu_0$ concludes
$\nu(\widetilde{\Lambda}(\lambda_1, \lambda_1, \epsilon))=1$.

\end{proof}

\noindent{\it Acknowledgement. } The authors thank very much to the
whole seminar of dynamical systems in Peking University. The
manuscript  was improved according to  their many  suggestions.

\end{document}